\newtheorem{definition}{Definition}
\newtheorem{lemma}{Lemma}
\newtheorem{theorem}{Theorem}
\newtheorem{corollary}{Corollary}
\newtheorem{example}{Example}
\newtheorem{proposition}{Proposition}
\newtheorem{remark}{Remark}
\newtheorem{assumption}{Assumption}
\DeclareMathOperator*{\esssup}{ess\,sup}
\def\BibTeX{{\rm B\kern-.05em{\sc i\kern-.025em b}\kern-.08em
    T\kern-.1667em\lower.7ex\hbox{E}\kern-.125emX}}
\begin{document}
\title{Detectability, Riccati Equations, and the Game-Based Control of Discrete-Time MJLSs with the Markov Chain on a Borel Space
}
\author{Chunjie Xiao, Ting Hou, Weihai Zhang, \IEEEmembership{Senior Member, IEEE}, and Feiqi Deng, \IEEEmembership{Senior Member, IEEE}
\thanks{Chunjie Xiao and Ting Hou (corresponding author) are with the School of Mathematics and Statistics, Shandong Normal University, Jinan 250014, China (e-mail: xiaocj6\_sd@163.com; ht\_math@sina.com).}
\thanks{Weihai Zhang is with the College of Electrical Engineering and Automation, Shandong University of Science and Technology, Qingdao 266590, China
(e-mail: w\_hzhang@163.com).}
\thanks{Feiqi Deng is with the School of Automation Science and Engineering, South China University of Technology, Guangzhou 510640, China (e-mail: aufqdeng@scut.edu.cn).}}

\maketitle

\begin{abstract}
   In this paper, detectability is first put forward for discrete-time Markov jump linear systems
   with the Markov chain on a Borel space ($\Theta$, $\mathcal{B}(\Theta)$).
   Under the assumption that the unforced system is detectable,
   a stability criterion is established relying on the existence of the positive semi-definite solution to the generalized Lyapunov equation.
   It plays a key role in seeking the conditions that guarantee the existence and uniqueness
   of the maximal solution and the stabilizing solution for a class of general coupled algebraic Riccati equations (coupled-AREs).
   Then the nonzero-sum game-based control problem is tackled,
   and Nash equilibrium strategies are achieved by solving four integral coupled-AREs.
   As an application of the Nash game approach, the infinite horizon mixed $H_{2}/H_{\infty}$ control problem is studied,
   along with its solvability conditions.
   These works unify and generalize those set up in the case
   where the state space of the Markov chain is restricted to a finite or countably infinite set.
   Finally, some examples are included to validate the developed results, involving a practical example of the solar thermal receiver.
\sloppy{}
\end{abstract}

\begin{IEEEkeywords}
Borel space, detectability, Markov jump systems, Riccati equations, the game-based control.
\end{IEEEkeywords}

\section{Introduction}
\label{sec:introduction}
\IEEEPARstart{S}{ome} dynamic systems, due to their innate vulnerability,
frequently undergo abrupt structural changes triggered by sudden phenomena
like random environmental disturbances, component or interconnection failures, or repairs.
Markov jump linear systems (MJLSs) have actually been shown to play an appropriate role in modeling this kind of dynamic systems.
More formally, since it consists of a set of subsystems,
MJLS enables the simulation of different scenarios that may occur during the dynamical evolution.
And, the stochastic switching governed by the Markov chain can precisely describe the random behavior exhibited by the simulated system.
Taking a solar thermal receiver as an example,
it was modeled as a discrete-time MJLS in \cite{BookCosta2005}
by using a two-state Markov chain to simulate sunny and cloudy weather conditions.
Over the past few decades, MJLSs have been extensively studied
\cite{EFCosta2005SIAM, Aberkane2015SIAM,  Aberkane2020TAC, Dragan2022TAC}
and many significant results on analysis and controller design have been yielded,
which can be applied in the fields
such as economics, energy \cite{Vargas2013}, communication \cite{Ouyang2021},
and robotic manipulation systems \cite{Siqueira2009}.

Nash game was originally proposed to address issues in economics,
specifically for analyzing strategic interactions and decision-making processes among multiple players.
In 1994, the two-player Nash game approach was first applied to
solve the mixed $H_{2}/H_{\infty}$ control problem for deterministic systems, see \cite{Limebeer1994h2hinfNash}.
Subsequently, related studies were carried out to explore the finite- or infinite horizon $H_{2}/H_{\infty}$ control problems
within the MJLSs framework,
including the continuous-time case \cite{HuangZhang2008}, and the discrete-time case \cite{Hou2013JGO}.
In these works, Nash equilibrium strategies were expressed in the form of linear state feedbacks,
with gain matrices obtained by solving coupled Riccati equations.
As for the difference between the finite- and infinite horizon scenarios,
the former focuses solely on optimizing the performance indices,
whereas the latter also requires to ensure the stability of the system dynamics.
That is, the solution to coupled algebraic Riccati equations (coupled-AREs) in the infinite horizon scenario
needs to be a stabilizing one.
It was generally recognized that detectability might serve as an important role
in guaranteeing the existence of the stabilizing solution to coupled-AREs \cite{BookCosta2005, BookDragan2010}.

Notice that the aforementioned works were largely centred on MJLSs with the Markov chain taking values in a finite or countably infinite set.
As a matter of fact, the differences of the state space may cause different properties of MJLSs.
In recent years, efforts have been made to explore MJLSs with the Markov chain on a general
(not necessarily countable) state space such as a Borel space.
References such as \cite{Meynsp2009, Li2012, Costa2014, Costa2015, Costa2016, Costa2017} can be consulted.
Among these works, \cite{Meynsp2009}, \cite{Li2012}, and \cite{Costa2014} conducted the stability analysis,
\cite{Costa2015} and \cite{Costa2016} treated some optimal control problems,
and the filtering design was undertaken in \cite{Costa2017}.
It should be stressed that this kind of MJLSs have great potential for practical applications.
For instance, in \cite{Masashi2018}, a Markov chain on a Borel space was used to
simulate continuous-valued random delays that existed in communication.
Take the solar thermal receiver as another example.
In Example \ref{1468Solarthermal} of this paper,
the solar thermal receiver is modeled by a discrete-time MJLS $x(k+1)=a(\vartheta(k))x(k)$
with the Markov chain $\{\vartheta(k), k\in\mathbb{N}\}$ on a Borel space.
The strength of $\vartheta(k)=(i,t)$, $k\in\mathbb{N}$ is that,
it not only considers the weather conditions of sunny and cloudy (sunny represented by $i=1$; cloudy represented by $i=2$),
but also provides a precise description of the instantaneous solar radiation effect
on the system parameters under the sunny/cloudy weather condition (represented by the variable $t\in [0,1]$).

In this paper, we place our attention on detectability and the game-based control
of discrete-time MJLSs with the Markov chain on a Borel space.
The first thing to notice is that the Markov chain on this general space,
its state space is no longer restricted to a discrete set,
and its statistical characteristics are determined by the initial distribution and the stochastic kernel.
Therefore, it differs from the countable framework by using integral operators rather than the summation operations used previously.
This brings technical difficulties.
Furthermore, the results obtained in this paper heavily rely on solving equations such as the generalized Lyapunov equations and coupled-AREs.
Significant challenges include ensuring boundedness (which is apparent in the finite-dimensional case, see \cite{BookDragan2010}),
measurability (which is immediate in the countable case),
and integrability (which corresponds to summability in the countable case) of the solutions to the associated equations.
Thus, system analysis and the game-based controller design completed in this paper are not trivial.

The main contributions of this paper include the following:
\begin{itemize}
\item
      It has been shown in Theorem \ref{Mthe1495if} that under the detectability hypothesis, the existence of a positive semi-definite solution to the generalized Lyapunov equation ensures the autonomous system being exponentially mean-square stable (EMSS).
      This result can be viewed as a generalization of the classical Barbashin-Krasovskii type stability criterion
      (see \cite{BookDragan2014} for continuous-time finite MJLSs), while the observability hypothesis has been weakened to detectability.
      Actually, the condition proposed in Theorem \ref{Mthe1495if} is not only sufficient but also necessary for EMSS.
      The proof heavily relies on the equivalence between EMSS and exponential mean-square stability with conditioning (EMSS-C)
      established in Theorem \ref{emss-emss-c}.

\item A class of general algebraic Riccati equations coupled via an integral has been dealt with,
      which is closely connected with several control problems like the standard LQ optimization and the $H_{\infty}$ control.
      The existence conditions for the maximal/stabilizing solution are presented.
      Moreover, it has been demonstrated that stabilizability and detectability can guarantee that the solution to coupled-AREs
      associated with the standard LQ optimization is unique and stabilizing.
      In establishing these results, the stability criterion proposed in Theorem \ref{Mthe1495if} plays a crucial part.

\item In Theorem \ref{gameth}, under the assumption that the considered system is detectable,
      a two-player nonzero-sum game has been settled and Nash equilibrium strategies have been characterized
      through the stabilizing solutions of a set of coupled-AREs.
      As a by-product, the infinite horizon $H_{2}/H_{\infty}$ control problem has been addressed
      jointly by Theorem \ref{gameth} and the bounded real lemma (BRL) built in \cite{Xiao2023}.
      In Example \ref{1468comparison}, the superiority of the mixed $H_{2}/H_{\infty}$ control is exhibited by compared with
      the $H_{\infty}$ control.
\end{itemize}

The remainder of this paper is organized as follows:
Section \ref{S2:Preliminaties} includes two parts.
Specifically, Subsection \ref{sub1} contains some preliminaries, and Subsection \ref{subsec2} is a review of the results on exponential stability.
The definition of detectability is given in Section \ref{Detectability}, followed by
a Barbashin-Krasovskii type stability criterion under the detectability hypothesis.
Discussions on the existence of the maximal/stabilizing solution to coupled-AREs are involved in Section \ref{maxstaGCARE}.
Section \ref{Gamecontrol} addresses the infinite horizon controller design problems thoroughly,
and some illustrative examples are provided in Section \ref{anexample}.
Section \ref{Conclusions} concludes the paper with a summary.

\section{Preliminaries, the Review of Stability, and the Decomposition of Matrix-Valued Functions}\label{S2:Preliminaties}
\subsection{Preliminaries}\label{sub1}
\emph{Notations}:
$\mathbb{N}^{+}:=\{1,2,\cdots\}$; $\mathbb{N}:=\{0\}\bigcup\mathbb{N}^{+}$.
For integers $k_{1}\leq k_{2}$, $\overline{k_{1},k_2}:=\{k_{1},k_{1}+1,k_{1}+2,\cdots,k_{2}\}$.
$\emptyset$ stands for the null set.
$\mathbb{R}^{n}$ denotes the $n$-dimensional real Euclidean space.
$\|\cdot\|$ is the Euclidean norm or operator norm.
For Banach spaces $\mathcal{H}$ and $\bar{\mathcal{H}}$, $\mathbf{B}(\mathcal{H},\bar{\mathcal{H}})$
represents the Banach space of all bounded linear operators from $\mathcal{H}$ to $\bar{\mathcal{H}}$,
and the norm of $\mathcal{L}\in\mathbf{B}(\mathcal{H},\bar{\mathcal{H}})$
is defined as $\|\mathcal{L}\|:=\sup_{x\in\mathcal{H},\ \|x\|=1}\{{\|\mathcal{L}x\|}$\}.
For brevity, $\mathbf{B}(\mathcal{H}):=\mathbf{B}(\mathcal{H},\mathcal{H})$.
For $\mathcal{L} \in \mathbf{B}(\mathcal{H})$, the spectral radius of $\mathcal{L}$ is defined as
$r_{\sigma}(\mathcal{L}):=\sup_{\lambda\in{\sigma(\mathcal{L})}}|\lambda|$,
where $\sigma(\mathcal{L})$ is the spectrum of $\mathcal{L}$.
A convex cone $\mathcal{H}^{+}\subseteq\mathcal{H}$ induces an ordering ``$\leq$'' on $\mathcal{H}$
by $U\leq \bar{U}$ if and only if (iff) $\bar{U}-U\in \mathcal{H}^{+}$, where $\bar{U},\ U\in\mathcal{H}$.
$\mathcal{L} \in \mathbf{B}(\mathcal{H})$ is called a positive operator
and denoted by $\mathcal{L}\geq 0$,
if $\mathcal{L}\mathcal{H}^{+}\subseteq\mathcal{H}^{+}$.
In particular, $\mathbb{R}^{n\times m}:=\mathbf{B}(\mathbb{R}^{m},\mathbb{R}^{n})$
represents the space composed of $n \times m$ dimensional real matrices with the norm $\|M\|:=[\lambda_{max}(M^{T}M)]^{\frac{1}{2}}$,
where $(\cdot)^{T}$ and $\lambda_{max}(\cdot)$ are the transpose and the maximum eigenvalue of a matrix.
$I$ represents the identity matrix with appropriate dimension.
$\mathbb{S}^{n}:=\{M\in{\mathbb{R}^{n\times n}}|M^{T}=M\}$;
$\mathbb{S}^{n+}:=\{M\in{\mathbb{S}^{n}}|M\geq 0\}$.
$\mathbf{E}\{\cdot\}$ stands for the mathematical expectation.
Throughout the paper, $n$ is a fixed positive integer representing the dimension of the system state under consideration.

In this paper, $\Theta$ is assumed to be a Borel subset of a Polish space
(i.e., a separable and complete metric space).
The Borel space $(\Theta, \mathcal{B}(\Theta))$ is defined as $\Theta$,
together with its Borel $\sigma$-algebra $\mathcal{B}(\Theta)$.
$\mu$ is a $\sigma$-finite measure on $\mathcal{B}(\Theta)$.

On a probability space $(\Omega, \mathfrak{F}, \mathbb{P})$,
define a Markov chain $\{\vartheta(k), k\in\mathbb{N}\}$ taking values in $\Theta$
with the initial distribution given by a probability measure $\mu_{0}$
and the stochastic kernel $\mathbb{G}(\cdot,\cdot)$ satisfying
$\mathbb{G}(\vartheta(k),\Lambda):=\mathbb{P}(\vartheta(k+1)\in \Lambda|\vartheta(k))$
almost surely $(a.s.),$  $ \forall k\in\mathbb{N}$ and $\Lambda\in\mathcal{B}(\Theta)$.
Suppose that for any $\ell\in\Theta$,
$\mathbb{G}(\ell,\cdot)$ has a density $g(\ell,\cdot)$ with respect to (w.r.t.) $\mu$,
that is, $\mathbb{G}(\ell,\Lambda)=\int_{\Lambda}g(\ell,t)\mu(dt)$ for any $\Lambda\in\mathcal{B}(\Theta).$
Note that the assumption on the Markov chain is reasonable when $\Theta$ is Borel.
For further details, please refer to \cite{Xiao2023}.

Let $\mathcal{H}^{n\times m}$ ($\mathcal{SH}^{n}$) denote the space of measurable matrix-valued functions
$Q(\cdot): \Theta\rightarrow \mathbb{R}^{n\times m}$ ($Q(\cdot): \Theta\rightarrow \mathbb{S}^{n}$).
$\mathcal{H}^{n\times m}_{1}=\big\{Q\in\mathcal{H}^{n\times m}\big| \|Q\|_{1}:=\int_{\Theta}\|Q(\ell)\|\mu(d\ell)<\infty\big\}$;
$\mathcal{H}^{n\times m}_{\infty}=\big\{Q\in\mathcal{H}^{n\times m}\big| \|Q\|_{\infty}:=\esssup\{\|Q(\ell)\|, \ell\in{\Theta}\}<\infty\big\}$.
$(\mathcal{H}_{1}^{n\times m},\|\cdot\|_{1})$ and $(\mathcal{H}^{n\times m}_{\infty},\|\cdot\|_{\infty})$ are Banach spaces (see \cite{Costa2014}).
$\mathcal{SH}^{n}_{1}=\big\{Q\in\mathcal{SH}^{n}\big| \|Q\|_{1}<\infty\big\}$;
$\mathcal{SH}^{n}_{\infty}=\big\{Q\in\mathcal{SH}^{n}\big| \|Q\|_{\infty}<\infty\big\}$.
$(\mathcal{SH}_{1}^{n},\|\cdot\|_{1})$ is an ordered Banach space
with the order induced by the convex cone $\mathcal{H}_{1}^{n+}=\big\{Q \in \mathcal{SH}_{1}^{n}| Q(\ell)\in{\mathbb{S}^{n+}}\ \mu\text{-}a.e.\big\}$;
$(\mathcal{SH}^{n}_{\infty},\|\cdot\|_{\infty})$ is an ordered Banach space with the order induced by
$\mathcal{H}_{\infty}^{n+}=\big\{Q \in \mathcal{SH}_{\infty}^{n} | Q(\ell)\in{\mathbb{S}^{n+}}
\ \mu\text{-almost everywhere on $\Theta$ ($\mu$-$a.e.$)}\big\}$.
$\mathcal{H}_{\infty}^{n-}=\big\{Q\in \mathcal{SH}_{\infty}^{n} | Q(\ell)\leq 0\ \mu\text{-}a.e.\big\}$;
$\mathcal{H}_{\infty}^{n+*}=\big\{Q \in \mathcal{H}_{\infty}^{n+} |Q(\ell)\gg 0 \ \mu\text{-}a.e. \big\}$,
where $Q(\ell)\gg 0$ means that $Q(\ell) \geq \xi I$ for some $\xi>0$.
Given $\mathcal{I}=\{\mathcal{I}(\ell)\}_{\ell\in\Theta}$, where $\mathcal{I}(\ell)=I$,
one can easily check that $\mathcal{I}\in\mathcal{H}_{\infty}^{n+*}$ is the identity element in $\mathcal{SH}_{\infty}^{n}.$
In this paper, all properties of the Borel-measurable functions,
including inequalities and equations, should be understood in the sense of $\mu$-$a.e.$,
or for $\mu$-almost all $\ell\in\Theta$, unless otherwise specified.

Consider the following MJLSs:
\begin{equation} \label{system}
\left\{
\begin{array}{ll}
x(k+1)=A(\vartheta(k))x(k)+B(\vartheta(k))u(k) +F(\vartheta(k))v(k),\ \\
z(k)=\left[
       \begin{array}{c}
         C(\vartheta(k))x(k) \\
         D(\vartheta(k))u(k) \\
       \end{array}
     \right],\ \forall k\in\mathbb{N}
\end{array}
\right.
\end{equation}
with $x(0)=x_{0}\in\mathbb{R}^{n}$ and $\vartheta(0)=\vartheta_{0}$,
where $x_{0}$ is a deterministic vector in $\mathbb{R}^{n}$
and $\vartheta_{0}$ is a random variable.
$x(k)\in{\mathbb{R}^{n}}$, $u(k)\in{\mathbb{R}^{m}}$, and $z(k)\in{\mathbb{R}^{p+q}}$ are the system state, input, and output. $v(k)\in{\mathbb{R}^{r}}$ is the external disturbance input.
$\mathfrak{F}_{k}$ represents the $\sigma$-field generated by $\{ \vartheta_0, \vartheta(1), \cdots, \vartheta(k)\}$.

Throughout this paper, we make the following assumptions.
\begin{assumption}\label{Assumption1}
$(i)$~$A=\{A(\ell)\}_{\ell\in\Theta}\in\mathcal{H}^{n\times n}_{\infty}$,
$B=\{B(\ell)\}_{\ell\in\Theta}\in\mathcal{H}^{n\times m}_{\infty}$,
$C=\{C(\ell)\}_{\ell\in\Theta}\in\mathcal{H}^{p\times n}_{\infty}$,
$D=\{D(\ell)\}_{\ell\in\Theta}\in\mathcal{H}^{q\times m}_{\infty}$,
and $F=\{F(\ell)\}_{\ell\in\Theta}\in\mathcal{H}^{n\times r}_{\infty}$;

$(ii)$~The Markov chain $\{\vartheta(k),k\in\mathbb{N}\}$ is directly accessible,
i.e., the operation mode of the system is known at each $k\in\mathbb{N}$;

$(iii)$~$D(\ell)^{T}D(\ell)=I$ $\mu\text{-}a.e.$;

$(iv)$~The initial distribution $\mu_{0}$ is absolutely continuous w.r.t. $\mu$.
\end{assumption}

Considering $(iv)$ of Assumption \ref{Assumption1},
it follows from the Radon-Nikodym theorem (for example, see Theorem 32.2 in \cite{BookBillingsley1995})
that there exists $\nu_{0}\in\mathcal{H}^{1+}_{1}$ such that for any $\Lambda\in{\mathcal{B}(\Theta)}$, $\mu_{0}(\Lambda)=\int_{\Lambda}\nu_{0}(\ell)\mu(d\ell)$.

We continue this section with giving the following operators:
For any $Q\in\mathcal{H}^{n\times n}_{\infty}$, $V\in\mathcal{H}^{n\times n}_{1}$,
$U\in\mathcal{H}^{n\times n}_{\infty}$, and $\ell\in\Theta$, define
\begin{align*}
&\mathcal{L}_{Q}(V)(\ell):=\int_{\Theta}g(t,\ell)
Q(t)V(t)Q(t)^{T}\mu(dt),\\
&\mathcal{E}(U)(\ell):=\int_{\Theta}g(\ell,t)U(t)\mu(dt),\nonumber \\ &\mathcal{T}_{Q}(U)(\ell):=Q(\ell)^{T}\mathcal{E}(U)(\ell)Q(\ell),\nonumber
\end{align*}
and the bounded bilinear operator
$$\langle V; U\rangle:=\int_{\Theta}tr(V(t)^{T}U(t))\mu (dt),$$
where $tr(V(t)^{T}U(t))$ denotes the trace of $V(t)^{T}U(t)$.

The following properties regarding the defined operators are useful.
\begin{proposition}\label{adjoint}\cite{Costa2014}
For any $Q\in\mathcal{H}^{n\times n}_{\infty}$, $V\in\mathcal{H}^{n\times n}_{1}$, and $U\in{\mathcal{H}^{n\times n}_{\infty}}$,
the following hold:

$(i)$~$\mathcal{L}_{Q}\in\mathbf{B}(\mathcal{H}^{n\times n}_{1})$,
and $\mathcal{L}_{Q}$ is a positive operator on $\mathcal{SH}^{n}_{1}$;

$(ii)$~$\mathcal{T}_{Q}\in\mathbf{B}(\mathcal{H}^{n\times n}_{\infty})$,
and $\mathcal{T}_{Q}$ is a positive operator on $\mathcal{SH}^{n}_{\infty}$;

$(iii)$~$\langle \mathcal{L}_{Q}(V); U\rangle= \langle V; \mathcal{T}_{Q}(U)\rangle$.
\end{proposition}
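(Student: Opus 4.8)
The plan is to treat the three claims in turn; the single fact that drives all the estimates is that $g(\ell,\cdot)$ is the density of the probability measure $\mathbb{G}(\ell,\cdot)$, so that $\int_{\Theta}g(\ell,t)\,\mu(dt)=\mathbb{G}(\ell,\Theta)=1$ for $\mu$-almost every $\ell\in\Theta$. For $(i)$, linearity of $\mathcal{L}_{Q}$ is immediate. For boundedness on $\mathcal{H}^{n\times n}_{1}$ I would move the norm inside, use submultiplicativity to write $\|Q(t)V(t)Q(t)^{T}\|\leq\|Q\|_{\infty}^{2}\|V(t)\|$, and apply Tonelli (the integrand is nonnegative) to interchange the two integrations; integrating $g(t,\ell)$ against $\mu(d\ell)$ first produces $1$ and leaves $\|\mathcal{L}_{Q}(V)\|_{1}\leq\|Q\|_{\infty}^{2}\|V\|_{1}$, whence $\mathcal{L}_{Q}\in\mathbf{B}(\mathcal{H}^{n\times n}_{1})$ with $\|\mathcal{L}_{Q}\|\leq\|Q\|_{\infty}^{2}$. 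Measurability of $\ell\mapsto\mathcal{L}_{Q}(V)(\ell)$ comes from joint measurability of $g$ and a standard parameter-integral argument. Positivity is a congruence observation: if $V(t)\in\mathbb{S}^{n+}$ $\mu$-a.e. then $Q(t)V(t)Q(t)^{T}\in\mathbb{S}^{n+}$, and integration against the nonnegative kernel $g(t,\ell)\geq0$ preserves this, so $\mathcal{L}_{Q}$ carries $\mathcal{H}_{1}^{n+}$ into itself.

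For $(ii)$, I would first record that $\mathcal{E}$ is a contraction on $\mathcal{H}^{n\times n}_{\infty}$, since $\|\mathcal{E}(U)(\ell)\|\leq\int_{\Theta}g(\ell,t)\|U(t)\|\,\mu(dt)\leq\|U\|_{\infty}$ by the density normalization. As $\mathcal{T}_{Q}(U)(\ell)=Q(\ell)^{T}\mathcal{E}(U)(\ell)Q(\ell)$ is a congruence, the bound $\|\mathcal{T}_{Q}(U)\|_{\infty}\leq\|Q\|_{\infty}^{2}\|U\|_{\infty}$ follows and gives $\mathcal{T}_{Q}\in\mathbf{B}(\mathcal{H}^{n\times n}_{\infty})$; linearity and measurability are routine. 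Positivity is again the congruence argument: $U\geq0$ forces $\mathcal{E}(U)(\ell)\geq0$ (nonnegative kernel times nonnegative matrix), and the congruence by $Q(\ell)$ keeps the result in $\mathbb{S}^{n+}$.

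For $(iii)$, I would expand the left-hand side into the double integral
\[
\langle\mathcal{L}_{Q}(V);U\rangle=\int_{\Theta}\int_{\Theta}g(t,\ell)\,tr\big(Q(t)V(t)^{T}Q(t)^{T}U(\ell)\big)\,\mu(dt)\,\mu(d\ell),
\]
interchange the order of integration, use the cyclic invariance of the trace to recast the integrand as $tr\big(V(t)^{T}Q(t)^{T}U(\ell)Q(t)\big)$, and then pull the $\ell$-independent factors $Q(t)$ outside the inner integral; this exposes $\int_{\Theta}g(t,\ell)U(\ell)\,\mu(d\ell)=\mathcal{E}(U)(t)$ and hence $Q(t)^{T}\mathcal{E}(U)(t)Q(t)=\mathcal{T}_{Q}(U)(t)$, delivering $\langle V;\mathcal{T}_{Q}(U)\rangle$. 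The only nonroutine point, and the main obstacle, is justifying the interchange: the integrand is no longer of one sign, so I must invoke Fubini rather than Tonelli and verify absolute integrability. This follows from $|tr(M)|\leq n\|M\|$ together with the estimates already used in $(i)$, which supply the finite majorant $n\|Q\|_{\infty}^{2}\|U\|_{\infty}\|V\|_{1}$; with this in hand the interchange is legitimate and the identity follows.
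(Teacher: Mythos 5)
Your proof is correct. The paper itself gives no proof of this proposition—it is quoted directly from \cite{Costa2014}—so there is nothing internal to compare against, but your argument is the standard one: the estimates in $(i)$ and $(ii)$ correctly exploit the normalization $\int_{\Theta}g(\ell,t)\mu(dt)=1$ in the appropriate argument of the kernel (over $\ell$ in $g(t,\ell)$ for the $\mathcal{L}_{Q}$ bound, over $t$ in $g(\ell,t)$ for the $\mathcal{E}$ bound), the positivity claims follow from congruence plus nonnegativity of the kernel, and in $(iii)$ you rightly identify the Fubini interchange as the one step needing justification and supply the correct integrable majorant $n\|Q\|_{\infty}^{2}\|U\|_{\infty}\|V(t)\|g(t,\ell)$.
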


\begin{proposition}\label{lemma468L}
$(i)$~For any $Q\in \mathcal{H}^{n\times n}_{\infty}$ and $V\in \mathcal{SH}_{1}^{n}$,
$\mathcal{L}_{-Q}(V)(\ell) =\mathcal{L}_{Q}(V)(\ell)$ $\mu$-$a.e.$;

$(ii)$~For any $Q_{1}\in \mathcal{H}^{n\times n}_{\infty},\ Q_{2}\in \mathcal{H}^{n\times n}_{\infty}$, and $\bar{V}\in \mathcal{H}_{1}^{n+}$, $\mathcal{L}_{Q_{1}+Q_{2}}(\bar{V})(\ell) \leq(1+\xi^{2})\mathcal{L}_{Q_{1}}(\bar{V})(\ell)+(1+\frac{1}{ \xi^{2}})
\mathcal{L}_{Q_{2}}(\bar{V})(\ell)$
$\mu$-$a.e.$, where $\xi \neq 0$.
\end{proposition}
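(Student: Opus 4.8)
The plan is to treat the two parts separately, each reducing to a pointwise (in $t$) matrix manipulation that is then transported through the integral defining $\mathcal{L}_Q$.

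For part $(i)$ the claim is essentially immediate from the definition of $\mathcal{L}_Q$. Replacing $Q$ by $-Q$ in the integrand gives $(-Q(t))V(t)(-Q(t))^{T}=Q(t)V(t)Q(t)^{T}$ for $\mu$-almost all $t$, since the two sign changes cancel. Hence the integrands coincide $\mu$-$a.e.$ and so do the integrals, yielding $\mathcal{L}_{-Q}(V)(\ell)=\mathcal{L}_{Q}(V)(\ell)$ $\mu$-$a.e.$

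For part $(ii)$ I would first establish a pointwise inequality in $\mathbb{S}^{n}$ and then integrate. Since $\bar{V}\in\mathcal{H}_{1}^{n+}$, we have $\bar{V}(t)\in\mathbb{S}^{n+}$ for $\mu$-almost all $t$, so it admits a symmetric positive semi-definite square root $\bar{V}(t)^{1/2}$; setting $M_{j}(t):=Q_{j}(t)\bar{V}(t)^{1/2}$ for $j=1,2$, the relation $(\xi M_{1}(t)-\xi^{-1}M_{2}(t))(\xi M_{1}(t)-\xi^{-1}M_{2}(t))^{T}\geq 0$ expands, by completing the square, to the cross-term bound
\[
M_{1}(t)M_{2}(t)^{T}+M_{2}(t)M_{1}(t)^{T}\leq \xi^{2}M_{1}(t)M_{1}(t)^{T}+\xi^{-2}M_{2}(t)M_{2}(t)^{T}.
\]
Adding $M_{1}(t)M_{1}(t)^{T}+M_{2}(t)M_{2}(t)^{T}$ to both sides and using $\bar{V}(t)=\bar{V}(t)^{1/2}\bar{V}(t)^{1/2}$ produces the pointwise estimate
\[
(Q_{1}(t)+Q_{2}(t))\bar{V}(t)(Q_{1}(t)+Q_{2}(t))^{T}\leq (1+\xi^{2})Q_{1}(t)\bar{V}(t)Q_{1}(t)^{T}+(1+\xi^{-2})Q_{2}(t)\bar{V}(t)Q_{2}(t)^{T}.
\]

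Finally, since $g(t,\ell)\geq 0$, multiplying the last inequality by $g(t,\ell)$ preserves the semi-definite order, and integrating over $t\in\Theta$ against $\mu$ gives exactly the asserted bound on $\mathcal{L}_{Q_{1}+Q_{2}}(\bar{V})(\ell)$. The only points requiring care are the measurability of $t\mapsto\bar{V}(t)^{1/2}$, which holds because the square-root map is continuous on $\mathbb{S}^{n+}$ and $\bar{V}$ is measurable, and the fact that integration against the nonnegative density $g(\cdot,\ell)$ respects the order ``$\leq$'' on $\mathbb{S}^{n}$; the latter follows by testing against arbitrary $w\in\mathbb{R}^{n}$, so that $w^{T}(\cdot)w$ converts the matrix inequality into a scalar one to which monotonicity of the Lebesgue integral applies. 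Integrability throughout is guaranteed by $Q_{1},Q_{2}\in\mathcal{H}_{\infty}^{n\times n}$ and $\bar{V}\in\mathcal{H}_{1}^{n+}$, so no genuine difficulty arises; the completion-of-squares step is the crux.
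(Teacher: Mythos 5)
Your proof is correct; the paper omits this argument entirely as ``simple calculations,'' and your completion-of-squares computation is exactly the intended one. One minor simplification: you can skip the square root $\bar V(t)^{1/2}$ (and with it the measurability discussion) by applying $\bigl(\xi Q_{1}(t)-\xi^{-1}Q_{2}(t)\bigr)\bar V(t)\bigl(\xi Q_{1}(t)-\xi^{-1}Q_{2}(t)\bigr)^{T}\geq 0$ directly, which holds because $M\bar V(t)M^{T}\geq 0$ for any matrix $M$ when $\bar V(t)\in\mathbb{S}^{n+}$, and yields the same cross-term bound.
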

\begin{proof}
The proof can be made through simple calculations and is then omitted.
\end{proof}

In the remainder of this paper, unless otherwise specified, for any $Q\in\mathcal{H}^{n\times n}_{\infty}$,
$\mathcal{L}_{Q}$ and $\mathcal{T}_{Q}$ are restricted on $\mathcal{SH}_{1}^{n}$ and $\mathcal{SH}_{\infty}^{n}$, respectively.

\subsection{The Review of Stability}\label{subsec2}
The following MJLS, denoted by $(A|\mathbb{G})$ for convenience, is considered in this subsection:
$$x(k+1)=A(\vartheta(k))x(k),\ k\in \mathbb{N}.$$
Define a function sequence $\{X(k), k\in\mathbb{N}\}$ associated with $(A|\mathbb{G})$ as follows:
\begin{equation}\label{BArxX}
X(k+1)(\ell)=\mathbf{E}\{x(k+1)x(k+1)^{T}
g(\vartheta(k), \ell)\},\ \ell\in\Theta,
\end{equation}
where $X(0)=X_{0}$ and $X_{0}(\ell)=x_{0}x_{0}^{T}\nu_{0}(\ell)$.
According to Proposition 5.1 of \cite{Costa2014}, for each $k\in \mathbb{N},$ $X(k)\in\mathcal{H}_{1}^{n+}$ and
\begin{equation}\label{88Xklre}
X(k+1)(\ell)=\mathcal{L}_{A}(X(k))(\ell)\ \mu\text{-}a.e..
\end{equation}
Further, it can be concluded inductively that
\begin{equation}\label{495xk=Lk}
X(k)(\ell)=\mathcal{L}_{A}^{k}(X_{0})(\ell)\ \mu\text{-}a.e..
\end{equation}
%
%

The following lemma explores the linkage between the system state $x(k)$ of $(A|\mathbb{G})$ and $\{X(k),\ k\in\mathbb{N}\}$.
\begin{lemma}\label{lemma345XxXxl2}
For any $Q\in \mathcal{H}^{n\times n}_{\infty}$ and $k\in \mathbb{N}$,
$$\|QX(k)Q^{T}\|_{1}\leq\mathbf{E}\{\|Q(\vartheta(k))x(k)\|^{2}\}.$$
Moreover, $\sum_{k=0}^{\infty}\mathbf{E}\{\|Q(\vartheta(k))x(k)\|^{2}\}<\infty$ iff $\sum_{k=0}^{\infty}\|QX(k)Q^{T}\|_{1}<\infty$.
\end{lemma}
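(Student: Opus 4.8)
The plan is to sandwich the quantity $\mathbf{E}\{\|Q(\vartheta(k))x(k)\|^{2}\}$ between $\|QX(k)Q^{T}\|_{1}$ and $n\|QX(k)Q^{T}\|_{1}$ by first establishing the exact identity
$$\mathbf{E}\{\|Q(\vartheta(k))x(k)\|^{2}\}=\int_{\Theta}tr\big(Q(\ell)X(k)(\ell)Q(\ell)^{T}\big)\mu(d\ell),$$
and then invoking the elementary fact that for a positive semi-definite matrix $M\in\mathbb{S}^{n+}$ one has $\|M\|=\lambda_{max}(M)\leq tr(M)\leq n\|M\|$. Since $X(k)\in\mathcal{H}_{1}^{n+}$ forces $X(k)(\ell)\in\mathbb{S}^{n+}$, the matrix $Q(\ell)X(k)(\ell)Q(\ell)^{T}$ is positive semi-definite $\mu$-$a.e.$, so both inequalities hold pointwise and, after integrating against $\mu$, yield $\|QX(k)Q^{T}\|_{1}\leq\mathbf{E}\{\|Q(\vartheta(k))x(k)\|^{2}\}\leq n\|QX(k)Q^{T}\|_{1}$. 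The first displayed inequality of the lemma is then exactly the left-hand bound, and the summability equivalence follows immediately by summing this two-sided estimate over $k$, since one tail being finite forces the other to be.

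The crux is therefore the identity itself, which I would prove by a conditioning argument. First I would write $\|Q(\ell)x(k)\|^{2}=tr\big(Q(\ell)x(k)x(k)^{T}Q(\ell)^{T}\big)$ and recall that for $k\geq1$ the definition (\ref{BArxX}) gives $X(k)(\ell)=\mathbf{E}\{x(k)x(k)^{T}g(\vartheta(k-1),\ell)\}$. Substituting this and interchanging the expectation with the $\mu$-integral—justified by Tonelli's theorem, as both $\|Q(\ell)x(k)\|^{2}$ and the density $g(\vartheta(k-1),\ell)$ are non-negative—reduces the right-hand side to $\mathbf{E}\big\{\int_{\Theta}\|Q(\ell)x(k)\|^{2}g(\vartheta(k-1),\ell)\mu(d\ell)\big\}$.

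The heart of the matter is to recognize the inner integral as a conditional expectation. Because $x(k)$ depends only on $\vartheta(0),\dots,\vartheta(k-1)$, it is $\mathfrak{F}_{k-1}$-measurable, while the Markov property together with the density assumption on $\mathbb{G}$ gives $\mathbf{E}\{h(\vartheta(k))\,|\,\mathfrak{F}_{k-1}\}=\int_{\Theta}h(\ell)g(\vartheta(k-1),\ell)\mu(d\ell)$ for any non-negative measurable $h$. Applying this with $x(k)$ frozen as an $\mathfrak{F}_{k-1}$-measurable factor identifies the inner integral with $\mathbf{E}\{\|Q(\vartheta(k))x(k)\|^{2}\,|\,\mathfrak{F}_{k-1}\}$, and the tower property then delivers the identity. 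The case $k=0$ I would handle separately but analogously, using $X(0)(\ell)=x_{0}x_{0}^{T}\nu_{0}(\ell)$ and the fact that $\nu_{0}$ is the $\mu$-density of the law $\mu_{0}$ of $\vartheta(0)$. I expect the main obstacle to lie not in any single computation but in carefully justifying the interchange of expectation and integration and in the measurability bookkeeping for the conditioning step; once these are secured, the whole statement collapses onto the trace-versus-norm inequalities for positive semi-definite matrices.
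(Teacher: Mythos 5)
Your proposal is correct and follows essentially the same route as the paper: both establish the identity $\mathbf{E}\{\|Q(\vartheta(k))x(k)\|^{2}\}=\int_{\Theta}tr\{Q(\ell)X(k)(\ell)Q(\ell)^{T}\}\mu(d\ell)$ and then sandwich the trace between $\|\cdot\|$ and $n\|\cdot\|$ for positive semi-definite matrices to obtain the two-sided bound, from which the summability equivalence is immediate. The only difference is that you spell out the conditioning/Tonelli argument behind the identity, which the paper absorbs into the definition of $X(k)$ and a citation; your extra detail is sound.
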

\begin{proof}
For each $k\in\mathbb{N}$, we have that
\begin{equation}\label{P(k)x(k)}
\begin{aligned}
&\mathbf{E}\{\|Q(\vartheta(k))x(k)\|^{2}\}\\
=&tr\{\mathbf{E}\{Q(\vartheta(k))x(k)x(k)^{T}Q(\vartheta(k))^{T}\}\}\\
=&\int_{\Theta}tr\{Q(\ell)X(k)(\ell)Q(\ell)^{T}\} \mu(d\ell).
\end{aligned}
\end{equation}
From Proposition 1 in \cite{Xiao2023}, for each $k\in\mathbb{N}$, it follows that
\begin{equation*}
\begin{aligned}
\|Q(\ell)X(k)(\ell)Q(\ell)^{T}\|\leq &tr\{Q(\ell)X(k)(\ell)Q(\ell)^{T}\}\\
\leq&n \|Q(\ell)X(k)(\ell)Q(\ell)^{T}\|\ \mu\text{-}a.e..
\end{aligned}
\end{equation*}
Combining this with \eqref{P(k)x(k)} gives that
$$\|QX(k)Q^{T}\|_{1}\leq \mathbf{E}\{\|Q(\vartheta(k))x(k)\|^{2}\}\leq n \|QX(k)Q^{T}\|_{1},$$
and the equivalence between $\sum_{k=0}^{\infty}\mathbf{E}\{\|Q(\vartheta(k))x(k)\|^{2}\}<\infty$ and $\sum_{k=0}^{\infty}\|QX(k)Q^{T}\|_{1}<\infty$
is immediately worked out.
\end{proof}

Now we review two notions of exponential stability.
\begin{definition}\label{Defemssemssc}\cite{Xiao2023}
$(i)$~$(A|\mathbb{G})$ is said to be EMSS if there exist $\beta \geq 1$, $\alpha \in(0,1)$ such that for any initial conditions $(x_0,\vartheta_{0})$, we have
$\mathbf{E}\{\|x(k)\|^{2}\} \leq \beta \alpha^{k}\|x_{0}\|^{2},\ \forall k\in \mathbb{N}$;

$(ii)$~$(A|\mathbb{G})$ is said to be EMSS-C if there exist $\beta \geq 1$, $\alpha \in(0,1)$ such that
for any initial conditions $(x_{0}, \vartheta_{0})$, we have
$\mathbf{E}\{\|x(k)\|^{2}|\vartheta_{0}=\ell\} \leq \beta \alpha^{k}\|x_{0}\|^{2},\ \forall k\in \mathbb{N}$ holds for $\mu$-almost all $\ell\in\Theta_{\vartheta_0}.$ Here,
$\Theta_{\vartheta_0}=\{\ell\in\Theta|\nu_{0}(\ell)>0 \ \mu\text{-}a.e.\}.$
\end{definition}
\begin{remark}
$(A|\mathbb{G})$ being EMSS-C means that
the conditional expectation $\mathbf{E}\{\|x(k)\|^{2}|\vartheta_{0}=\ell\}$
converges exponentially to zero for $\mu$-almost all  $\ell\in\Theta_{\vartheta_0},$ independent of initial conditions $(x_{0},\vartheta_{0})$.
This EMSS-C concept generalizes the notion of
exponential stability in the mean square with conditioning of type I (ESMS-CI from \cite{BookDragan2010}) or uniform exponential stability in conditional mean (from \cite{Ungureanu2014Optimization})
to the case where the Markov chain takes values in a general Borel set.
\end{remark}

The equivalence between EMSS and EMSS-C is rigorously proved in the following theorem.
\begin{theorem}\label{emss-emss-c}
$(A|\mathbb{G})$ is EMSS iff $(A|\mathbb{G})$ is EMSS-C.
\end{theorem}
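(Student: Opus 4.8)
The plan is to prove the two implications separately: the direction EMSS-C $\Rightarrow$ EMSS follows by integrating the conditional estimate against the initial law, while EMSS $\Rightarrow$ EMSS-C requires recovering a pointwise (in $\ell$) bound from the averaged bound that EMSS provides. First I would exploit linearity: writing $\Phi(k):=A(\vartheta(k-1))\cdots A(\vartheta(0))$ with $\Phi(0):=I$, we have $x(k)=\Phi(k)x_{0}$, hence $\|x(k)\|^{2}=x_{0}^{T}\Phi(k)^{T}\Phi(k)x_{0}$ and, conditioning on the starting mode, $\mathbf{E}\{\|x(k)\|^{2}\,|\,\vartheta_{0}=\ell\}=x_{0}^{T}M_{k}(\ell)x_{0}$, where $M_{k}(\ell):=\mathbf{E}\{\Phi(k)^{T}\Phi(k)\,|\,\vartheta_{0}=\ell\}\in\mathbb{S}^{n+}$. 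Because the kernel $\mathbb{G}(\ell,\cdot)$ has the density $g(\ell,\cdot)$ w.r.t.\ $\mu$, the function $M_{k}(\cdot)$ admits an explicit iterated-integral representation in terms of $g$ and $A$, from which its measurability follows; moreover $\|A(\ell)\|\leq\|A\|_{\infty}$ $\mu$-$a.e.$ gives $\|M_{k}(\ell)\|\leq\|A\|_{\infty}^{2k}$ $\mu$-$a.e.$, so $M_{k}\in\mathcal{SH}^{n}_{\infty}$. Using $\mu_{0}(d\ell)=\nu_{0}(\ell)\mu(d\ell)$ from $(iv)$ of Assumption~\ref{Assumption1}, this yields the key identity $\mathbf{E}\{\|x(k)\|^{2}\}=\int_{\Theta}x_{0}^{T}M_{k}(\ell)x_{0}\,\nu_{0}(\ell)\mu(d\ell)$.

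For EMSS-C $\Rightarrow$ EMSS, since $\nu_{0}$ vanishes $\mu$-$a.e.$ outside $\Theta_{\vartheta_{0}}$ and EMSS-C gives $x_{0}^{T}M_{k}(\ell)x_{0}\leq\beta\alpha^{k}\|x_{0}\|^{2}$ for $\mu$-almost all $\ell\in\Theta_{\vartheta_{0}}$, integrating the key identity produces $\mathbf{E}\{\|x(k)\|^{2}\}\leq\beta\alpha^{k}\|x_{0}\|^{2}$ with the very same constants $\beta,\alpha$, i.e.\ EMSS.

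For the converse, the point is that EMSS must hold for \emph{every} admissible initial condition, hence for every probability density $\nu_{0}\geq0$ and every $x_{0}$ the key identity forces $\int_{\Theta}x_{0}^{T}M_{k}(\ell)x_{0}\,\nu_{0}(\ell)\mu(d\ell)\leq\beta\alpha^{k}\|x_{0}\|^{2}$. I would then show this averaged family of inequalities upgrades to the pointwise bound $M_{k}(\ell)\leq\beta\alpha^{k}I$ for $\mu$-$a.e.$ $\ell$. Arguing by contradiction, if $S:=\{\ell:\lambda_{max}(M_{k}(\ell))>\beta\alpha^{k}\}$ had positive measure, then by $\sigma$-finiteness of $\mu$ I may take $0<\mu(S)<\infty$; using a countable dense family of unit vectors together with the uniform bound $\|M_{k}(\ell)\|\leq\|A\|_{\infty}^{2k}$ to control the dependence on the direction, I can localize to a single unit vector $\bar{x}$, a constant $\eta>0$, and a measurable $T\subseteq S$ with $0<\mu(T)<\infty$ on which $\bar{x}^{T}M_{k}(\ell)\bar{x}\geq\beta\alpha^{k}+\eta$. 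Choosing the density $\nu_{0}=\mathbf{1}_{T}/\mu(T)$ and $x_{0}=\bar{x}$ then contradicts the averaged inequality. Thus the bound holds $\mu$-$a.e.$ for each $k$; discarding the countable union of exceptional null sets yields $\|M_{k}(\ell)\|\leq\beta\alpha^{k}$ for all $k$ and $\mu$-$a.e.$ $\ell$, whence $\mathbf{E}\{\|x(k)\|^{2}\,|\,\vartheta_{0}=\ell\}=x_{0}^{T}M_{k}(\ell)x_{0}\leq\beta\alpha^{k}\|x_{0}\|^{2}$ for every $x_{0}$ and $\mu$-$a.e.$ $\ell$, in particular for $\mu$-almost all $\ell\in\Theta_{\vartheta_{0}}$, which is exactly EMSS-C.

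The hard part will be this last localization step, which is precisely where the Borel setting departs from the countable one: since $\mu$ may be non-atomic one cannot simply test the averaged inequality against a Dirac initial law $\delta_{\ell}$ to read off the conditional bound at $\ell$, as one does in the countable case. Instead the pointwise estimate must be squeezed out of averages over small finite-measure sets, which makes the measurability and essential boundedness of $M_{k}$ (needed to justify the density-based direction argument) and the selection of a \emph{single} direction $\bar{x}$ valid on a set of positive measure the genuinely delicate ingredients; the freedom to vary the absolutely continuous initial distribution $\mu_{0}$ is what makes the whole scheme possible.
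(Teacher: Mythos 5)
Your proposal is correct and follows essentially the same route as the paper: both represent the conditional second moment as $x_{0}^{T}\mathcal{T}_{A}^{k}(\mathcal{I})(\ell)x_{0}$ (your $M_{k}(\ell)$), integrate against $\nu_{0}$ for the easy direction, and for the converse exploit the freedom to choose the absolutely continuous initial law — testing against a normalized density supported on a finite-measure subset obtained via $\sigma$-finiteness — to upgrade the averaged EMSS bound to a pointwise one. The only organizational difference is that you argue directly for each fixed $k$ (adding a localization over a countable dense set of directions, which is sound given the measurability and essential boundedness of $M_{k}$), whereas the paper negates the full EMSS-C statement and contradicts EMSS at a single time $\bar{k}$; your version even yields EMSS-C with the same constants $\beta,\alpha$.
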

\begin{proof}
For the sufficiency part, refer to Proposition 5 in \cite{Xiao2023}.
We will prove the necessity by contradiction.
Following similar arguments as in the proof of Theorem 1 from \cite{Xiao2023},
we have that for
any initial conditions $(x_{0}, \vartheta_{0})$ and $k\in\mathbb{N},$
$
\mathbf{E}\{\|x(k,x_{0}, \vartheta_{0})\|^{2}|\vartheta_{0}=\ell\}
=x_{0}^{T}\mathcal{T}_{A}^{k}(\mathcal{I})(\ell)x_{0}
$
is valid for $\mu\text{-}$almost all $\ell\in{\Theta_{\vartheta_0}},$
where $x(k,x_{0}, \vartheta_{0})$ is the system state of $(A|\mathbb{G})$ with the initial conditions $(x_{0}, \vartheta_{0})$.
Furthermore, it is established that for any two different random variables $\hat{\vartheta}_{0}$ and $\bar{\vartheta}_{0}$,
\begin{equation}\label{377idenemssc}
\mathbf{E}\{\|x(k,x_{0},\hat{\vartheta}_{0})\|^{2}|\hat{\vartheta}_{0}=\ell\}
=\mathbf{E}\{\|x(k,x_{0},\bar{\vartheta}_{0})\|^{2}|\bar{\vartheta}_{0}=\ell\}
\end{equation}
is valid for $\mu\text{-}$almost all $\ell\in{\Theta_{\hat{\vartheta}_0}\bigcap\Theta_{\bar{\vartheta}_0}}.$

If $(A|\mathbb{G})$ is not EMSS-C, then
there exist initial conditions $(\bar{x}_{0}, \hat{\vartheta}_{0}),$
$\bar{k}\in\mathbb{N},$
and $\bar{\Lambda}\in\mathcal{B}(\Theta)$ with  $\mu(\bar{\Lambda})>0$
such that, for any $\beta\geq1$ and $\alpha\in(0,1)$, \begin{equation}\label{786fiu}
\mathbf{E}\{\|x(\bar{k},\bar{x}_{0}, \hat{\vartheta}_{0})\|^{2}|\hat{\vartheta}_{0}=\ell\} > \beta \alpha^{\bar{k}}\|\bar{x}_{0}\|^{2}
\end{equation}
holds for $\mu\text{-}$almost all $\ell\in\bar{\Lambda}$.
For set $\bar{\Lambda}$,
define a $\sigma$-algebraic $\bar{\Lambda}\bigcap\mathcal{B}(\Theta)
:=\{\bar{\Lambda}\bigcap\Delta,\ \forall \Delta\in\mathcal{B}(\Theta)\}.$
Clearly, $\mu$ is a measure on measurable space
$(\bar{\Lambda},\bar{\Lambda}\bigcap\mathcal{B}(\Theta)).$
Moreover, since $\mu$ is a $\sigma$-finite measure on $\mathcal{B}(\Theta)$,
there exist $\{\Lambda_{i},\ i\in\mathbb{N}^{+}\}$ with $\mu(\Lambda_{i})<+\infty$ such that
$\Theta =\bigcup _{i=1}^{\infty} \Lambda_{i}$.
Now we can write that
$$\bar{\Lambda}=\Theta \bigcap \bar{\Lambda}
=\bigcup _{i=1}^{\infty} (\Lambda_{i}\bigcap \bar{\Lambda}),$$ where $0\leq\mu(\Lambda_{i}\bigcap \bar{\Lambda})\leq \mu(\Lambda_{i})<+\infty$, and then
it can be concluded that
$\mu$ is a $\sigma$-finite measure on measurable space
$(\bar{\Lambda},\bar{\Lambda}\bigcap\mathcal{B}(\Theta)).$
Hence, there exist a measurable function $\hat{\nu}_{0}$ satisfying $\hat{\nu}_{0}(\ell)>0$ for almost all $\ell\in\bar{\Lambda}$ such that $\hat{\mu}_{0}(\bar{\Lambda})<+\infty$ (see the definition of $\sigma$-finite measure and Lemma 1.4 on Page 21 of \cite{Kallenberg2017}),
where $\hat{\mu}_{0}$
is defined by
$$\hat{\mu}_{0}(\Delta)
=\int_{\Delta}\hat{\nu}_{0}(\ell)\mu(d\ell)
\ \text{ for any } \Delta\in\bar{\Lambda}\bigcap\mathcal{B}(\Theta).$$
Define a measurable function $\bar{\nu}_{0}$ by
$$\bar{\nu}_{0}(\ell)=
\begin{cases}
[\hat{\nu}_{0}(\ell)]/[\hat{\mu}_{0}(\bar{\Lambda})], &  \ell\in \bar{\Lambda}, \\
0,&\ell\in \Theta\backslash \bar{\Lambda}.
\end{cases}
$$
Let $\bar{\vartheta}_{0}$ be a random variable with
the probability distribution $\bar{\mu}_{0}$, which is defined by
\begin{equation*}\label{637inditribution}
\bar{\mu}_{0}(\Lambda)=\mathbb{P}(\bar{\vartheta}_0\in{\Lambda})
=\int_{\Lambda}\bar{\nu}_{0}(\ell)\mu(d\ell)
\text{ for any } \Lambda\in\mathcal{B}(\Theta).
\end{equation*}
Clearly, the definition of $\bar{\mu}_{0}$ shows that $\bar{\mu}_{0}$ satisfies $(iv)$ of Assumption \ref{Assumption1}.
Combining \eqref{377idenemssc} and \eqref{786fiu}, we can conclude that
there exist initial conditions $(\bar{x}_{0}, \bar{\vartheta}_{0}),$
$\bar{k}\in\mathbb{N},$
and $\bar{\Lambda}\in\mathcal{B}(\Theta)$ with  $\mu(\bar{\Lambda})>0$
such that, for any $\beta\geq1$ and $\alpha\in(0,1)$,
\begin{equation*}
\begin{split}
\mathbf{E}\{\|x(\bar{k},\bar{x}_{0}, \bar{\vartheta}_{0})\|^{2}|\bar{\vartheta}_{0}=\ell\}
\bar{\nu}_{0}(\ell)
-{\beta \alpha^{\bar{k}}\|\bar{x}_{0}\|^{2}}{\bar{\nu}_{0}(\ell)}
>0
\end{split}
\end{equation*}
holds for $\mu\text{-}$almost all $\ell\in\bar{\Lambda}$.
Furthermore, according to Theorem 15.2 (ii) in \cite{BookBillingsley1995}, we can get that
there exist initial conditions $(\bar{x}_{0}, \bar{\vartheta}_{0})$ and
$\bar{k}\in\mathbb{N}$
such that, for any $\beta\geq1$ and $\alpha\in(0,1)$,
$
\mathbf{E}\{\|x(\bar{k},\bar{x}_{0}, \bar{\vartheta}_{0})\|^{2}\}
=\int_{\Theta}\mathbf{E}\{\|x(\bar{k},\bar{x}_{0}, \bar{\vartheta}_{0})\|^{2}|\bar{\vartheta}_{0}=\ell\}
\bar{\nu}_{0}(\ell)\mu(d\ell)
=\int_{\bar{\Lambda}}\mathbf{E}\{\|x(\bar{k},\bar{x}_{0}, \bar{\vartheta}_{0})\|^{2}|\bar{\vartheta}_{0}=\ell\}
\bar{\nu}_{0}(\ell)\mu(d\ell)
> {\beta \alpha^{\bar{k}}\|\bar{x}_{0}\|^{2}}
\int_{\bar{\Lambda}}{\bar{\nu}_{0}(\ell)}
\mu(d\ell)
=\beta \alpha^{\bar{k}}\|\bar{x}_{0}\|^{2}.$
This contradicts with  $(A|\mathbb{G})$ being EMSS.
The proof is thus completed.
\end{proof}
\begin{remark}
In Theorem 3.6 of \cite{BookDragan2010},
for time-varying MJLSs with a time-inhomogeneous finite-state Markov chain, the authors proposed a sufficient condition
to ensure the equivalence between
ESMS-CI and
exponential stability in the mean square.
This condition requires the existence of a Markov chain whose probability distribution satisfies a uniform positivity condition.
It is worth noting that the technique employed in proof of Theorem 3.6 from \cite{BookDragan2010} is inapplicable to the Borel space setting due to its uncountable nature. Moreover, it is shown in Theorem \ref{emss-emss-c} that for the case considered in this paper, the equivalence between EMSS and EMSS-C holds without any additional assumptions.
\end{remark}

According to Theorem \ref{emss-emss-c}, as well as Theorem 3 and Theorem 4 in \cite{Xiao2023},
the spectral criteria and the Lyapunov-type criteria for EMSS/EMSS-C are summarized as follows:
\begin{theorem}\label{EMSSCiff}
The following assertions are equivalent:

$(i)$~$(A|\mathbb{G})$ is EMSS;

$(ii)$~$r_{\sigma}(\mathcal{L}_{A})<1$;

$(iii)$~$(A|\mathbb{G})$ is EMSS-C;

$(iv)$~$r_{\sigma}(\mathcal{T}_{A})<1$;

$(v)$~Given any $V\in\mathcal{H}_{\infty}^{n+*}$, there exists $U\in\mathcal{H}_{\infty}^{n+*}$ such that
\begin{equation*}
U(\ell)-\mathcal{T}_{A}(U)(\ell)=V(\ell)\  \mu\text{-}a.e.;
\end{equation*}

$(vi)$~There exist $\xi>0$ and $U\in\mathcal{H}_{\infty}^{n+*}$  such that $U(\ell)-\mathcal{T}_{A}(U)(\ell)\geq \xi I$ $\mu\text{-}a.e.$.
\end{theorem}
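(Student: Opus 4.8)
The plan is to regard the six assertions as two clusters tied together by the conditioning bridge of Theorem~\ref{emss-emss-c}. The pair $\{(i),(ii)\}$ lives on the $\mathcal{L}_{A}$ side, since by \eqref{495xk=Lk} the second-moment data obey $X(k)=\mathcal{L}_{A}^{k}(X_{0})$, and EMSS is a statement about the decay of $\mathbf{E}\{\|x(k)\|^{2}\}$, which Lemma~\ref{lemma345XxXxl2} ties to $\|X(k)\|_{1}=\|\mathcal{L}_{A}^{k}(X_{0})\|_{1}$. The quadruple $\{(iii),(iv),(v),(vi)\}$ lives on the $\mathcal{T}_{A}$ side, since the conditional second moment satisfies $\mathbf{E}\{\|x(k)\|^{2}\mid\vartheta_{0}=\ell\}=x_{0}^{T}\mathcal{T}_{A}^{k}(\mathcal{I})(\ell)x_{0}$, as recalled in the proof of Theorem~\ref{emss-emss-c}, and assertions $(iv)$--$(vi)$ are spectral and Lyapunov statements about $\mathcal{T}_{A}$. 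The idea is to close each cluster internally by quoting the relevant results of \cite{Xiao2023}, then glue the clusters together with Theorem~\ref{emss-emss-c}.

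Concretely, I would read off from Theorems~3 and~4 of \cite{Xiao2023} the two within-cluster chains. On the $\mathcal{T}_{A}$ side these results yield $(iii)\Leftrightarrow(iv)\Leftrightarrow(v)\Leftrightarrow(vi)$: EMSS-C is equivalent to $r_{\sigma}(\mathcal{T}_{A})<1$, which in turn is equivalent to solvability of the generalized Lyapunov equation $(v)$ in $\mathcal{H}_{\infty}^{n+*}$ and to the strict inequality $(vi)$. On the $\mathcal{L}_{A}$ side they yield $(i)\Leftrightarrow(ii)$, EMSS being equivalent to $r_{\sigma}(\mathcal{L}_{A})<1$; here the representation \eqref{495xk=Lk} together with Lemma~\ref{lemma345XxXxl2} supplies the passage between the decay of $\mathbf{E}\{\|x(k)\|^{2}\}$ and that of $\|\mathcal{L}_{A}^{k}(X_{0})\|_{1}$.

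It then remains only to connect the two clusters. Theorem~\ref{emss-emss-c} furnishes $(i)\Leftrightarrow(iii)$ with no extra hypotheses, which together with the internal chains closes the cycle and yields, in particular, $(ii)\Leftrightarrow(iv)$. As an independent cross-check of this last equivalence one may appeal to Proposition~\ref{adjoint}$(iii)$: the identity $\langle\mathcal{L}_{A}(V);U\rangle=\langle V;\mathcal{T}_{A}(U)\rangle$ exhibits $\mathcal{T}_{A}$ as the adjoint of $\mathcal{L}_{A}$ under the bilinear pairing, so the two operators share the same spectrum and hence $r_{\sigma}(\mathcal{L}_{A})=r_{\sigma}(\mathcal{T}_{A})$.

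Since the analytic substance is carried by Theorem~\ref{emss-emss-c} and by Theorems~3 and~4 of \cite{Xiao2023}, the main obstacle here is organizational rather than computational: I must align the precise formulations in \cite{Xiao2023}, which are phrased separately for EMSS and EMSS-C, with the unified statement above, and keep careful track of the $\mu$-$a.e.$ qualifications and of the requirement that the Lyapunov solutions lie in $\mathcal{H}_{\infty}^{n+*}$ rather than merely in $\mathcal{H}_{\infty}^{n+}$. The one genuinely delicate point, should one insist on arguing $(ii)\Leftrightarrow(iv)$ directly, is that $\mathcal{SH}_{\infty}^{n}$ is not the full topological dual of $\mathcal{SH}_{1}^{n}$, so equality of spectra does not follow from abstract duality alone and is instead most safely read off from the concrete chain of equivalences just assembled.
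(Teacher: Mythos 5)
Your proposal matches the paper's own treatment exactly: the paper disposes of this theorem in one sentence by citing Theorem~\ref{emss-emss-c} for the EMSS/EMSS-C bridge together with Theorems~3 and~4 of \cite{Xiao2023} for the spectral and Lyapunov criteria within each cluster, which is precisely the organization you describe. (Your parenthetical caution that $\mathcal{SH}_{\infty}^{n}$ is not the full topological dual of $\mathcal{SH}_{1}^{n}$ is in fact incorrect for $\sigma$-finite $\mu$ --- the adjoint identity of Proposition~\ref{adjoint}$(iii)$ does give $r_{\sigma}(\mathcal{L}_{A})=r_{\sigma}(\mathcal{T}_{A})$ directly --- but since you offer that only as a cross-check and do not rely on it, the proof is unaffected.)
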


To be sure, the equivalences in Theorem \ref{EMSSCiff} are consistent with the case where the Markov chain takes values in a countable set.

In the remainder of this paper, without ambiguity, we will only use the EMSS notion.

\subsection{The Decomposition of Matrix-Valued Functions in $\mathcal{H}_{\infty}^{n+}$ }\label{subsec3}
In this subsection, a useful lemma about the decomposition of matrix-valued functions in $\mathcal{H}_{\infty}^{n+}$ is given.
\begin{lemma}\label{semimeas864}
For any $P\in\mathcal{H}_{\infty}^{n+}$, there exists a unique $S\in\mathcal{H}_{\infty}^{n+}$
such that $P(\ell)=[S(\ell)]^{2}\ \mu\text{-}a.e.$.
Particularly, if $P\in\mathcal{H}_{\infty}^{n+*}$,
then there exists a unique $S\in\mathcal{H}_{\infty}^{n+*}$
such that $P(\ell)=[S(\ell)]^{2}\ \mu\text{-}a.e.$.
\end{lemma}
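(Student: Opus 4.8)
The plan is to define $S$ pointwise as the unique positive semi-definite square root of $P(\ell)$ and then check that the resulting matrix-valued function lies in the asserted space. First I would invoke the elementary linear-algebra fact that every symmetric positive semi-definite matrix $M$ admits a \emph{unique} positive semi-definite square root $M^{1/2}$, obtained from the spectral decomposition $M=U\Lambda U^{T}$ by taking entrywise square roots of the diagonal of $\Lambda$. Since $P(\ell)\in\mathbb{S}^{n+}$ for $\mu$-almost all $\ell$, this lets me set $S(\ell):=P(\ell)^{1/2}$ on that set of full measure (and $S(\ell):=0$ on the remaining null set), which immediately gives $S(\ell)\geq 0$ and $[S(\ell)]^{2}=P(\ell)$ $\mu$-a.e.

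The main obstacle is measurability, i.e.\ showing $\ell\mapsto S(\ell)$ is Borel measurable, and I would handle it through polynomial approximation. Put $R:=\|P\|_{\infty}$, so that the eigenvalues of $P(\ell)$ lie in $[0,R]$ for $\mu$-a.e.\ $\ell$. By the Weierstrass approximation theorem there is a sequence of real polynomials $p_{k}$ converging to $\sqrt{t}$ uniformly on $[0,R]$. For any symmetric $M$ with spectrum in $[0,R]$ one has $\|p_{k}(M)-M^{1/2}\|=\max_{i}|p_{k}(\lambda_{i})-\sqrt{\lambda_{i}}|\leq\sup_{t\in[0,R]}|p_{k}(t)-\sqrt{t}|$, so that $p_{k}(P(\ell))\to S(\ell)$ for $\mu$-a.e.\ $\ell$. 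Each $p_{k}(P(\cdot))$ is a polynomial in the measurable entries of $P$, hence measurable; as the $\mu$-a.e.\ pointwise limit of measurable functions, $S$ is measurable, that is, $S\in\mathcal{SH}^{n}$.

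Boundedness is then routine: since $S(\ell)\geq 0$, we have $\|S(\ell)\|=\lambda_{max}(S(\ell))=\sqrt{\lambda_{max}(P(\ell))}=\sqrt{\|P(\ell)\|}\leq R^{1/2}$ $\mu$-a.e., whence $\|S\|_{\infty}\leq\|P\|_{\infty}^{1/2}<\infty$ and $S\in\mathcal{H}_{\infty}^{n+}$. For uniqueness, suppose $S_{1},S_{2}\in\mathcal{H}_{\infty}^{n+}$ both square to $P$ $\mu$-a.e. Then for $\mu$-a.e.\ $\ell$ the matrices $S_{1}(\ell)$ and $S_{2}(\ell)$ are positive semi-definite square roots of $P(\ell)$, so the pointwise uniqueness forces $S_{1}(\ell)=S_{2}(\ell)$ $\mu$-a.e., i.e.\ they coincide as elements of $\mathcal{H}_{\infty}^{n+}$.

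Finally, for the case $P\in\mathcal{H}_{\infty}^{n+*}$ I would use $P(\ell)\geq\xi I$ $\mu$-a.e.\ for some $\xi>0$: the eigenvalues of $P(\ell)$ are then bounded below by $\xi$, so those of $S(\ell)$ are bounded below by $\sqrt{\xi}$, giving $S(\ell)\geq\sqrt{\xi}\,I\gg 0$ $\mu$-a.e.\ and hence $S\in\mathcal{H}_{\infty}^{n+*}$; uniqueness is inherited from the previous step. I expect the measurability argument to be the only genuinely non-trivial point, the remaining assertions being pointwise matrix facts transported to the essential-supremum setting.
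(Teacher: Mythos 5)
Your proof is correct, but it takes a genuinely different route from the paper's. The paper constructs the square root via a monotone quadratic iteration: it sets $Q_{0}(\ell)=\frac{1}{2}[I-P(\ell)/\|P\|_{\infty}]$, $Q_{k}(\ell)=\frac{1}{2}[I-P(\ell)/\|P\|_{\infty}+Q_{k-1}(\ell)^{2}]$, shows $\{Q_{k}\}$ is nondecreasing and bounded, invokes the monotone convergence theorem to obtain a measurable limit $Q$, and defines $S=\|P\|_{\infty}^{1/2}(\mathcal{I}-Q)$; uniqueness is then re-derived algebraically using the fact that $S$ commutes with any $T$ commuting with $P$. You instead define $S(\ell)$ pointwise as the spectral square root and obtain measurability by Weierstrass approximation of $\sqrt{t}$ on $[0,\|P\|_{\infty}]$, noting that $p_{k}(P(\cdot))$ is a polynomial in the measurable entries of $P$ and that $\|p_{k}(M)-M^{1/2}\|\leq\sup_{t\in[0,R]}|p_{k}(t)-\sqrt{t}|$ for symmetric $M$ with spectrum in $[0,R]$ (valid since $p_{k}(M)$ and $M^{1/2}$ are simultaneously diagonalizable). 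Both arguments rest on the same underlying principle --- a $\mu$-a.e.\ pointwise limit of measurable matrix-valued functions is measurable --- which is exactly the point the paper's remark identifies as the obstruction to a naive eigenvalue-decomposition argument, and which you correctly flag as the only nontrivial step. Your uniqueness argument is cleaner: by reducing directly to the pointwise uniqueness of the positive semi-definite square root of a fixed matrix, you avoid the paper's commutation computation entirely. The $\mathcal{H}_{\infty}^{n+*}$ case is handled essentially identically in both proofs via the lower eigenvalue bound.
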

\begin{proof}
It needs only to prove that for any $P\in\mathcal{H}_{\infty}^{n+}$ with $\|P\|_{\infty}\neq 0$, the result is valid.
Otherwise, by setting $S=P$, it is immediate to get that $P(\ell)=[S(\ell)]^{2}\ \mu\text{-}a.e.$.
For any $P\in\mathcal{H}_{\infty}^{n+}$ with $\|P\|_{\infty}\neq 0$, one has that
$$0\leq  I-\frac{P(\ell)}{\|P\|_{\infty}}\leq I\  \mu\text{-}a.e..$$
Now for any $\ell\in\Theta$, define
$$Q_{k}(\ell):=\left\{
\begin{array}{ll}
\frac{1}{2}[I-\frac{P(\ell)}{\|P\|_{\infty}}],\ \ \ \ \ \ \ \ \ \ \ \ \ \ \ \  k=0, \\
\frac{1}{2}[I-\frac{P(\ell)}{\|P\|_{\infty}}+Q_{k-1}(\ell)^{2}],\ k\in\mathbb{N}^{+}.\\
\end{array}
\right.$$
By mathematical induction on $k$, it can be proved that $\|Q_{k}\|_{\infty}\leq 1$, $P Q_{k}=Q_{k} P$,
and $0\leq Q_{k}\leq  Q_{k+1}\leq \mathcal{I}$ hold for any $k\in\mathbb{N}$.
According to the monotone convergence theorem,
there exists $Q\in\mathcal{H}_{\infty}^{n+}$ such that $\lim_{k\rightarrow +\infty}Q_{k}=Q$ and
\begin{equation}\label{positive815}
Q(\ell)=\frac{1}{2}[I-\frac{P(\ell)}{\|P\|_{\infty}}+Q(\ell)^{2}].
\end{equation}
Setting $S=\|P\|_{\infty}^{\frac{1}{2}}(\mathcal{I}-Q)\in\mathcal{H}_{\infty}^{n+}$,
we conclude that $P(\ell)=[S(\ell)]^{2}\ \mu\text{-}a.e.$.
Additionally, for any $T\in\mathcal{SH}_{\infty}^{n}$ satisfying $P(\ell)T(\ell)=T(\ell)P(\ell)$,
it can be seen that for any $k\in\mathbb{N}$,
$T(\ell)Q_{k}(\ell)=Q_{k}(\ell)T(\ell)$,
and therefore $T(\ell)Q(\ell)=Q(\ell)T(\ell)$.
It leads to $T(\ell)S(\ell)=S(\ell)T(\ell)$.
Next we show the uniqueness of $S$.
Suppose there exists another $\bar{S}\in\mathcal{H}_{\infty}^{n+}$ such that
$P(\ell)=[\bar{S}(\ell)]^{2}\ \mu\text{-}a.e.$.
We derive that $\bar{S}(\ell)S(\ell)=S(\ell)\bar{S}(\ell)$.
Then, for any $x\in\mathbb{R}^{n}$,
$$x^{T}[S(\ell)-\bar{S}(\ell)][S(\ell)+\bar{S}(\ell)][S(\ell)-\bar{S}(\ell)]x=0.$$
Letting $S^{\frac{1}{2}}\in\mathcal{H}_{\infty}^{n+}$ and $\bar{S}^{\frac{1}{2}}\in\mathcal{H}_{\infty}^{n+}$
satisfy that  $S(\ell)=[S^{\frac{1}{2}}(\ell)]^{2}\ \mu\text{-}a.e.$ and $\bar{S}(\ell)=[\bar{S}^{\frac{1}{2}}(\ell)]^{2}\ \mu\text{-}a.e.$,
one can obtain that $S^{\frac{1}{2}}(\ell)[S(\ell)-\bar{S}(\ell)]x=0$
and $\bar{S}^{\frac{1}{2}}(\ell)[S(\ell)-\bar{S}(\ell)]x=0$.
Hence, $x^{T}[S(\ell)-\bar{S}(\ell)][S(\ell)-\bar{S}(\ell)]x=0$,
which follows that $S(\ell)=\bar{S}(\ell)\ \mu\text{-}a.e.$.\\
In particular, if $P\in\mathcal{H}_{\infty}^{n+*}$,
then there exists $\xi>0$ such that $P(\ell)\geq \xi I$ $\mu\text{-}a.e.$.
From \eqref{positive815} and the fact that $0\leq Q\leq \mathcal{I}$,
we draw that $I-Q(\ell)\geq \frac{\xi}{2\|P\|_{\infty}}I$ $\mu\text{-}a.e.$.
Thus, $S=\|P\|_{\infty}^{\frac{1}{2}}(\mathcal{I}-Q)\in\mathcal{H}_{\infty}^{n+*}$, and the proof is ended.
\end{proof}
\begin{remark}
We emphasize that in Lemma \ref{semimeas864}, $P,S\in\mathcal{H}_{\infty}^{n+}$,
that is, $P(\ell), S(\ell)\geq 0$ $\mu$-$a.e.$, and both $P(\ell)$ and  $S(\ell)$  are measurable matrix-valued functions that satisfy $\|P\|_{\infty}<\infty$ and $\|S\|_{\infty}<\infty$.
When the Markov chain takes values in a finite set, we have the eigenvalue decomposition $P=MDM^T$, where $M(\cdot)$ satisfies
$M(\cdot)^{T}M(\cdot)=M(\cdot)M(\cdot)^{T}=I$
and
$D(\cdot)$ is the diagonal matrix with the non-negative eigenvalues of $P(\cdot);$
further, the square root decomposition via $P=S^{2}$ is standard, where $S=MD^{1/2}M^T$ and $D^{1/2}(\cdot)$ is the diagonal matrix with the square roots of the non-negative eigenvalues of $P(\cdot)$.
However, this decomposition could not be naturally generalized to
the case where the Markov chain takes values in a Borel set,
for the reason that the measurability of the matrix-valued functions $M(\ell)$ and $D^{1/2}(\ell)$
cannot be trivially deduced from the measurability of $P(\ell)$, $\ell\in\Theta$.
To bridge over the difficulties,
in the proof of Lemma \ref{semimeas864}, we have constructed a  nondecreasing sequence of matrix-valued functions $\{Q_k,k\in\mathbb{N}\}$
with $Q_k\in\mathcal{H}_{\infty}^{n+}$.
And then based on the fact that the limit of
a sequence of measurable functions remains measurable, the desired $S\in\mathcal{H}_{\infty}^{n+}$ has been created.
\end{remark}

\section{Detectability}\label{Detectability}
This section devotes to dealing with detectability of the unforced system.
We start with the definition of stabilizability.

Consider the MJLSs
$$x(k+1)=A(\vartheta(k))x(k)+G(\vartheta(k))u(k),\ \forall k\in\mathbb{N},$$
or $(A, G|\mathbb{G})$ for short, where $G=\{G(\ell)\}_{\ell\in\Theta}\in\mathcal{H}^{n\times m}_{\infty}$.
\begin{definition}\label{589defstabilizability}
$(A, G|\mathbb{G})$ is said to be exponentially mean-square stabilizable
if there exists $K\in\mathcal{H}_{\infty}^{m\times n}$ such that $(A+GK|\mathbb{G})$ is EMSS.
In this case, we call that $u(k)=K(\vartheta(k))x(k)$ exponentially stabilizes $(A, G|\mathbb{G})$
and the set of all stabilizing feedback gains is denoted by $\mathbf{K}$.
\end{definition}

Denote the following MJLSs by $(A;C|\mathbb{G})$:
$$\left\{
\begin{array}{ll}
x(k+1)=A(\vartheta(k))x(k),\ \\
z(k)=C(\vartheta(k))x(k),\ \forall k\in\mathbb{N}.
\end{array}
\right.$$

\begin{definition}\label{defexde1009}
$(A;C|\mathbb{G})$ is said to be detectable if there exists $H\in\mathcal{H}^{n\times p}_{\infty}$ such that $x(k+1)=[A(\vartheta(k))
+H(\vartheta(k))C(\vartheta(k))]x(k)$
 is EMSS.
\end{definition}

The following theorem presents a criterion for the exponential stability of $(A|\mathbb{G})$
under the assumption that $(A;C|\mathbb{G})$ is detectable.

\begin{theorem}\label{Mthe1495if}
Suppose that $(A;C|\mathbb{G})$ is detectable.
$(A|\mathbb{G})$ is EMSS iff there exists $U\in\mathcal{H}^{n+}_{\infty}$ such that
\begin{equation}\label{1157utaucc}
U(\ell)-\mathcal{T}_{A}(U)(\ell)=C(\ell)^{T}C(\ell)\ \mu\text{-}a.e..
\end{equation}
\end{theorem}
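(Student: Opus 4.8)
The plan is to prove the theorem in two directions, with the forward (necessity) direction being routine and the reverse (sufficiency) direction carrying the real weight.

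For the necessity direction, suppose $(A|\mathbb{G})$ is EMSS. By Theorem \ref{EMSSCiff}, this is equivalent to $r_{\sigma}(\mathcal{T}_{A})<1$. I would then define the candidate solution by the Neumann-type series
\begin{equation*}
U(\ell):=\sum_{k=0}^{\infty}\mathcal{T}_{A}^{k}(C^{T}C)(\ell),
\end{equation*}
where $C^{T}C\in\mathcal{H}_{\infty}^{n+}$ by Assumption \ref{Assumption1}$(i)$. Since $r_{\sigma}(\mathcal{T}_{A})<1$, the operator $(\mathcal{I}-\mathcal{T}_{A})^{-1}$ exists as a bounded operator on $\mathcal{SH}_{\infty}^{n}$ and the series converges in $\|\cdot\|_{\infty}$, so $U\in\mathcal{H}_{\infty}^{n+}$ (positivity is preserved since $\mathcal{T}_{A}$ is a positive operator by Proposition \ref{adjoint}$(ii)$). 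A direct telescoping computation, $U-\mathcal{T}_{A}(U)=C^{T}C$, then verifies \eqref{1157utaucc}. Note that this direction does not even require detectability.

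For the sufficiency direction, I would assume \eqref{1157utaucc} holds for some $U\in\mathcal{H}_{\infty}^{n+}$ and that $(A;C|\mathbb{G})$ is detectable, and aim to conclude EMSS. The natural idea is an energy/summation argument: iterating \eqref{1157utaucc} along trajectories of $(A|\mathbb{G})$ gives a telescoping identity whose adjoint form, via the duality $\langle \mathcal{L}_{A}(V);U\rangle=\langle V;\mathcal{T}_{A}(U)\rangle$ from Proposition \ref{adjoint}$(iii)$, yields
\begin{equation*}
\langle X(0);U\rangle-\langle X(N);U\rangle=\sum_{k=0}^{N-1}\|CX(k)C^{T}\|_{\ldots},
\end{equation*}
more precisely $\sum_{k=0}^{N-1}\langle X(k);C^{T}C\rangle$, where $X(k)$ is the sequence \eqref{495xk=Lk}. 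Since $U\geq 0$, this shows $\sum_{k=0}^{\infty}\langle X(k);C^{T}C\rangle\leq\langle X(0);U\rangle<\infty$, and by Lemma \ref{lemma345XxXxl2} this is equivalent to $\sum_{k=0}^{\infty}\mathbf{E}\{\|C(\vartheta(k))x(k)\|^{2}\}<\infty$ for every initial condition. Thus the output energy of $(A;C|\mathbb{G})$ is finite along all trajectories.

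The main obstacle is converting finite output energy into EMSS, and this is exactly where detectability must be invoked. By Definition \ref{defexde1009} there exists $H\in\mathcal{H}_{\infty}^{n\times p}$ making $(A+HC|\mathbb{G})$ EMSS. The plan is to write $A=(A+HC)-HC$ and view the state of $(A|\mathbb{G})$ as the state of the stable system $(A+HC|\mathbb{G})$ driven by the input $-H(\vartheta(k))\,C(\vartheta(k))x(k)$, whose energy we have just shown to be summable (using $H\in\mathcal{H}_{\infty}$). A discrete-time variation-of-constants estimate, combined with the EMSS bound $\mathbf{E}\{\|x(k)\|^{2}\}\leq\beta\alpha^{k}\|x_{0}\|^{2}$ for $(A+HC|\mathbb{G})$ and the Cauchy-Schwarz-type splitting afforded by Proposition \ref{lemma468L}$(ii)$, should give $\sum_{k=0}^{\infty}\mathbf{E}\{\|x(k)\|^{2}\}<\infty$ for every $(x_{0},\vartheta_{0})$. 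Finally, I would translate summability of $\mathbf{E}\{\|x(k)\|^{2}\}$ back into $r_{\sigma}(\mathcal{L}_{A})<1$, hence EMSS, via Theorem \ref{EMSSCiff}; the delicate points requiring care are maintaining uniformity of the constants over initial conditions and handling all estimates in the $\mu$-$a.e.$ and $\|\cdot\|_{\infty}$ senses proper to the Borel-space setting rather than the finite-sum bounds available in the countable case.
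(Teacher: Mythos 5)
Your overall architecture coincides with the paper's: the necessity direction via the Neumann series $U=\sum_{k}\mathcal{T}_{A}^{k}(C^{T}C)$ is correct (the paper simply cites Theorem 5 of \cite{Xiao2023} for it), and for sufficiency you correctly identify the two pillars --- the Lyapunov identity gives $\sum_{k}\mathbf{E}\{\|C(\vartheta(k))x(k)\|^{2}\}\leq\|U\|_{\infty}\|x_{0}\|^{2}$, and detectability lets you regard $(A|\mathbb{G})$ as the stable system $(A+HC|\mathbb{G})$ driven by the square-summable signal $-HCx$. Your ``variation-of-constants'' step is exactly what the paper carries out at the level of second moments: it dominates $X(k)$ by the comparison sequence $Q(k+1)=\widehat{\mathcal{L}}(Q(k))+\widetilde{\mathcal{L}}(X(k))$ with $\widehat{\mathcal{L}}=(1+\xi^{2})\mathcal{L}_{A+HC}$ and $\widetilde{\mathcal{L}}=(1+1/\xi^{2})\mathcal{L}_{HC}$, using Proposition \ref{lemma468L}$(ii)$, and arrives at the uniform bound $\sum_{k}\|\mathcal{L}_{A}^{k}(X_{0})\|_{1}\leq\Gamma_{0}\|X_{0}\|_{1}$.

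The genuine gap is your last step. Summability of $\mathbf{E}\{\|x(k)\|^{2}\}$ over all initial conditions is \emph{not} one of the equivalent characterizations listed in Theorem \ref{EMSSCiff}, so you cannot ``translate back'' to $r_{\sigma}(\mathcal{L}_{A})<1$ by citing that theorem. What is actually needed is the operator-theoretic fact (Lemma 2.1 of \cite{Przyluski1988}) that $\sum_{k}\|\mathcal{L}_{A}^{k}(V)\|_{1}<\infty$ for \emph{every} $V\in\mathcal{SH}_{1}^{n}$ forces $r_{\sigma}(\mathcal{L}_{A})<1$. Your trajectory estimate only covers elements of $\mathcal{H}_{1}^{n+}$ of the special form $X_{0}(\ell)=x_{0}x_{0}^{T}\nu_{0}(\ell)$, i.e.\ rank-one matrices multiplied by an admissible density, and in the Borel-space setting it is not automatic that the bound extends to arbitrary $V$. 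The paper bridges this by dominating a general $\bar{V}\in\mathcal{H}_{1}^{n+}$ by $\hat{V}(\ell)=\|\bar{V}(\ell)\|I$, decomposing $\hat{V}=\sum_{i=1}^{n}\hat{V}_{i}$ with $\hat{V}_{i}(\ell)=\|\bar{V}(\ell)\|e_{i}e_{i}^{T}$, realizing each $\hat{V}_{i}$ as an admissible $X_{0}^{i}$ with density $\nu_{0}(\ell)=\|\bar{V}(\ell)\|/\|\bar{V}\|_{1}$, and finally splitting a general $V\in\mathcal{SH}_{1}^{n}$ into positive and negative parts. Without this extension (or an equivalent device), the conclusion that $(A|\mathbb{G})$ is EMSS does not follow from your estimates, however uniform the constants are over initial conditions.
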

\begin{proof}
Since Theorem 5 in \cite{Xiao2023} furnished the proof for the ``only if'' part,
it remains to confirm the ``if'' part.
Because of the detectability of $(A;C|\mathbb{G})$,
there exists $H\in\mathcal{H}^{n\times p}_{\infty}$ satisfying $r_{\sigma}\{\mathcal{L}_{A+HC}\}<1$.
Let $\widehat{\mathcal{L}}=(1+\xi^{2})\mathcal{L}_{A+HC}$ and choose a sufficiently small $\xi\neq0$ such that $r_{\sigma}\{\widehat{\mathcal{L}}\}<1$.
By Proposition \ref{lemma468L}, for any $\bar{V}\in \mathcal{H}_{1}^{n+}$, it is true that
\begin{equation}\label{643ahc}
\mathcal{L}_{A}(\bar{V})(\ell)\leq\widehat{\mathcal{L}}(\bar{V})(\ell)
+\widetilde{\mathcal{L}}(\bar{V})(\ell)\ \mu\text{-}a.e.,
\end{equation}
where $\widetilde{\mathcal{L}}=(1+{1}/{\xi^{2}})\mathcal{L}_{HC}$.
Combining this with \eqref{88Xklre},
for a given sequence $\{X(k),k\in\mathbb{N}\}$, where $X(k)\in\mathcal{H}_{1}^{n+}$ is defined by \eqref{BArxX},
it holds that
$$
X(k+1)(\ell)\leq\widehat{\mathcal{L}}(X(k))(\ell)
+\widetilde{\mathcal{L}}(X(k))(\ell)\ \mu\text{-}a.e..
$$
Define $\{Q(k),k\in\mathbb{N}\}$ as follows:
$$
\left\{
\begin{array}{ll}
Q(k+1)(\ell)=\widehat{\mathcal{L}}(Q(k))(\ell)
+\widetilde{\mathcal{L}}(X(k))(\ell),\\
Q(0)(\ell)=X_{0}(\ell),\ \ell\in\Theta.\\
\end{array}
\right.
$$
Inductively, one can deduce that for each $k\in\mathbb{N},$ $Q(k)\in\mathcal{H}^{n+}_{1}$, and
\begin{equation}\label{1216qx-solu}
Q(k+1)(\ell)=\widehat{\mathcal{L}}^{k+1}(X_{0})(\ell)
+\sum_{i=0}^{k}\widehat{\mathcal{L}}^{k-i}[\widetilde{\mathcal{L}}(X(i))](\ell)\ \mu\text{-}a.e..
\end{equation}
We will prove that $X(k)\leq Q(k)$ is valid for each $k\in\mathbb{N}$.
To this end, define $Z(k)=Q(k)-X(k),\ k\in\mathbb{N}$.
$Z(0)\in\mathcal{H}^{n+}_{1}$ is trivially satisfied, and it just needs to show that
$Z(k+1)\in\mathcal{H}^{n+}_{1}$ holds for every $k\in\mathbb{N}$.
It can be calculated that for any $k\in\mathbb{N}$,
$Z(k+1)(\ell)=\widehat{\mathcal{L}}(Z(k))(\ell)+\underline{\mathcal{L}}(X(k))(\ell)\ \mu\text{-}a.e.,$
where $\widehat{\mathcal{L}}$ and $\underline{\mathcal{L}}=\widehat{\mathcal{L}}+\widetilde{\mathcal{L}}-\mathcal{L}_{A}$
are positive operators on $\mathcal{SH}^{n}_{1}$ by $(i)$ of Proposition \ref{adjoint} and \eqref{643ahc}.
Via induction, for each $k\in\mathbb{N}$,
\begin{equation*}\label{1071zksemipo}
Z(k+1)(\ell)=\widehat{\mathcal{L}}^{k+1}(Z(0))(\ell)
+\sum_{i=0}^{k}\widehat{\mathcal{L}}^{k-i}[\underline{\mathcal{L}}(X(i))](\ell)\ \mu\text{-}a.e..
\end{equation*}
Further, $Z(k+1)\in\mathcal{H}^{n+}_{1}$,
which means that $X(k+1)\leq Q(k+1)$, $\forall k\in\mathbb{N}$.
Invoking \eqref{1216qx-solu}, one can obtain that for each $k\in\mathbb{N}$,
\begin{equation}\label{1213xQjkkkk}
\begin{aligned}
\|X(k+1)\|_{1}
&\leq \|Q(k+1)\|_{1}\\
&\leq  \|\widehat{\mathcal{L}}^{k+1}(X_{0})\|_{1}
+\sum_{i=0}^{k}\|\widehat{\mathcal{L}}^{k-i}\|
\|\widetilde{\mathcal{L}}(X(i))\|_{1}.
\end{aligned}
\end{equation}
In view of $r_{\sigma}(\widehat{\mathcal{L}})<1$, we deduce that
there exist $\beta \geq 1$, $\alpha \in(0,1)$ such that
$\|\widehat{\mathcal{L}}^{k}\| \leq \beta \alpha^{k}$ holds for each $k\in{\mathbb{N}}$.
$\|\widetilde{\mathcal{L}}(X(i)\|_{1}\leq \Gamma \|CX(i)C^{T}\|_{1}$ is then proved by using Fubini's theorem,
where $\Gamma=(1+{1}/{\xi^{2}})\|H\|_{\infty}^{2}$.
Hence, according to Lemma \ref{lemma345XxXxl2}, we get that
$\|\widetilde{\mathcal{L}}(X(i))\|_{1}\leq\Gamma\mathbf{E}\{\|C(\vartheta(i))x(i)\|^{2}\}.$
Summing \eqref{1213xQjkkkk} over $k$ yields that
\begin{equation}\label{1253sumxk}
\begin{aligned}
\sum_{k=0}^{\infty}\|X(k)\|_{1}\leq \frac{\beta}{1-\alpha}\{\|X_{0}\|_{1}
+\Gamma\sum_{i=0}^{\infty}\mathbf{E}[\|C(\vartheta(i))x(i)\|^{2}]\}.
\end{aligned}
\end{equation}
Under the hypothesis that there exists $U\in\mathcal{H}^{n+}_{\infty}$ satisfying \eqref{1157utaucc},
one can derive that
\begin{equation*}
\begin{split}
\mathbf{E}\{x_{0}^{T}U(\vartheta_{0})x_{0}\}
=&\sum_{i=0}^{k}\mathbf{E}\{x(i)^{T}C(\vartheta(i))^{T}C(\vartheta(i))x(i)\}\\
&+\mathbf{E}\{x(k+1)^{T}U(\vartheta(k+1))x(k+1)\}.
\end{split}
\end{equation*}
Taking the limits for $k\rightarrow\infty$, it leads to
\begin{equation*}
\begin{split}
\sum_{i=0}^{\infty}\mathbf{E}\{\|C(\vartheta(i))x(i)\|^{2}\}&\leq\mathbf{E}\{x_{0}^{T}U(\vartheta_{0})x_{0}\}\\
&\leq\eta\|x_{0}\|^{2}\leq n\eta\|X_{0}\|_{1},
\end{split}
\end{equation*}
where $\eta=\|U\|_{\infty}$.
From this, \eqref{1253sumxk}, and \eqref{495xk=Lk},
it arrives at a conclusion that for any  $X_{0}\in\mathcal{H}_{1}^{n+}$ defined by $X_{0}(\ell)=x_{0}x_{0}^{T}\nu_{0}(\ell)$,
\begin{equation}\label{1284xxx0}
\sum_{k=0}^{\infty}\|\mathcal{L}_{A}^{k}(X_{0})\|_{1}\leq\Gamma_{0}\|X_{0}\|_{1},
\end{equation}
where $\Gamma_{0}=\frac{\beta}{1-\alpha}(1+n\eta\Gamma)>0$.
Next, we will show that for any $\bar{V} \in \mathcal{H}_{1}^{n+}$,
$\sum_{k=0}^{\infty}\|\mathcal{L}^{k}_{A}(\bar{V})\|_{1}<\infty$.
If $\|\bar{V}\|_{1}=0$, it is clear that $\sum_{k=0}^{\infty}\|\mathcal{L}^{k}_{A}(\bar{V})\|_{1}<\infty$.
Consider the case that $\|\bar{V}\|_{1}\neq 0$.
Define $\hat{V}\in\mathcal{H}_{1}^{n+}$ with $\hat{V}(\ell)=\|\bar{V}(\ell)\|I$.
It follows that $\bar{V}\leq \hat{V}$ and for every $k\in\mathbb{N}$,
$\|\mathcal{L}_{A}^{k}(\bar{V})\|_{1} \leq\|\mathcal{L}_{A}^{k}(\hat{V})\|_{1}$.
For each $i\in \overline{1,n}$,
define the $i$-th unit vector $e_{i}\in \mathbb{R}^{n}$ with all components being 0 except for the $i$-th component, which is 1.
Note that $\hat{V}$ can be written as $\hat{V}=\sum_{i=1}^{n} \hat{V}_{i}$ with $\hat{V}_{i}=\{\hat{V}_{i}(\ell)\}_{\ell\in\Theta}$,
where $\hat{V}_{i}(\ell)=\|\bar{V}(\ell)\|e_{i}e_{i}^{T}$.
By the linearity of $\mathcal{L}_{A}$, we have that for each $k\in\mathbb{N}$,
\begin{equation}\label{1529KAHATPII}
\|\mathcal{L}_{A}^{k}(\hat{V})\|_{1}\leq \sum_{i=1}^{n}\|\mathcal{L}_{A}^{k}(\hat{V}_{i})\|_{1}.
\end{equation}
Clearly, for each $i\in \overline{1,n}$, $\|\hat{V}_{i}\|_{1}=\|\bar{V}\|_{1}$.
In addition, there exist $\nu_{0}$ with ${\nu}_{0}(\ell)=\|\bar{V}(\ell)\|/\|\bar{V}\|_{1}$ satisfying Assumption \ref{Assumption1},
and an initial state $x_{0}^{i}=\|\bar{V}\|_{1}^{\frac{1}{2}}e_{i}\in\mathbb{R}^{n}$
such that $\hat{V}_{i}=X_{0}^{i}$ with $X_{0}^{i}(\ell)=x_{0}^{i} (x_{0}^{i})^{T}\nu_{0}(\ell).$
Then, we can rewrite \eqref{1529KAHATPII} as
\begin{equation}\label{1535LAkP1IN}
\|\mathcal{L}_{A}^{k}(\hat{V})\|_{1}\leq \sum_{i=1}^{n}\|\mathcal{L}_{A}^{k}(X_{0}^{i})\|_{1}
\end{equation}
and get that $\|X_{0}^{i}\|_{1}=\|\hat{V}_{i}\|_{1}=\|\bar{V}\|_{1}$.
Additionally, it can be deduced from \eqref{1284xxx0} that for each $i\in \overline{1,n}$,
there exist $\beta \geq 1$, $\alpha \in(0,1)$ such that
$\sum_{k=0}^{\infty}\|\mathcal{L}_{A}^{k}(X_{0}^{i})\|_{1}\leq\Gamma_{0}\|\bar{V}\|_{1}.$
This, together with \eqref{1535LAkP1IN}, reveals that
$\sum_{k=0}^{\infty}\|\mathcal{L}_{A}^{k}(\hat{V})\|_{1} \leq n\Gamma_{0}\|\bar{V}\|_{1}$.
Noticing the fact that $\|\mathcal{L}_{A}^{k}(\bar{V})\|_{1} \leq\|\mathcal{L}_{A}^{k}(\hat{V})\|_{1}$,
it can be drawn that for any $\bar{V} \in \mathcal{H}_{1}^{n+}$,
$\sum_{k=0}^{\infty}\|\mathcal{L}_{A}^{k}(\bar{V})\|_{1} \leq n\Gamma_{0}\|\bar{V}\|_{1}.$
By Proposition 2 in \cite{Xiao2023}, for any $V\in\mathcal{SH}^{n}_{1}$,
there exist $V^{+}$, $V^{-}\in \mathcal{H}_{1}^{n+}$ such that
$V=V^{+}-V^{-}$, $\max\{\|V^{+}\|_{1},\|V^{-}\|_{1}\}\leq \|V\|_{1}$, and
\begin{equation*}\label{1523pp=-p-}
\sum_{k=0}^{\infty}\|\mathcal{L}^{k}_{A}(V)\|_{1} \leq \sum_{k=0}^{\infty}\|\mathcal{L}^{k}_{A}(V^{+})\|_{1}
+\sum_{k=0}^{\infty}\|\mathcal{L}^{k}_{A}(V^{-})\|_{1}.
\end{equation*}
Therefore, for any $V\in\mathcal{SH}^{n}_{1}$,
$\sum_{k=0}^{\infty}\|\mathcal{L}_{A}^{k}(V)\|_{1}\leq2n\Gamma_{0}\|V\|_{1}$.
Applying $(b)\Leftrightarrow(c)$ of Lemma 2.1 in \cite{Przyluski1988} and Theorem \ref{EMSSCiff},
we assure that $(A|\mathbb{G})$ is EMSS, which ends the proof.
\end{proof}

\section{The Maximal/Stabilizing Solution of coupled-AREs}\label{maxstaGCARE}
In this section, we will concentrate on the maximal solution and the stabilizing solution of
coupled-AREs associated with the LQ optimization problem for MJLSs.

In what follows, for any $P\in\mathcal{SH}_{\infty}^{n}$, $Q\in\mathcal{SH}^{n}_{\infty}$, $R\in{\mathcal{H}_{\infty}^{m+*}}$, and $\ell\in\Theta$,
define
\begin{align}
&\mathcal{G}(P)(\ell):=G(\ell)^{T} \mathcal{E}(P)(\ell)A(\ell),\nonumber \\
&\mathcal{R}(P)(\ell):=R(\ell)+ \mathcal{T}_{G}(P)(\ell),\nonumber \\ &\mathcal{M}(P)(\ell):=-\mathcal{R}(P)(\ell)^{-1}
\mathcal{G}(P)(\ell),\nonumber \\
&\mathcal{W}(P)(\ell):=-P(\ell)+\mathcal{T}_{A}(P)(\ell)+Q(\ell)
\nonumber \\
&\ \ \ \ \ \ \ \ \ \ \ \ \ \ \ -\mathcal{G}(P)(\ell)^{T}\mathcal{R}(P)(\ell)^{-1}\mathcal{G}(P)(\ell),\nonumber
\end{align}
and
\begin{align}
&\mathbf{S}_{+}:=\{P\in\mathcal{SH}_{\infty}^{n}| \mathcal{R}(P)(\ell)\gg 0 \  \mu\text{-}a.e.\nonumber \\
&\ \ \ \ \ \ \ \ \ \ \ \ \ \ \ \ \ \ \ \ \ \ \ \ \ \ \ \ \ \ \ \ \text{and}\ \mathcal{W}(P)(\ell)\geq 0\  \mu\text{-}a.e.\}.\nonumber
\end{align}
It can be easily checked that $\mathcal{G}\in\mathbf{B}(\mathcal{SH}_{\infty}^{n},\mathcal{H}_{\infty}^{m\times n})$, $\mathcal{R}\in\mathbf{B}(\mathcal{SH}_{\infty}^{n}, \mathcal{SH}_{\infty}^{m})$, $\mathcal{M}\in\mathbf{B}(\mathcal{SH}_{\infty}^{n},\mathcal{H}_{\infty}^{m\times n})$,
and $\mathcal{W}\in\mathbf{B}(\mathcal{SH}_{\infty}^{n}, \mathcal{SH}_{\infty}^{n})$.

Consider the following coupled-AREs:
\begin{equation}\label{GCARE1}
P(\ell)=\mathcal{T}_{A}(P)(\ell)+Q(\ell)
-\mathcal{G}(P)(\ell)^{T}\mathcal{R}(P)(\ell)^{-1}\mathcal{G}(P)(\ell)
\end{equation}
with the sign condition
\begin{equation}\label{SignCon771}
\mathcal{R}(P)(\ell)\gg 0\ \mu\text{-}a.e..
\end{equation}

The following definitions generalize the well-known concepts of the maximal solution and the stabilizing solution of Riccati equations.
\begin{definition}\label{solutionmaxst993}
$(i)$~A solution $P_{max}\in \mathcal{SH}_{\infty}^{n}$ is said to be the maximal solution of coupled-AREs \eqref{GCARE1} over $\mathbf{S}_{+}$
if it satisfies $P\leq P_{max}$ for any $P\in{\mathbf{S}_{+}};$

$(ii)$~A solution $P_{st}\in\mathcal{SH}_{\infty}^{n}$ is said to be the stabilizing solution of coupled-AREs \eqref{GCARE1}
if $u(k)=\mathcal{M}(P_{st})(\vartheta(k)) x(k)$, $k\in\mathbb{N}$ exponentially stabilizes $(A, G|\mathbb{G})$.
\end{definition}

The next lemma is an immediate consequence of Theorem 2.5 in \cite{BookDragan2010}.
\begin{lemma}\label{1116exikx}
If $(A, G|\mathbb{G})$ is exponentially mean-square stabilizable,
then there exists $K\in\mathcal{H}_{\infty}^{m\times n}$ satisfying $K\in\mathbf{K}$.
Moreover, for any $Q\in\mathcal{SH}^{n}_{\infty}$ and $R\in{\mathcal{H}_{\infty}^{m+*}}$,
there exists a unique $P\in{\mathcal{SH}_{\infty}^{n}}$ such that
\begin{equation}\label{815ni}
P(\ell)\!-\!\mathcal{T}_{A+GK}(P)(\ell)\!=\!Q(\ell)\!+\!K(\ell)^{T}R(\ell)K(\ell)\ \mu\text{-}a.e..
\end{equation}
\end{lemma}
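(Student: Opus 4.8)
The plan is as follows. The first assertion is merely an unwinding of Definition \ref{589defstabilizability}: exponential mean-square stabilizability of $(A,G|\mathbb{G})$ means precisely that some $K\in\mathcal{H}_{\infty}^{m\times n}$ renders $(A+GK|\mathbb{G})$ EMSS, and such a $K$ is by definition an element of $\mathbf{K}$; I would dispatch this in one line. The substance is the second assertion, which I would recast as the generalized Lyapunov equation $(\mathrm{Id}-\mathcal{T}_{A+GK})(P)=Q+K^{T}RK$ on the Banach space $\mathcal{SH}_{\infty}^{n}$, where $\mathrm{Id}$ denotes the identity operator.

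First I would fix $K\in\mathbf{K}$, so that $(A+GK|\mathbb{G})$ is EMSS. By the equivalence $(i)\Leftrightarrow(iv)$ of Theorem \ref{EMSSCiff}, this yields $r_{\sigma}(\mathcal{T}_{A+GK})<1$. Consequently $1\notin\sigma(\mathcal{T}_{A+GK})$, so $\mathrm{Id}-\mathcal{T}_{A+GK}$ is boundedly invertible on $\mathcal{SH}_{\infty}^{n}$ with inverse the Neumann series $\sum_{k=0}^{\infty}\mathcal{T}_{A+GK}^{k}$. I would then take
$$P:=\sum_{k=0}^{\infty}\mathcal{T}_{A+GK}^{k}(Q+K^{T}RK)$$
as the candidate solution and verify the three required properties. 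The forcing term lies in $\mathcal{SH}_{\infty}^{n}$: $Q\in\mathcal{SH}_{\infty}^{n}$ by hypothesis, while $K\in\mathcal{H}_{\infty}^{m\times n}$ and $R\in\mathcal{H}_{\infty}^{m+*}$ force $K^{T}RK$ to be a symmetric, essentially bounded matrix-valued function. Since $\mathcal{T}_{A+GK}\in\mathbf{B}(\mathcal{H}_{\infty}^{n\times n})$ is a positive operator on $\mathcal{SH}_{\infty}^{n}$ by $(ii)$ of Proposition \ref{adjoint}, and $r_{\sigma}(\mathcal{T}_{A+GK})<1$ furnishes $\beta\geq1$, $\alpha\in(0,1)$ with $\|\mathcal{T}_{A+GK}^{k}\|\leq\beta\alpha^{k}$, the series converges absolutely in the complete space $(\mathcal{SH}_{\infty}^{n},\|\cdot\|_{\infty})$, so $P\in\mathcal{SH}_{\infty}^{n}$. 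Applying $\mathcal{T}_{A+GK}$ term-by-term and reindexing gives $\mathcal{T}_{A+GK}(P)=P-(Q+K^{T}RK)$, which is the equation; uniqueness follows from injectivity of $\mathrm{Id}-\mathcal{T}_{A+GK}$, since any difference $P_{1}-P_{2}$ of two solutions is a fixed point of $\mathcal{T}_{A+GK}$ and hence zero.

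The step needing the most care — and the one that distinguishes the Borel setting from the finite/countable case treated in Theorem 2.5 of \cite{BookDragan2010} — is confirming that $P$ genuinely belongs to $\mathcal{SH}_{\infty}^{n}$ and that the identity holds in the $\mu$-a.e. sense adopted throughout the paper. Measurability is inherited because each partial sum is measurable and a pointwise limit of measurable functions stays measurable; essential boundedness comes from the geometric estimate $\sum_{k}\|\mathcal{T}_{A+GK}^{k}(Q+K^{T}RK)\|_{\infty}\leq \|Q+K^{T}RK\|_{\infty}\,\beta/(1-\alpha)$. Finally, absolute convergence in $\|\cdot\|_{\infty}$ forces $\sum_{k}\|\mathcal{T}_{A+GK}^{k}(Q+K^{T}RK)(\ell)\|$ to converge for $\mu$-almost all $\ell$, so the operator identity descends to the claimed pointwise $\mu$-a.e. equation, exactly matching the conventions fixed earlier in the paper.
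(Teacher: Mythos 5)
Your argument is correct and is essentially the paper's own: the paper simply cites Theorem 2.5 of \cite{BookDragan2010}, which is precisely the observation that EMSS of $(A+GK|\mathbb{G})$ gives $r_{\sigma}(\mathcal{T}_{A+GK})<1$ (via Theorem \ref{EMSSCiff}), so that $\mathrm{Id}-\mathcal{T}_{A+GK}$ is boundedly invertible on $\mathcal{SH}_{\infty}^{n}$ and the unique solution is the Neumann series you write down. Your additional care about measurability and the $\mu$-a.e.\ interpretation is consistent with the paper's conventions and does not change the route.
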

For clarity, given $K\in\mathbf{K}$, we denote the unique $P\in{\mathcal{SH}^{n}_{\infty}}$ satisfying \eqref{815ni} by $P_K$.

The following theorem presents a sufficient condition for the existence of the maximal solution of coupled-AREs \eqref{GCARE1} over $\mathbf{S}_+$.
\begin{theorem}\label{1145maxso}
Suppose that $(A, G|\mathbb{G})$ is exponentially mean-square stabilizable.
If $\mathbf{S}_+\neq\emptyset$,
then there exists a maximal solution of coupled-AREs \eqref{GCARE1} satisfying the sign condition \eqref{SignCon771}.
\end{theorem}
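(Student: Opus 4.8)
The plan is to construct $P_{max}$ by a Newton-type monotone iteration anchored at a fixed element of $\mathbf{S}_+$, and to isolate the preservation of the stabilizing property along the iteration as the main difficulty. The computational backbone is the completion-of-squares identity: for every $P\in\mathcal{SH}_{\infty}^{n}$ with $\mathcal{R}(P)(\ell)\gg 0$ and every $K\in\mathcal{H}_{\infty}^{m\times n}$,
\begin{equation*}
\mathcal{T}_{A+GK}(P)+Q+K^{T}RK-P=\mathcal{W}(P)+(K-\mathcal{M}(P))^{T}\mathcal{R}(P)(K-\mathcal{M}(P))\ \mu\text{-}a.e.,
\end{equation*}
which one verifies by expanding $\mathcal{T}_{A+GK}$ and inserting $\mathcal{M}(P)=-\mathcal{R}(P)^{-1}\mathcal{G}(P)$; I will denote its left-hand side by $\mathcal{N}(P,K)$, noting that $\mathcal{N}(P,K)$ is affine in $P$ and that $\mathcal{N}(P_K,K)=0$ by \eqref{815ni}. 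Since $\mathbf{S}_{+}\neq\emptyset$ I fix $\widetilde{P}\in\mathbf{S}_{+}$, and since $(A,G|\mathbb{G})$ is stabilizable I fix $K_{0}\in\mathbf{K}$ via Lemma \ref{1116exikx}. I then set $P_{0}:=P_{K_{0}}$ and iterate $K_{j+1}:=\mathcal{M}(P_{j})$, $P_{j+1}:=P_{K_{j+1}}$, the iteration being well posed precisely when each $K_{j+1}$ is shown to be stabilizing.

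The routine half is the ordering and the sign condition, which rest on the positivity of the resolvent $(\mathcal{I}-\mathcal{T}_{A+GK})^{-1}=\sum_{k}\mathcal{T}_{A+GK}^{k}$, valid whenever $K\in\mathbf{K}$. Because $\widetilde{P}\in\mathbf{S}_{+}$ gives $\mathcal{N}(\widetilde{P},K)\geq 0$ for all $K$, comparing $\mathcal{N}(\widetilde{P},K_{0})\geq 0=\mathcal{N}(P_{0},K_{0})$ and applying the positive resolvent of $K_{0}$ yields $\widetilde{P}\leq P_{0}$; monotonicity of $P\mapsto\mathcal{R}(P)=R+\mathcal{T}_{G}(P)$ then gives $\mathcal{R}(P_{0})\geq\mathcal{R}(\widetilde{P})\gg 0$, so $\mathcal{M}(P_{0})$ is defined. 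Inductively, once $K_{j+1}\in\mathbf{K}$ is known, subtracting $\mathcal{N}(\widetilde{P},K_{j+1})\geq 0$ and $\mathcal{W}(P_{j})=-(K_{j}-K_{j+1})^{T}\mathcal{R}(P_{j})(K_{j}-K_{j+1})\leq 0$ from $\mathcal{N}(P_{j+1},K_{j+1})=0$ and using resolvent positivity produces $\widetilde{P}\leq P_{j+1}\leq P_{j}$, whence again $\mathcal{R}(P_{j+1})\geq\mathcal{R}(\widetilde{P})\gg 0$. Thus the only genuine content of the induction is the step $K_{j}\in\mathbf{K}\Rightarrow K_{j+1}\in\mathbf{K}$.

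The hard part will be this stabilizing step, and I would attack it by a Perron--Frobenius argument. Subtracting $\mathcal{N}(\widetilde{P},K_{j+1})\geq 0$ from the expression for $\mathcal{N}(P_{j},K_{j+1})=\mathcal{W}(P_{j})$ and writing $W_{j}:=P_{j}-\widetilde{P}\geq 0$ gives the dissipation estimate
\begin{equation*}
W_{j}-\mathcal{T}_{A+GK_{j+1}}(W_{j})\geq (K_{j}-K_{j+1})^{T}\mathcal{R}(P_{j})(K_{j}-K_{j+1})\geq 0\ \mu\text{-}a.e..
\end{equation*}
Suppose $r_{\sigma}(\mathcal{L}_{A+GK_{j+1}})\geq 1$; by a Krein--Rutman type result for the positive operator $\mathcal{L}_{A+GK_{j+1}}$ on the cone $\mathcal{H}_{1}^{n+}$ there is a nonzero $\Phi\geq 0$ with $\mathcal{L}_{A+GK_{j+1}}(\Phi)=\lambda\Phi$, $\lambda\geq 1$. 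Pairing the estimate with $\Phi$ through $\langle\mathcal{L}_{Q}(V);U\rangle=\langle V;\mathcal{T}_{Q}(U)\rangle$ of Proposition \ref{adjoint} gives $(1-\lambda)\langle\Phi;W_{j}\rangle\geq\langle\Phi;(K_{j}-K_{j+1})^{T}\mathcal{R}(P_{j})(K_{j}-K_{j+1})\rangle\geq 0$, where the left side is $\leq 0$; hence both sides vanish, and since $\mathcal{R}(P_{j})\gg 0$ the matrix $K_{j}-K_{j+1}$ must vanish on $\{\Phi\neq 0\}$, so $\mathcal{L}_{A+GK_{j}}(\Phi)=\lambda\Phi$ with $\lambda\geq 1$, contradicting $r_{\sigma}(\mathcal{L}_{A+GK_{j}})<1$. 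Therefore $K_{j+1}\in\mathbf{K}$; the delicate point that I expect to require the most care is the positivity/Krein--Rutman input on the infinite-dimensional space $\mathcal{SH}_{1}^{n}$, and the accompanying measurability and integrability bookkeeping.

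To finish, the chain $\widetilde{P}\leq P_{j+1}\leq P_{j}\leq P_{0}$ shows $\{P_{j}(\ell)\}$ is nonincreasing and bounded below $\mu$-$a.e.$, so it converges pointwise to a measurable $P_{*}$ with $\widetilde{P}\leq P_{*}\leq P_{0}$, hence $P_{*}\in\mathcal{SH}_{\infty}^{n}$. Passing to the limit in \eqref{GCARE1} is legitimate by dominated convergence in the integral operator $\mathcal{E}$ (the bound $\|P_{j}\|_{\infty}\leq\|P_{0}\|_{\infty}$) together with $\mathcal{R}(P_{*})\geq\mathcal{R}(\widetilde{P})\gg 0$, which keeps $\mathcal{R}(\cdot)^{-1}$ continuous; thus $P_{*}$ solves \eqref{GCARE1} and satisfies the sign condition \eqref{SignCon771}, so in particular $\mathcal{W}(P_{*})=0$ and $P_{*}\in\mathbf{S}_{+}$. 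Maximality then follows from the same comparison as before: for any $P\in\mathbf{S}_{+}$ one has $\mathcal{N}(P,K_{j})\geq 0=\mathcal{N}(P_{j},K_{j})$, so resolvent positivity gives $P\leq P_{j}$ for every $j$ and hence $P\leq P_{*}$, identifying $P_{max}:=P_{*}$ as the maximal solution of \eqref{GCARE1} over $\mathbf{S}_{+}$.
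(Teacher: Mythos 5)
Your overall architecture is the same as the paper's: a Kleinman/Newton iteration $K_{j+1}=\mathcal{M}(P_j)$, $P_{j+1}=P_{K_{j+1}}$ anchored below by a fixed $\widetilde{P}\in\mathbf{S}_+$, monotone decrease, and passage to the limit. The ordering argument via positivity of $(\mathcal{I}-\mathcal{T}_{A+GK})^{-1}=\sum_k\mathcal{T}_{A+GK}^k$, the sign condition via $\mathcal{R}(P_j)\geq\mathcal{R}(\widetilde{P})\gg 0$, and the limit/maximality steps are all sound. You also correctly identify the one genuinely hard step: showing $K_j\in\mathbf{K}\Rightarrow K_{j+1}\in\mathbf{K}$.

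That step, however, is where your proof has a real gap. You invoke ``a Krein--Rutman type result'' to produce a nonzero $\Phi\in\mathcal{H}_1^{n+}$ with $\mathcal{L}_{A+GK_{j+1}}(\Phi)=\lambda\Phi$ and $\lambda=r_\sigma(\mathcal{L}_{A+GK_{j+1}})\geq 1$. In the Borel-space setting $\mathcal{L}_{A+GK_{j+1}}$ is a positive bounded operator on the infinite-dimensional Banach space $\mathcal{SH}_1^n$; the classical fact that $r_\sigma$ belongs to the spectrum of a positive operator does not give you an eigenvector, and Krein--Rutman requires compactness (or quasi-compactness/irreducibility), none of which is assumed here --- $g$ is only a measurable density and $\mu$ is only $\sigma$-finite, so $\mathcal{L}_{A+GK_{j+1}}$ need not be compact. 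Without the eigenvector the pairing argument collapses, and the weaker statement you actually have --- $W_j\geq 0$ with $W_j-\mathcal{T}_{A+GK_{j+1}}(W_j)\geq 0$ --- does not by itself imply stability. This is precisely the hole that the paper's Theorem \ref{Mthe1495if} was built to fill: your dissipation relation is in fact an exact Lyapunov \emph{equation} $W_j-\mathcal{T}_{A+GK_{j+1}}(W_j)=C_1^TC_1$ with $C_1$ stacking $\mathcal{W}(\widetilde P)^{1/2}$, $\mathcal{R}(\widetilde P)^{1/2}(K_{j+1}-\mathcal{M}(\widetilde P))$ and $\mathcal{R}(P_j)^{1/2}(K_{j+1}-K_j)$ (square roots supplied measurably by Lemma \ref{semimeas864}), and the pair $(A+GK_{j+1};C_1|\mathbb{G})$ is detectable because injecting $-G\mathcal{R}(P_j)^{-1/2}$ into the last block recovers the stable $A+GK_j$. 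Applying Theorem \ref{Mthe1495if} to this equation with the positive semidefinite solution $W_j$ yields EMSS of $(A+GK_{j+1}|\mathbb{G})$ directly, with no spectral input. Replacing your Perron--Frobenius contradiction by this detectability argument closes the gap and makes the rest of your proof go through.
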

\begin{proof}
According to Lemma \ref{1116exikx}, there exists $K\in\mathcal{H}_{\infty}^{m\times n}$ satisfying $K\in\mathbf{K}$.
Now we inductively prove that for any $h\in\mathbb{N}$,
there exist $P_{h}\in{\mathcal{SH}_{\infty}^{n}}$ and $K_{h}\in\mathcal{H}_{\infty}^{m\times n}$ such that the following hold:

$(i)$~For any $P\in\mathbf{S}_{+}$, $P\leq P_{h}\leq P_{h-1}\leq \cdots\leq  P_{1}\leq P_{0}$;

$(ii)$~$u(k)=K_{h}(\vartheta(k))x(k),\ k\in\mathbb{N}$ exponentially stabilizes $(A, G|\mathbb{G})$, where
\begin{equation}\label{KK946}
K_{h}:=\left\{
\begin{array}{ll}
K,\ \ \ \ \ \ \ \ \ \  h=0, \\
\mathcal{M}(P_{h-1}),\ h\in\mathbb{N}^{+};\\
\end{array}
\right.
\end{equation}

$(iii)$~$P_{h}=P_{K_{h}}$.\\
Set $K_{0}=K$.
By Lemma \ref{1116exikx}, there exists a unique $P_{K_{0}}\in{\mathcal{SH}_{\infty}^{n}}$ satisfying \eqref{815ni}.
Let $P_{0}=P_{K_{0}}$ and $\bar{K}=\mathcal{M}(P)$.
Since for any $P\in\mathbf{S}_{+}$,
\begin{equation*}
\begin{split}
&P(\ell)-\mathcal{T}_{A+GK_{0}}(P)(\ell)=Q(\ell)+K_{0}(\ell)^{T}R(\ell)K_{0}(\ell)\\
&-\mathcal{W}(P)(\ell)-[K_{0}(\ell)-\bar{K}(\ell)]^{T}\mathcal{R}(P)(\ell)[K_{0}(\ell)-\bar{K}(\ell)],
\end{split}
\end{equation*}
it can be calculated that
\begin{equation*}
\begin{split}
&[P_{0}(\ell)-P(\ell)]-\mathcal{T}_{A+GK_{0}}[P_{0}(\ell)-P(\ell)]\\
=&\mathcal{W}(P)(\ell)+[K_{0}(\ell)-\bar{K}(\ell)]^{T}\mathcal{R}(P)(\ell)[K_{0}(\ell)-\bar{K}(\ell)],
\end{split}
\end{equation*}
which yields by Theorem \ref{Mthe1495if} that $P\leq P_{0}$.
Thus, $(i)$-$(iii)$ hold for $h=0$.

Assume that $(i)$-$(iii)$ are valid for $h-1$.
Let $K_{h}=\mathcal{M}(P_{h-1})$.
For any $P\in{\mathbf{S}_{+}}$, from $P\leq P_{h-1}$, we have that $\mathcal{R}(P_{h-1})\in\mathcal{H}_{\infty}^{m+*}$.
Applying Lemma \ref{semimeas864}, it can be ensured that there exist
$\mathcal{W}(P)^{\frac{1}{2}}\in\mathcal{H}_{\infty}^{n+}$,
$\mathcal{R}(P)^{\frac{1}{2}}\in\mathcal{H}_{\infty}^{m+*}$,
and $\mathcal{R}(P_{h-1})^{\frac{1}{2}}\in\mathcal{H}_{\infty}^{m+*}$ such that
$\mathcal{W}(P)(\ell)=[\mathcal{W}(P)^{\frac{1}{2}}(\ell)]^{2}$,
$\mathcal{R}(P)(\ell)=[\mathcal{R}(P)^{\frac{1}{2}}(\ell)]^{2}$, and
$\mathcal{R}(P_{h-1})(\ell)=[\mathcal{R}(P_{h-1})^{\frac{1}{2}}(\ell)]^{2}\ \mu\text{-}a.e..$
Because $u(k)=K_{h-1}(\vartheta(k))x(k)$ exponentially stabilizes $(A, G|\mathbb{G})$,
it follows that $(A+GK_{h};\mathcal{R}(P_{h-1})^{\frac{1}{2}}(K_{h}-K_{h-1})|\mathbb{G})$ is detectable.
Then, $(A+GK_{h};C_{1}|\mathbb{G})$ is detectable, where
$$C_{1}(\ell)=\left[
               \begin{array}{c}
                \mathcal{W}(P)^{\frac{1}{2}}(\ell) \\
                 {\mathcal{R}(P)}^{\frac{1}{2}}(\ell)
                 [K_{h}(\ell)-\bar{K}(\ell)]\\
                 {\mathcal{R}(P_{h-1})}^{\frac{1}{2}}(\ell)
                 [K_{h}(\ell)-K_{h-1}(\ell)] \\
               \end{array}
             \right].$$
On the other hand, from
\begin{equation*}
\begin{split}
&P_{h-1}(\ell)-\mathcal{T}_{A+GK_{h}}(P_{h-1})(\ell)=Q(\ell)+K_{h}(\ell)^{T}R(\ell) K_{h}(\ell)\\
&+[K_{h}(\ell)-K_{h-1}(\ell)]^{T}\mathcal{R}(P_{h-1})(\ell)[K_{h}(\ell)-K_{h-1}(\ell)]
\end{split}
\end{equation*}
and
\begin{equation}\label{1204ppp}
\begin{split}
&P(\ell)\!-\!\mathcal{T}_{A+GK_{h}}(P)(\ell)\!=\!Q(\ell)\!+\!K_{h}(\ell)^{T}R(\ell)K_{h}(\ell)\\
&\ \ \ \ \ \!-\!\mathcal{W}(P)(\ell)-[K_{h}(\ell)\!-\!\bar{K}(\ell)]^{T}\mathcal{R}(P)(\ell)[K_{h}(\ell)\!-\!\bar{K}(\ell)],
\end{split}
\end{equation}
it is made that
\begin{equation}\label{1208pqq}
[P_{h-1}(\ell)-P(\ell)]-\mathcal{T}_{A+GK_{h}}[P_{h-1}(\ell)-P(\ell)]=C_{1}(\ell)^{T}C_{1}(\ell).
\end{equation}
In view of $P\leq P_{h-1}$, and using Theorem \ref{Mthe1495if},
we know that $u(k)=K_{h}(\vartheta(k)) x(k)$ exponentially stabilizes $(A, G|\mathbb{G})$.
Let $P_{h}=P_{K_{h}}$, or equivalently,
\begin{equation}\label{1229PKLIM}
P_{h}(\ell)-\mathcal{T}_{A+GK_{h}}(P_{h})(\ell)=Q(\ell)+K_{h}(\ell)^{T}R(\ell)K_{h}(\ell).
\end{equation}
Noting that \eqref{1229PKLIM} minus \eqref{1204ppp} leads to
\begin{equation}\label{1233pkekkk}
\begin{split}
[P_{h}(\ell)-P(\ell)]-\mathcal{T}_{A+GK_{h}}[P_{h}(\ell)-P(\ell)]=C_{2}(\ell)^{T}C_{2}(\ell),
\end{split}
\end{equation}
where
$$C_{2}(\ell)=\left[
               \begin{array}{c}
                \mathcal{W}(P)^{\frac{1}{2}}(\ell) \\
                 {\mathcal{R}(P)}^{\frac{1}{2}}(\ell)[K_{h}(\ell)-\bar{K}(\ell)]\\
               \end{array}
             \right],$$
we conclude by Theorem \ref{Mthe1495if} that $P\leq P_{h}$.
Similarly, subtracting \eqref{1233pkekkk} from \eqref{1208pqq} gives
\begin{equation*}
\begin{split}
&[P_{h-1}(\ell)-P_{h}(\ell)]-\mathcal{T}_{A+GK_{h}}[P_{h-1}(\ell)-P_{h}(\ell)]\\
=&[K_{h}(\ell)-K_{h-1}(\ell)]^{T}
\mathcal{R}(P_{h-1})(\ell)[K_{h}(\ell)-K_{h-1}(\ell)].
\end{split}
\end{equation*}
This, together with the fact that $u(k)=K_{h}\left(\vartheta(k)\right)x(k)$
exponentially stabilizes $(A, G|\mathbb{G})$, results in $P_{h}\leq P_{h-1}$.
Therefore, $(i)$-$(iii)$ hold for any $h\in\mathbb{N}$.

Based on the monotone convergence theorem,
it follows from $(i)$ that there exists $\lim_{h\rightarrow +\infty}P_{h}=P_{max}\in{\mathcal{SH}_{\infty}^{n}}$
such that $P\leq P_{max}$ and $\mathcal{R}(P_{max})(\ell)\gg 0\ \mu\text{-}a.e..$
So, $\mathcal{W}(P_{max})=0$ is reached by taking $h\rightarrow+\infty$ and rearranging \eqref{1229PKLIM}.
The proof is then ended.
\end{proof}

\begin{remark}
Actually, by the definition of $\mathbf{S}_+$ and Theorem \ref{1145maxso},
it can be observed that under the hypothesis that $(A, G|\mathbb{G})$ is exponentially mean-square stabilizable,
$\mathbf{S}_+\neq\emptyset$ is equivalent to the existence of the maximal solution of coupled-AREs \eqref{GCARE1}.
\end{remark}

We are now in a position to cope with the existence of the stabilizing solution of coupled-AREs \eqref{GCARE1},
which is of great significance in designing the optimal and stabilizing controller related to some LQ optimization problems.

\begin{theorem}\label{1160exmax}
The following assertions are equivalent:

$(i)$~$(A, G|\mathbb{G})$ is exponentially mean-square stabilizable and there exists $P\in{\mathbf{S}_+}$ such that $(A+G\mathcal{M}(P);\mathcal{W}(P)^{\frac{1}{2}}|\mathbb{G})$ is detectable;

$(ii)$~The stabilizing solution of coupled-AREs \eqref{GCARE1} exists and satisfies the sign condition \eqref{SignCon771}.
\end{theorem}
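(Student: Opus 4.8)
The plan is to obtain the easy implication $(ii)\Rightarrow(i)$ directly and to prove $(i)\Rightarrow(ii)$ by showing that the maximal solution produced by Theorem \ref{1145maxso} is in fact stabilizing.

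For $(ii)\Rightarrow(i)$, let $P_{st}$ be the stabilizing solution satisfying the sign condition \eqref{SignCon771}. By Definition \ref{solutionmaxst993}, $u(k)=\mathcal{M}(P_{st})(\vartheta(k))x(k)$ exponentially stabilizes $(A,G|\mathbb{G})$, which is exactly exponential mean-square stabilizability. Taking $P=P_{st}$, the fact that $P_{st}$ solves \eqref{GCARE1} forces $\mathcal{W}(P_{st})=0$, hence $\mathcal{W}(P_{st})^{\frac{1}{2}}=0$; combined with $\mathcal{R}(P_{st})\gg0$ this gives $P_{st}\in\mathbf{S}_{+}$. Since the closed loop $A+G\mathcal{M}(P_{st})$ is already EMSS, the trivial injection $H=0$ in Definition \ref{defexde1009} makes $(A+G\mathcal{M}(P_{st});\,\mathcal{W}(P_{st})^{\frac{1}{2}}|\mathbb{G})=(A+G\mathcal{M}(P_{st});\,0|\mathbb{G})$ detectable, which establishes $(i)$.

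For $(i)\Rightarrow(ii)$, the assumption $P\in\mathbf{S}_{+}$ gives $\mathbf{S}_{+}\neq\emptyset$, so Theorem \ref{1145maxso} yields a maximal solution $P_{max}$ with $\mathcal{W}(P_{max})=0$, $\mathcal{R}(P_{max})\gg0$, and $P\le P_{max}$. Writing $K_{max}:=\mathcal{M}(P_{max})$ and $\bar K:=\mathcal{M}(P)$, I would complete the square exactly as in the proof of Theorem \ref{1145maxso}: evaluating the completion-of-squares identity at $P$ and at $P_{max}$ under the common feedback $K_{max}$ and subtracting, one reaches
\begin{equation*}
[P_{max}-P](\ell)-\mathcal{T}_{A+GK_{max}}[P_{max}-P](\ell)=C(\ell)^{T}C(\ell)\ \mu\text{-}a.e.,
\end{equation*}
where $C$ is the column stack of $\mathcal{W}(P)^{\frac{1}{2}}$ and $\mathcal{R}(P)^{\frac{1}{2}}(K_{max}-\bar K)$, whose blocks exist in $\mathcal{H}_{\infty}^{n+}$ and $\mathcal{H}_{\infty}^{m\times n}$ by Lemma \ref{semimeas864}. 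Because $P\le P_{max}$, the candidate $U:=P_{max}-P$ lies in $\mathcal{H}_{\infty}^{n+}$.

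The crux, which I expect to be the main obstacle, is to upgrade the detectability of $(A+G\bar K;\,\mathcal{W}(P)^{\frac{1}{2}}|\mathbb{G})$ assumed in $(i)$ to detectability of $(A+GK_{max};\,C|\mathbb{G})$, so that Theorem \ref{Mthe1495if} becomes applicable. I would argue by output injection: since $\mathcal{R}(P)\gg0$, Lemma \ref{semimeas864} gives $\mathcal{R}(P)^{\frac{1}{2}}\in\mathcal{H}_{\infty}^{m+*}$ with bounded inverse, so $-G\mathcal{R}(P)^{-\frac{1}{2}}\in\mathcal{H}_{\infty}^{n\times m}$ is admissible. Using the injection $[\,H,\ -G\mathcal{R}(P)^{-\frac{1}{2}}\,]$, with $H$ the injection witnessing the assumed detectability, cancels the cross term $G(K_{max}-\bar K)$ and reduces the injected closed loop to $A+G\bar K+H\mathcal{W}(P)^{\frac{1}{2}}$, which is EMSS; hence $(A+GK_{max};\,C|\mathbb{G})$ is detectable. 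Feeding this detectability together with the nonnegative solution $U=P_{max}-P$ of the Lyapunov identity above into Theorem \ref{Mthe1495if} shows that $(A+GK_{max}|\mathbb{G})$ is EMSS, i.e. $P_{max}$ is a stabilizing solution, and it satisfies the sign condition since $\mathcal{R}(P_{max})\gg0$. Throughout I would keep verifying, via Lemma \ref{semimeas864} and the boundedness of $G$ and $\mathcal{R}(P)^{-\frac{1}{2}}$, that $C$, $\mathcal{R}(P)^{-\frac{1}{2}}$ and the constructed injection all remain in the relevant $\mathcal{H}_{\infty}$-spaces, which is where the Borel-space measurability and integrability subtleties concentrate. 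Finally, uniqueness of the stabilizing solution, justifying the definite article, follows by applying the same identity to the difference of two stabilizing solutions and invoking the positivity of the Lyapunov solution under the resulting EMSS closed loop.
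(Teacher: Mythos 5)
Your proposal is correct and follows essentially the same route as the paper: obtain $P_{max}$ from Theorem \ref{1145maxso}, write the Lyapunov identity for $P_{max}-P$ under the feedback $\mathcal{M}(P_{max})$ with output stacking $\mathcal{W}(P)^{\frac{1}{2}}$ and $\mathcal{R}(P)^{\frac{1}{2}}[\mathcal{M}(P_{max})-\mathcal{M}(P)]$, and invoke Theorem \ref{Mthe1495if}, with the converse direction handled exactly as you do. The only difference is that you make explicit the output-injection step (via $-G\mathcal{R}(P)^{-\frac{1}{2}}$) that transfers detectability from the closed loop $A+G\mathcal{M}(P)$ to $A+G\mathcal{M}(P_{max})$, which the paper leaves implicit.
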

\begin{proof}
$(i)\rightarrow(ii)$:
From Theorem \ref{1145maxso}, it follows that there exists $P_{max}\in{\mathbf{S}_+}$
such that $\mathcal{W}(P_{max})(\ell)=0 \ \mu\text{-}a.e.$
and $P\leq P_{max}$ for any $P\in{\mathbf{S}_+}$.
Now choosing an arbitrarily fixed $P\in{\mathbf{S}_+}$ such that $(A+G\mathcal{M}(P);\mathcal{W}(P)^{\frac{1}{2}}|\mathbb{G})$ is detectable,
we know that $(A+G\mathcal{M}(P);C_{3}|\mathbb{G})$ is detectable,
where
$$C_3(\ell)=\left[
  \begin{array}{c}
  \mathcal{W}(P)^{\frac{1}{2}}(\ell) \\
   {\mathcal{R}(P)^{\frac{1}{2}}}(\ell)
   [\mathcal{M}(P_{max})(\ell)-\mathcal{M}(P)(\ell)] \\
    \end{array}
    \right].$$
Noting that
\begin{equation*}
\begin{split}
&[P_{max}(\ell)-P(\ell)]-\mathcal{T}_{A+G\mathcal{M}(P_{max})}
[P_{max}(\ell)-P(\ell)]\\
=&C_3(\ell)^{T}C_3(\ell),
\end{split}
\end{equation*}
and by Theorem \ref{Mthe1495if}, it is concluded that $P_{max}$ is the stabilizing solution of coupled-AREs \eqref{GCARE1}.

$(ii)\rightarrow(i)$: If coupled-AREs \eqref{GCARE1} admit a stabilizing solution $P_{st}$ satisfying \eqref{SignCon771},
then $P_{st}\in{\mathbf{S}_+}$ and $(A+ G\mathcal{M}(P_{st})|\mathbb{G})$ is EMSS.
Therefore $(A, G|\mathbb{G})$ is exponentially mean-square stabilizable
and $(A+G\mathcal{M}(P_{st});\mathcal{W}(P_{st})^{\frac{1}{2}}|\mathbb{G})$ is detectable.
\end{proof}

\begin{remark}
Theorem \ref{1160exmax} makes it known that under the conditions of exponential mean-square stabilizability and detectability,
the maximal solution of coupled-AREs \eqref{GCARE1} is just the stabilizing solution.
On the other hand, by a similar argument with \cite{BookDragan2010},
one can also prove that the stabilizing solution of coupled-AREs \eqref{GCARE1},
if exists, must be the maximal solution.
Moreover, if the stabilizing solution of coupled-AREs \eqref{GCARE1} exists, then it is always unique.
\end{remark}

The following corollary presents a condition that guarantees the existence of the stabilizing solution of coupled-AREs \eqref{GCARE1}
associated with the standard LQ optimization problem.
\begin{corollary}\label{1234stLQ}
Fix $Q\in\mathcal{H}_{\infty}^{n+}$ and $R\in{\mathcal{H}_{\infty}^{m+*}}$.
Suppose that $(A, G|\mathbb{G})$ is exponentially mean-square stabilizable and $(A;Q^{\frac{1}{2}}|\mathbb{G})$ is detectable,
then coupled-AREs \eqref{GCARE1} admits a unique stabilizing solution satisfying \eqref{SignCon771}.
\end{corollary}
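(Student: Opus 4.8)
The plan is to recognize this corollary as the special case of Theorem \ref{1160exmax} obtained at the distinguished point $P=0$, so the substantive work reduces to two easy verifications: that the zero function lies in $\mathbf{S}_+$, and that the detectability condition figuring in Theorem \ref{1160exmax}$(i)$ collapses exactly onto the hypothesis $(A;Q^{\frac{1}{2}}|\mathbb{G})$ being detectable.

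First I would evaluate the defining operators at $P=0$. Since $\mathcal{E}(0)(\ell)=0$, it is immediate that $\mathcal{G}(0)(\ell)=G(\ell)^{T}\mathcal{E}(0)(\ell)A(\ell)=0$ and $\mathcal{T}_{A}(0)(\ell)=0$, whence $\mathcal{R}(0)(\ell)=R(\ell)+\mathcal{T}_{G}(0)(\ell)=R(\ell)$ and $\mathcal{W}(0)(\ell)=Q(\ell)$ $\mu$-$a.e.$. Because $R\in\mathcal{H}_{\infty}^{m+*}$ gives $\mathcal{R}(0)(\ell)\gg 0$ and $Q\in\mathcal{H}_{\infty}^{n+}$ gives $\mathcal{W}(0)(\ell)\geq 0$, the zero function belongs to $\mathbf{S}_+$, so in particular $\mathbf{S}_+\neq\emptyset$. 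Moreover $\mathcal{M}(0)(\ell)=-\mathcal{R}(0)(\ell)^{-1}\mathcal{G}(0)(\ell)=0$, so $A+G\mathcal{M}(0)=A$.

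Next I would apply Lemma \ref{semimeas864} to obtain the unique measurable square root $Q^{\frac{1}{2}}\in\mathcal{H}_{\infty}^{n+}$, which yields $\mathcal{W}(0)^{\frac{1}{2}}=Q^{\frac{1}{2}}$. With these identifications the detectability condition in Theorem \ref{1160exmax}$(i)$, namely that $(A+G\mathcal{M}(P);\mathcal{W}(P)^{\frac{1}{2}}|\mathbb{G})$ be detectable for some $P\in\mathbf{S}_+$, is satisfied by the witness $P=0$, since $(A+G\mathcal{M}(0);\mathcal{W}(0)^{\frac{1}{2}}|\mathbb{G})=(A;Q^{\frac{1}{2}}|\mathbb{G})$, which is detectable by hypothesis. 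Together with the assumed exponential mean-square stabilizability of $(A,G|\mathbb{G})$, this establishes assertion $(i)$ of Theorem \ref{1160exmax}.

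Finally, the equivalence in Theorem \ref{1160exmax} delivers assertion $(ii)$: coupled-AREs \eqref{GCARE1} admit a stabilizing solution satisfying the sign condition \eqref{SignCon771}. Uniqueness is then inherited from the remark following Theorem \ref{1160exmax}, which records that the stabilizing solution, whenever it exists, is unique (and coincides with the maximal solution). I do not expect a genuine obstacle here; the only point requiring care is the well-definedness and measurability of $Q^{\frac{1}{2}}$ together with the identification $\mathcal{W}(0)^{\frac{1}{2}}=Q^{\frac{1}{2}}$, which is precisely what Lemma \ref{semimeas864} supplies, and the routine bookkeeping confirming that $\mathcal{G}$, $\mathcal{M}$, and $\mathcal{T}_{A}$ all vanish at $P=0$.
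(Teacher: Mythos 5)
Your proposal is correct and follows exactly the paper's own route: the paper likewise sets $P_{0}=0$, observes $\mathcal{W}(P_{0})=Q\geq 0$ and $\mathcal{R}(P_{0})=R\gg 0$ so that $P_{0}\in\mathbf{S}_{+}$, and invokes Theorem \ref{1160exmax}. Your write-up is in fact slightly more careful than the paper's, since you explicitly verify that $\mathcal{M}(0)=0$ and $\mathcal{W}(0)^{\frac{1}{2}}=Q^{\frac{1}{2}}$ so that the detectability hypothesis of Theorem \ref{1160exmax}$(i)$ reduces to that of the corollary.
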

\begin{proof}
For any $\ell\in\Theta$, set $P_{0}(\ell)=0$.
Apparently, $\mathcal{W}(P_{0})(\ell)=Q(\ell)\geq 0$ and $ \mathcal{R}(P_{0})(\ell)=R(\ell)\gg 0 \ \mu\text{-}a.e.$.
Therefore, $P_0\in\mathbf{S}_+$, and the desired result is derived from Theorem \ref{1160exmax}.
\end{proof}
\begin{remark}
From the proofs of Theorems \ref{1145maxso} and \ref{1160exmax},
it can be observed that, under certain conditions,
the solutions $\{P_{h},\ h\in\mathbb{N}\}$ of the generalized Lyapunov equations \eqref{KK946} and \eqref{1229PKLIM}
converge to the stabilizing solution of coupled-AREs \eqref{GCARE1}.
Enlightened by that, one can design an iterative procedure by recursively solving \eqref{KK946} and \eqref{1229PKLIM}
to compute the stabilizing solution of coupled-AREs \eqref{GCARE1}.
Under the condition of Corollary \ref{1234stLQ},
the iterative procedure can be basically seen as an adaptation of the Kleinman algorithm \cite{Kleinman1968}
and also has a fast convergence rate.
\end{remark}

The result of Corollary \ref{1234stLQ} is consistent with Theorem 5.8 in \cite{Costa2015}.  It is noteworthy that, since $Q$ in the coupled-AREs \eqref{GCARE1} is sign-indefinite (i.e., $Q \in \mathcal{SH}^{n}_{\infty}$), we can further deduce the following corollary to Theorem 5, which provides condition for the existence of the stabilizing solution of coupled-AREs \eqref{GCARE1}
associated with the $H_{\infty}$ control problem.
\begin{corollary}\label{1237Hinfc}
Fix $Q\in\mathcal{H}_{\infty}^{n-}$ and $R\in{\mathcal{H}_{\infty}^{m+*}}$.
Suppose that there exists $P\in\mathcal{SH}_{\infty}^{n}$ such that $P(\ell)\leq-\xi I$ for some $\xi>0$
and satisfying the following inequality
\begin{equation}\label{352ineq}
\left[
  \begin{array}{cc}
    -P(\ell)+\mathcal{T}_{A}(P)(\ell)+Q(\ell)   & \mathcal{G}(P)(\ell)^{T} \\
     \mathcal{G}(P)(\ell) & \mathcal{R}(P)(\ell) \\
  \end{array}
\right]\gg 0\  \mu\text{-}a.e.,
\end{equation}
then coupled-AREs \eqref{GCARE1} admits a unique stabilizing solution  satisfying \eqref{SignCon771}.
\end{corollary}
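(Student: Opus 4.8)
The plan is to verify condition $(i)$ of Theorem \ref{1160exmax} for the very $P$ supplied in the hypothesis, and then read off assertion $(ii)$ together with uniqueness from the remark following that theorem. Thus three things must be checked: that $\mathbf{S}_+\neq\emptyset$ (indeed $P\in\mathbf{S}_+$), that $(A,G|\mathbb{G})$ is exponentially mean-square stabilizable, and that $(A+G\mathcal{M}(P);\mathcal{W}(P)^{\frac{1}{2}}|\mathbb{G})$ is detectable.

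First I would apply a Schur-complement argument to the block inequality \eqref{352ineq}. Since the full $2\times 2$ block is uniformly positive definite, both diagonal blocks are; in particular $\mathcal{R}(P)(\ell)\gg 0$, and complementing on the $(2,2)$-block gives
$$[-P(\ell)+\mathcal{T}_{A}(P)(\ell)+Q(\ell)]-\mathcal{G}(P)(\ell)^{T}\mathcal{R}(P)(\ell)^{-1}\mathcal{G}(P)(\ell)\gg 0,$$
which is precisely $\mathcal{W}(P)(\ell)\gg 0$. Hence $P\in\mathbf{S}_+$, so $\mathbf{S}_+\neq\emptyset$. For stabilizability, write $\bar{P}=-P$, so that $\bar{P}\geq\xi I\gg 0$ and $\mathcal{T}_{A}(P)=-\mathcal{T}_{A}(\bar{P})$ by linearity. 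The uniform positivity of the $(1,1)$ block reads $\bar{P}(\ell)-\mathcal{T}_{A}(\bar{P})(\ell)+Q(\ell)\gg 0$; since $Q\in\mathcal{H}_{\infty}^{n-}$ yields $-Q\geq 0$, adding $-Q$ preserves the uniform bound and gives $\bar{P}(\ell)-\mathcal{T}_{A}(\bar{P})(\ell)\gg 0$ $\mu$-$a.e.$. By criterion $(vi)$ of Theorem \ref{EMSSCiff}, $(A|\mathbb{G})$ is itself EMSS, so taking $K=0\in\mathcal{H}_{\infty}^{m\times n}$ shows $(A,G|\mathbb{G})$ is exponentially mean-square stabilizable.

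For detectability, note that $\mathcal{R}(P)\gg 0$ makes $\mathcal{R}(P)^{-1}$ bounded, whence $\mathcal{M}(P)=-\mathcal{R}(P)^{-1}\mathcal{G}(P)\in\mathcal{H}_{\infty}^{m\times n}$ and $A+G\mathcal{M}(P)\in\mathcal{H}_{\infty}^{n\times n}$. By Lemma \ref{semimeas864}, $\mathcal{W}(P)^{\frac{1}{2}}\in\mathcal{H}_{\infty}^{n+*}$, so $\mathcal{W}(P)^{\frac{1}{2}}(\ell)$ is uniformly invertible with bounded inverse. Setting $H=-(A+G\mathcal{M}(P))[\mathcal{W}(P)^{\frac{1}{2}}]^{-1}\in\mathcal{H}_{\infty}^{n\times n}$ makes $A(\ell)+G(\ell)\mathcal{M}(P)(\ell)+H(\ell)\mathcal{W}(P)^{\frac{1}{2}}(\ell)=0$, whose dynamics is trivially EMSS; thus $(A+G\mathcal{M}(P);\mathcal{W}(P)^{\frac{1}{2}}|\mathbb{G})$ is detectable in the sense of Definition \ref{defexde1009}. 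Condition $(i)$ of Theorem \ref{1160exmax} now holds, and its implication $(i)\Rightarrow(ii)$ delivers a stabilizing solution of \eqref{GCARE1} satisfying the sign condition \eqref{SignCon771}; uniqueness follows from the remark after Theorem \ref{1160exmax}.

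The main obstacle, and the point I would treat most carefully, is ensuring that each inequality is \emph{uniform} ($\gg 0$, i.e. bounded below by $\delta I$ for a single $\delta>0$ $\mu$-$a.e.$) rather than merely pointwise positive: both the invocation of Theorem \ref{EMSSCiff}$(vi)$ and the invertibility of $\mathcal{W}(P)^{\frac{1}{2}}$ rely on such uniform bounds, so the Schur-complement step and the sign bookkeeping with $Q\le 0$ must be carried out with the essential-supremum norms in mind. The accompanying $\mathcal{H}_{\infty}$-membership and measurability checks for $\mathcal{M}(P)$ and $H$ are then routine, being immediate from the boundedness of $\mathcal{R}(P)^{-1}$ and $[\mathcal{W}(P)^{\frac{1}{2}}]^{-1}$.
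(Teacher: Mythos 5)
Your proposal is correct and follows essentially the same route as the paper's own proof: Schur-complementing \eqref{352ineq} to place $P$ in $\mathbf{S}_+$, using the $(1,1)$ block together with $Q\leq 0$ and Theorem \ref{EMSSCiff}$(vi)$ to get EMSS of $(A|\mathbb{G})$ (hence stabilizability), constructing the output injection $H=-(A+G\mathcal{M}(P))[\mathcal{W}(P)^{\frac{1}{2}}]^{-1}$ via Lemma \ref{semimeas864} to annihilate the closed-loop dynamics and obtain detectability, and then invoking Theorem \ref{1160exmax} with uniqueness from the subsequent remark. The only cosmetic difference is that the paper certifies EMSS of the zero closed-loop dynamics through the Lyapunov criterion with $\bar P\leq-\xi_0 I$ rather than calling it trivial, which changes nothing of substance.
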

\begin{proof}
In view of \eqref{352ineq}, it follows from Theorem 4 in \cite{Xiao2023} that
$(A|\mathbb{G})$ is EMSS, which implies that
$(A, G|\mathbb{G})$ is exponentially mean-square stabilizable.
Moreover,
according to the generalized Schur complement lemma,
there exists $\bar{P}\in \mathbf{S}_{+}$ satisfying $\mathcal{W}(\bar{P})(\ell)\gg 0$ and $\bar{P}(\ell)\leq-\xi_{0} I \  \mu\text{-}a.e.$ for some $\xi_{0}>0$.
Further, by Lemma 2,
there exists $\mathcal{W}(\bar{P})^{\frac{1}{2}}\in\mathcal{H}_{\infty}^{n+*}$ such that $\mathcal{W}(\bar{P})=[\mathcal{W}(\bar{P})^{\frac{1}{2}}]^{2}.$
Then $\mathcal{W}(\bar{P})^{-\frac{1}{2}}\in\mathcal{H}_{\infty}^{n+*}$ and it holds that $\mathcal{W}(\bar{P})^{-\frac{1}{2}}(\ell)\mathcal{W}(\bar{P})^{\frac{1}{2}}(\ell)=I\  \mu\text{-}a.e..$
For any $\ell\in\Theta$, let $H(\ell)=-[A(\ell)+G(\ell)\mathcal{M}(\bar{P})(\ell)]
\mathcal{W}(\bar{P})^{-\frac{1}{2}}(\ell).$
Note that
$\bar{P}(\ell)-[A(\ell)+G(\ell)\mathcal{M}(\bar{P})(\ell)+H(\ell)
\mathcal{W}(\bar{P})^{\frac{1}{2}}(\ell)]^{T}\mathcal{E}(\bar{P})(\ell)
[A(\ell)+G(\ell)\mathcal{M}(\bar{P})(\ell)+H(\ell)
\mathcal{W}(\bar{P})^{\frac{1}{2}}(\ell)]=\bar{P}(\ell)\leq -\xi_{0}I\ \mu\text{-}a.e..$
Applying Theorem 4 from \cite{Xiao2023} again, we have that there exists $\bar{P}\in{\mathbf{S}_+}$ such that $(A+G\mathcal{M}(\bar{P});\mathcal{W}(\bar{P})^{\frac{1}{2}}|\mathbb{G})$ is detectable.
The desired result is obtained by Theorem \ref{1160exmax}.
\end{proof}
\begin{remark}
At the end of this section, we would like to talk more about coupled-AREs \eqref{GCARE1}.
For finite MJLSs, when taking account of the LQ optimization problem
where the weighting matrices of the state and control are allowed to be indefinite (for example, see \cite{VALLECOSTA2008391}),
there exists a more general class of coupled-AREs:
\begin{equation}\label{weini717}
P(i)=\mathcal{T}_{A}(P)(i)+Q(i)
-\mathcal{G}(P)(i)^{T}\mathcal{R}(P)(i)^{+}
\mathcal{G}(P)(i)
\end{equation}
with $\mathcal{R}(P)(i)\geq 0$, $i\in\Theta$,
where $\mathcal{R}(P)(i)^{+}$ denotes the Moore-Penrose inverse.
Coupled-AREs \eqref{weini717} can be used to consider quite general problems (see \cite{VALLECOSTA2008391}).
However, there are some limitations in generalizing these results to MJLSs with the Markov chain on a Borel space.
In fact, if the sign condition \eqref{SignCon771} is weakened to
allow $\mathcal{R}(P)(\ell)>0\ \mu\text{-}a.e.$ or $\mathcal{R}(P)(\ell)\geq 0\ \mu\text{-}a.e.$,
boundedness and measurability of the considered matrix-valued function may not be guaranteed.
Consider an example with the Markov chain taking values in a countably infinite set $\mathbb{N}^{+}$.
For $\mathcal{R}(P)\in\mathcal{SH}_{\infty}^{2}$ given by
$$\mathcal{R}(P)(i)=
\left(
 \begin{array}{cc}
   \frac{1}{i} & 0\\
     0 & 0 \\
   \end{array}
\right),\
i\in\mathbb{N}^{+},$$
we have that $\mathcal{R}(P)^{+}$ is not in $\mathcal{SH}_{\infty}^{2}$
since $\|\mathcal{R}(P)^{+}\|_{\infty}=\sup\{i, i\in\mathbb{N}^{+}\}=\infty$.
\end{remark}

\section{The Infinite Horizon Control Design}\label{Gamecontrol}
\subsection{The Game-Based Control}\label{subsec3}
This subsection focuses on designing the game-based controller for MJLSs with the Markov chain taking values in $\Theta$.
Before that, we first address the infinite horizon LQ optimization problem associated with $(A,G|\mathbb{G})$.

Consider $(A,G|\mathbb{G})$.
Define the set of all admissible controls as
\begin{align*}
\begin{split}
&\mathbb{U}:=\{u|u\in{l^{2}(\mathbb{N};\mathbb{R}^{m})}\ \text{with}\ u(k),\ k\in\mathbb{N}\\
&\ \ \ \ \ \ \ \ \ \ \ \ \ \ \ \ \ \text{exponentially stabilizing}\ (A,G|\mathbb{G})\},
\end{split}
\end{align*}
where $l^{2}(\mathbb{N};\mathbb{R}^{m}):=\{u|u=\{u(k), k\in{\mathbb{N}}\}$ is an $\mathbb{R}^{m}$-valued random sequence
with $u(k)$ being $\mathfrak{F}_{k}$-measurable
and satisfying $\|u\|_{l^{2}(\mathbb{N};\mathbb{R}^{m})}=[\sum_{k=0}^{\infty}\mathbf{E}\|u(k)\|^{2}]^{\frac{1}{2}}<\infty\}$.
For $u\in\mathbb{U}$, define the cost function associated with $(A,G|\mathbb{G})$ as
\begin{equation}\label{infiJ867}
\begin{split}
&J\left(x_{0}, \vartheta_{0}; u\right):=\\
&\sum_{k=0}^{\infty}\mathbf{E}\left[x(k)^{T} Q\left(\vartheta(k)\right) x(k)+u(k)^{T}R\left(\vartheta(k)\right)u(k)\right],
\end{split}
\end{equation}
where $Q\in\mathcal{SH}^{n}_{\infty}$ and $R\in\mathcal{H}^{m+*}_{\infty}$.
The infinite horizon LQ optimization problem associated with $(A,G|\mathbb{G})$
 is to find a control strategy $u^{*}\in\mathbb{U}$
such that $J\left(x_{0}, \vartheta_{0}; u^{*}\right)\leq J\left(x_{0}, \vartheta_{0}; u\right)$ over $u\in\mathbb{U}$.
Define $V\left(x_{0},\vartheta_{0}\right):=\inf _{u \in \mathbb{U}} J\left(x_{0}, \vartheta_{0}; u\right).$
$u^*\in \mathbb{U}$ is called an optimal control law if it achieves the optimal cost value $V\left(x_{0},\vartheta_{0}\right)$.

The following lemma demonstrates that the optimal control law and the corresponding cost value can be expressed
in terms of the stabilizing solution of coupled-AREs \eqref{GCARE1}.
\begin{lemma}\label{inde899the}
Suppose that $(A, G|\mathbb{G})$ is exponentially mean-square stabilizable.
If the coupled-AREs \eqref{GCARE1} admit a stabilizing solution $P\in\mathcal{SH}_{\infty}^{n}$,
then the infinite horizon LQ optimization problem
associated with $(A,G|\mathbb{G})$
is solvable, and the optimal cost value is
$$V(x_{0},\vartheta_{0})
=x_{0}^{T}\mathbf{E}\{P(\vartheta_{0})\}x_{0}.$$
This optimal cost is achieved through the optimal control law
\begin{equation}\label{899opu}
u^{*}(k)=\mathcal{M}(P)(\vartheta(k))x(k),\ k\in\mathbb{N},
\end{equation}
and $x(k)$ is the system state of $(A,G|\mathbb{G})$.
\end{lemma}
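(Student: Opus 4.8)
The plan is to prove the lemma by a completing-the-square argument organized around the candidate value function $x(k)^{T}P(\vartheta(k))x(k)$. First I would compute the one-step conditional increment of this function along an arbitrary admissible control $u\in\mathbb{U}$. Conditioning on $\mathfrak{F}_{k}$ and exploiting the Markov property through the density $g$, the average over the next mode produces the operator $\mathcal{E}$, giving $\mathbf{E}\{x(k+1)^{T}P(\vartheta(k+1))x(k+1)\,|\,\mathfrak{F}_{k}\}=x(k+1)^{T}\mathcal{E}(P)(\vartheta(k))x(k+1)$. Substituting $x(k+1)=A(\vartheta(k))x(k)+G(\vartheta(k))u(k)$ and expanding, the right-hand side becomes $x(k)^{T}\mathcal{T}_{A}(P)(\vartheta(k))x(k)+2x(k)^{T}\mathcal{G}(P)(\vartheta(k))^{T}u(k)+u(k)^{T}\mathcal{T}_{G}(P)(\vartheta(k))u(k)$, expressed entirely through the operators already introduced.

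Next I would add the running cost $x(k)^{T}Q(\vartheta(k))x(k)+u(k)^{T}R(\vartheta(k))u(k)$ to this increment and invoke the Riccati equation \eqref{GCARE1} to replace $\mathcal{T}_{A}(P)-P+Q$ by $\mathcal{G}(P)^{T}\mathcal{R}(P)^{-1}\mathcal{G}(P)$. Writing $\delta(k):=u(k)-\mathcal{M}(P)(\vartheta(k))x(k)$ and completing the square in $u(k)$ then yields the identity

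\begin{equation*}
x(k)^{T}Q(\vartheta(k))x(k)+u(k)^{T}R(\vartheta(k))u(k)=-\Delta_{k}+\delta(k)^{T}\mathcal{R}(P)(\vartheta(k))\delta(k),
\end{equation*}

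where $\Delta_{k}:=\mathbf{E}\{x(k+1)^{T}P(\vartheta(k+1))x(k+1)\,|\,\mathfrak{F}_{k}\}-x(k)^{T}P(\vartheta(k))x(k)$. The cross and quadratic terms match exactly because $\mathcal{R}(P)\mathcal{M}(P)=-\mathcal{G}(P)$ and $\mathcal{M}(P)^{T}\mathcal{R}(P)\mathcal{M}(P)=\mathcal{G}(P)^{T}\mathcal{R}(P)^{-1}\mathcal{G}(P)$, using the symmetry of $\mathcal{R}(P)$.

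Taking expectations, summing over $k=0,\dots,N$, and applying the tower property so that $\sum_{k}\mathbf{E}\{\Delta_{k}\}$ telescopes, I obtain $\sum_{k=0}^{N}\mathbf{E}\{x(k)^{T}Q(\vartheta(k))x(k)+u(k)^{T}R(\vartheta(k))u(k)\}=x_{0}^{T}\mathbf{E}\{P(\vartheta_{0})\}x_{0}-\mathbf{E}\{x(N+1)^{T}P(\vartheta(N+1))x(N+1)\}+\sum_{k=0}^{N}\mathbf{E}\{\delta(k)^{T}\mathcal{R}(P)(\vartheta(k))\delta(k)\}$. Because $u\in\mathbb{U}$ exponentially stabilizes $(A,G|\mathbb{G})$ and $P\in\mathcal{SH}_{\infty}^{n}$ is bounded, the terminal term tends to zero as $N\to\infty$, leaving $J(x_{0},\vartheta_{0};u)=x_{0}^{T}\mathbf{E}\{P(\vartheta_{0})\}x_{0}+\sum_{k=0}^{\infty}\mathbf{E}\{\delta(k)^{T}\mathcal{R}(P)(\vartheta(k))\delta(k)\}$. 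Since the stabilizing solution satisfies the sign condition $\mathcal{R}(P)\gg0$, the series is nonnegative, so $J(x_{0},\vartheta_{0};u)\ge x_{0}^{T}\mathbf{E}\{P(\vartheta_{0})\}x_{0}$ with equality precisely when $\delta(k)\equiv0$, i.e. $u(k)=\mathcal{M}(P)(\vartheta(k))x(k)$.

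Finally, I would confirm that $u^{*}(k)=\mathcal{M}(P)(\vartheta(k))x(k)$ is admissible: by definition of the stabilizing solution the closed loop $A+G\mathcal{M}(P)$ is EMSS, so $\mathbf{E}\|x(k)\|^{2}$ decays geometrically; combined with $\mathcal{M}(P)\in\mathcal{H}_{\infty}^{m\times n}$ this gives $u^{*}\in l^{2}(\mathbb{N};\mathbb{R}^{m})$ with each $u^{*}(k)$ being $\mathfrak{F}_{k}$-measurable, hence $u^{*}\in\mathbb{U}$. Substituting $u^{*}$ makes $\delta\equiv0$, so it attains the lower bound, is optimal, and yields $V(x_{0},\vartheta_{0})=x_{0}^{T}\mathbf{E}\{P(\vartheta_{0})\}x_{0}$. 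I expect the main obstacle to lie not in the algebra but in the rigorous justification within the Borel setting: establishing the conditional-expectation identity through the stochastic kernel and the operator $\mathcal{E}$, and verifying that the terminal term $\mathbf{E}\{x(N+1)^{T}P(\vartheta(N+1))x(N+1)\}$ genuinely vanishes. The latter relies on the boundedness of $P$ together with the integrability and geometric decay furnished by exponential mean-square stability, precisely the boundedness and integrability issues highlighted in the introduction.
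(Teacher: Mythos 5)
Your proposal is correct and follows essentially the same route as the paper's proof: a completion-of-squares identity built around the candidate value function $x(k)^{T}P(\vartheta(k))x(k)$, a telescoping sum whose terminal term vanishes by stabilizability and boundedness of $P$, nonnegativity of the residual quadratic term via the sign condition $\mathcal{R}(P)\gg 0$, and admissibility of $u^{*}$ from the definition of the stabilizing solution. No substantive differences to report.
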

\begin{proof}
For any $\tau\in\mathbb{N}$ and the stabilizing solution $P\in\mathcal{SH}_{\infty}^{n}$ of coupled-AREs \eqref{GCARE1},
it can be established that for each $k\in\overline{0, \tau-1},$
\begin{equation*}
\begin{split}
&\mathbf{E}\{x(k+1)^{T}P(\vartheta(k+1))x(k+1)
-x(k)^{T}P(\vartheta(k))x(k)\}\\
=&\mathbf{E}\{x(k)^{T}[\mathcal{T}_{A}(P)(\vartheta(k))-P(\vartheta(k))]x(k)\\
&\ \ \ +u(k)^{T}G(\vartheta(k))^{T}\mathcal{E}(P)(\vartheta(k))G(\vartheta(k))u(k)\\
&\ \ \ +2u(k)^{T}\mathcal{G}(P)(\vartheta(k))x(k)\}.
\end{split}
\end{equation*}
By taking the sum over $k\in\overline{0, \tau-1}$, and then letting $\tau\rightarrow+\infty$,
we can write \eqref{infiJ867} as
\begin{equation*}
\begin{split}
&J\left(x_{0}, \vartheta_{0}; u\right)=x_{0}^{T}\mathbf{E}\{P(\vartheta_{0})\}x_{0}\\
&+\sum_{k=0}^{\infty}\mathbf{E}\left[x(k)^{T}(\mathcal{T}_{A}(P)
(\vartheta(k))-P(\vartheta(k))+Q(\vartheta(k)))x(k)\right.\\
&\left.+u(k)^{T}\mathcal{R}(P)(\vartheta(k))u(k)+2u(k)^{T}
\mathcal{G}(P)A(\vartheta(k))x(k)\right].
\end{split}
\end{equation*}
Since $P$ satisfies \eqref{GCARE1}, by completing the square, we have that
\begin{equation*}
\begin{split}
J\left(x_{0}, \vartheta_{0}; u\right)&
\!=\!
\sum_{k=0}^{\infty}\mathbf{E}\left\{[u(k)\!-\!\mathcal{M}(P)
(\vartheta(k))x(k)]^{T}
\mathcal{R}(P)(\vartheta(k))\right.\\
\cdot&[u(k)-\mathcal{M}(P)(\vartheta(k))x(k)]\}
+x_{0}^{T}\mathbf{E}\{P(\vartheta_{0})\}x_{0}.
\end{split}
\end{equation*}
Consider $u^*=\{u^*(k),\ k\in\mathbb{N}\}$ with $u^{*}(k)$ given by \eqref{899opu}.
As $P$ is the stabilizing solution of the coupled-AREs \eqref{GCARE1}, it follows that $u^*\in\mathbb{U}$.
Imposing $u^*$ on $(A,G|\mathbb{G})$ yields that $V\left(x_{0},\vartheta_{0}\right)= x_{0}^{T}\mathbf{E}\{P(\vartheta_{0})\} x_{0}$.
\end{proof}

If restricting $Q=C^{T}C\in\mathcal{H}_{\infty}^{n+}$ and $R=D^{T}D=\mathcal{I}\in\mathcal{H}_{\infty}^{n+*}$,
a standard LQ optimization problem is introduced with the cost function:
\begin{equation*}
\begin{split}
&J_{S}(x_{0}, \vartheta_{0}; u):= \\
&\sum_{k=0}^{\infty}
\mathbf{E}\left[x(k)^{T}C(\vartheta(k))^{T}C(\vartheta(k))x(k)
+u(k)^{T}u(k)\right].
\end{split}
\end{equation*}

Combining Corollary \ref{1234stLQ} and Lemma \ref{inde899the}, one can get the following result.
\begin{corollary}\label{1727h2lq}
Suppose that $(A, G|\mathbb{G})$ is exponentially mean-square stabilizable and $(A;C|\mathbb{G})$ is detectable,
then the coupled-AREs \eqref{GCARE1} have a unique stabilizing solution $P\in\mathcal{H}_{\infty}^{n+}$.
Furthermore,
$$\inf _{u \in \mathbb{U}} J_{S}\left(x_{0}, \vartheta_{0}; u\right)=x_{0}^{T}\mathbf{E}\{P(\vartheta_{0})\}x_{0}$$
can be achieved by the optimal control law
$$u^{*}(k)=\mathcal{M}(P)(\vartheta(k))x(k),\ k\in\mathbb{N}.$$
\end{corollary}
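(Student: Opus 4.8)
The plan is to specialize the general LQ machinery already developed and then invoke Corollary \ref{1234stLQ} and Lemma \ref{inde899the}, the only nontrivial point being to match the two detectability hypotheses. First I would set $Q=C^{T}C$ and $R=D^{T}D$; by Assumption \ref{Assumption1} one has $C\in\mathcal{H}_{\infty}^{p\times n}$, so $Q\in\mathcal{H}_{\infty}^{n+}$, and by part $(iii)$, $R=\mathcal{I}\in\mathcal{H}_{\infty}^{m+*}$. With these weights the cost $J_{S}$ is precisely \eqref{infiJ867}, so the standard LQ problem is the LQ problem already treated, specialized to $Q=C^{T}C$, $R=\mathcal{I}$.

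The crux is to pass from the hypothesis that $(A;C|\mathbb{G})$ is detectable to the hypothesis that $(A;Q^{1/2}|\mathbb{G})$ is detectable, the latter being what Corollary \ref{1234stLQ} requires, where $Q^{1/2}$ is the square root furnished by Lemma \ref{semimeas864}. Since $C(\ell)^{T}C(\ell)=Q(\ell)=[Q^{1/2}(\ell)]^{2}$ $\mu$-a.e., we have $\|C(\ell)x\|=\|Q^{1/2}(\ell)x\|$ for every $x$, hence $\ker C(\ell)=\ker Q^{1/2}(\ell)$. I would then produce a \emph{bounded, measurable} factor $T=C(Q^{1/2})^{+}$, where $(\cdot)^{+}$ denotes the Moore-Penrose pseudoinverse; this operation is Borel-measurable on matrices, so $T\in\mathcal{H}^{p\times n}$, and the isometry $\|C(\ell)x\|=\|Q^{1/2}(\ell)x\|$ forces $\|T\|_{\infty}\leq 1$, whence $T\in\mathcal{H}_{\infty}^{p\times n}$. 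Since $(Q^{1/2})^{+}Q^{1/2}$ is the orthogonal projection onto $(\ker Q^{1/2}(\ell))^{\perp}=(\ker C(\ell))^{\perp}$ and $C$ annihilates $\ker C(\ell)$, it follows that $T(\ell)Q^{1/2}(\ell)=C(\ell)$ $\mu$-a.e. Consequently, if $H\in\mathcal{H}_{\infty}^{n\times p}$ witnesses the detectability of $(A;C|\mathbb{G})$, the choice $\bar{H}=HT\in\mathcal{H}_{\infty}^{n\times n}$ gives $A+\bar{H}Q^{1/2}=A+HC$, so $(A+\bar{H}Q^{1/2}|\mathbb{G})$ is EMSS and $(A;Q^{1/2}|\mathbb{G})$ is detectable.

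Having verified the hypotheses, Corollary \ref{1234stLQ} yields a unique stabilizing solution $P$ of \eqref{GCARE1} obeying the sign condition \eqref{SignCon771}, and Lemma \ref{inde899the} then delivers the optimal value $x_{0}^{T}\mathbf{E}\{P(\vartheta_{0})\}x_{0}$ together with the optimal law $u^{*}(k)=\mathcal{M}(P)(\vartheta(k))x(k)$. To upgrade $P\in\mathcal{SH}_{\infty}^{n}$ to $P\in\mathcal{H}_{\infty}^{n+}$, I would use that with $K=\mathcal{M}(P)\in\mathbf{K}$ the stabilizing solution satisfies $P-\mathcal{T}_{A+GK}(P)=C^{T}C+K^{T}K\geq 0$; since $r_{\sigma}(\mathcal{T}_{A+GK})<1$ and $\mathcal{T}_{A+GK}$ is a positive operator, one obtains $P=\sum_{k=0}^{\infty}\mathcal{T}_{A+GK}^{k}(C^{T}C+K^{T}K)\geq 0$ $\mu$-a.e.

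I expect the measurable factorization $C=TQ^{1/2}$ to be the main obstacle: this is exactly where the Borel-space setting departs from the finite-state case, since one must exhibit a measurable and bounded output-injection factor rather than rely on a pointwise construction whose measurability in $\ell$ is not automatic. Securing the Borel-measurability of the pseudoinverse together with the contractivity bound $\|T\|_{\infty}\leq 1$ is what makes the reduction to Corollary \ref{1234stLQ} legitimate.
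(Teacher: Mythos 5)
Your proposal is correct and follows essentially the same route as the paper, whose proof of this corollary is a one-line combination of Corollary \ref{1234stLQ} (with $Q=C^{T}C$, $R=D^{T}D=\mathcal{I}$) and Lemma \ref{inde899the}. The extra work you do — the measurable contraction factor $T=C(Q^{1/2})^{+}$ transferring detectability from $(A;C|\mathbb{G})$ to $(A;Q^{1/2}|\mathbb{G})$, and the Neumann-series argument for $P\geq 0$ — is sound and in fact fills in details the paper leaves implicit.
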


Now we consider a two-player Nash game problem associated with dynamics \eqref{system}. In system \eqref{system},
$u(\cdot)\in l^{2}(\mathbb{N};\mathbb{R}^{m})$ and $v(\cdot)\in l^{2}(\mathbb{N};\mathbb{R}^{r})$
are viewed as the control inputs manipulated by two different players,
and given $\gamma>0$, the performance functions are defined as
\begin{align}
&J_1(x_{0}, \vartheta_{0}; u, v):=\sum_{k=0}^{\infty} \mathbf{E}\{\gamma^2\|v(k)\|^2-\|z(k)\|^2\},\nonumber\\
&J_2\left(x_{0},\vartheta_{0}; u, v\right):=\sum_{k=0}^{\infty} \mathbf{E}\{\|z(k)\|^2\}.\label{J2}
\end{align}
In this LQ nonzero-sum game problem,
Player 1 selects $v$ to minimize $J_1(x_{0}, \vartheta_{0}; u, v)$, and Player 2 selects $u$ to minimize $J_2(x_{0}, \vartheta_{0}; u, v)$.
A pair of strategies $(u, v)$ is said to be admissible for the infinite horizon two-player game if the system \eqref{system} in closed-loop with $(u, v)$ is EMSS.
The main objective of our work is to find Nash equilibrium strategies $(u^*, v^*)$ whose precise definitions are as follows.
\begin{definition}
A pair of admissible  strategies $(u^*, v^*)$,
 are called Nash equilibrium strategies
if for all admissible strategies $(u, v)$,
the following hold:
\begin{align}
&J_1\left(x_{0}, \vartheta_{0}; u^{*}, v^{*}\right) \leq J_1\left(x_{0}, \vartheta_{0}; u^{*}, v\right),\label{theworstdis1846}\\
&J_2\left(x_{0}, \vartheta_{0}; u^{*}, v^{*}\right) \leq J_2\left(x_{0},\vartheta_{0}; u, v^{*}\right).\label{theoptimalcon1848}
\end{align}
\end{definition}

The considered strategy space is given as follows:
\begin{align*}
&\Upsilon:=\{(u, v ) \in l^{2}(\mathbb{N};\mathbb{R}^{m}) \times l^{2}(\mathbb{N};\mathbb{R}^{r})| \\
& \ \ \ \ \ \ u(k)=K_{2}(\vartheta(k)) x(k),\ v(k)=K_{1}(\vartheta(k)) x(k),\ k\in\mathbb{N}\},
\end{align*}
where $K_{1}\in\mathcal{H}_{\infty}^{m\times n}$ and $K_{2}\in\mathcal{H}_{\infty}^{r\times n}$.
Specifically, both players are forced to employ linear, memoryless feedback strategies (see closed-loop no memory information structure in \cite{Basar1977IJGT}). It is essential to highlight that Nash equilibrium strategies are not necessarily linear (see \cite{Basar1974JOTA}). The reason behind selecting linear control strategies is two-fold: first, since the considered system is linear, the closed-loop system maintains its linear properties when linear control strategies are implemented; second, solving coupled-AREs to determine the linear feedback strategies is computationally more efficient. Additionally, enforcing the use of linear memoryless strategies avoids the difficulties associated with the non-uniqueness of Nash equilibrium strategies (see \cite{Basar1977IJGT}).

The following theorem provides a sufficient condition under which Nash equilibrium strategies exist for the LQ nonzero-sum game.

\begin{theorem}\label{gameth}
Assume that the following coupled-AREs:
\begin{equation} \label{areHinf}
\left\{
\begin{split}
&P_{1}(\ell)\!=\![A(\ell)\!+\!B(\ell)K_2(\ell)]^{T} \mathcal{E}(P_{1})(\ell)[A(\ell)\!+\!B(\ell)K_2(\ell)]\\
&\ \ \ \ \ \ \ \ \ \ -K_2(\ell)^{T}K_2(\ell) -C(\ell)^{T} C(\ell)\\
&\ \ \ \ \ \ \ \ \ \ -K_3(P_{1})(\ell)^{T} H_1(P_{1})(\ell)^{-1}K_3(P_{1})(\ell), \\
&H_1(P_{1})(\ell)=\gamma^{2} I+ \mathcal{T}_{F} (P_{1})(\ell) \gg 0, \ \ell\in\Theta,
\end{split}
\right.
\end{equation}
\begin{equation}\label{K1eq1477}
K_{1}(\ell)=-H_1(P_{1})(\ell)^{-1}K_3(P_{1})(\ell),\ \ \ \ \ \ \ \ \ \ \ \ \ \ \ \ \ \ \ \ \ \ \ \
\end{equation}
\begin{equation}\label{897ARE2}
\left\{
\begin{split}
&P_{2}(\ell)\!= \![A(\ell)\!+\!F(\ell)K_1(\ell) ]^{T} \mathcal{E}(P_{2})(\ell) [A(\ell)\!+\!F(\ell)K_1(\ell)]\\
&\ \ \ \ \ \!+\!C(\ell)^{T} C(\ell)\! -\!K_4(P_{2})(\ell)^{T}H_2(P_{2})(\ell)^{-1}K_4(P_{2})(\ell), \\
&H_2(P_{2})(\ell)=I+\mathcal{T}_{B}(P_{2})(\ell) \gg0,
\end{split}
\right.
\end{equation}
\begin{equation}\label{K2eq1491}
K_{2}(\ell)=-H_2(P_{2})(\ell)^{-1}K_4(P_{2})(\ell)\ \ \ \ \ \ \ \ \ \ \ \ \ \ \ \ \ \ \ \ \ \ \
\end{equation}
admit solutions $\left(P_1, K_{1}, P_2,K_{2}\right)$ with $P_{1}\in\mathcal{H}_{\infty}^{n-}$ and $ P_{2}\in\mathcal{H}_{\infty}^{n+}$,
where
\begin{eqnarray}
&K_{3}(P_{1})(\ell)=F(\ell)^{T} \mathcal{E}(P_{1})(\ell)[A(\ell)+B(\ell)K_2(\ell)],\label{equ2012K3}\\
&K_{4}(P_{2})(\ell)=B(\ell)^{T} \mathcal{E}(P_{2})(\ell)[A(\ell)+F(\ell)K_1(\ell)].\label{equ2014K4}
\end{eqnarray}
If $(A+FK_{1};C|\mathbb{G})$ is detectable, then the two-player LQ nonzero-sum game
has state feedback Nash equilibrium strategies $(u^{*},v^{*})\in \Upsilon$
with $u^{*}(k)=K_{2}(\vartheta(k))x(k)$ and $v^{*}(k)=K_{1}(\vartheta(k))x(k),$ $k\in\mathbb{N}$,
where $x(k)$ is the system state of \eqref{system}.
In this case,
$J_1\left(x_{0}, \vartheta_{0}; u^*, v^*\right)=x_{0}^{T} \mathbf{E}\{P_{1}(\vartheta_{0})\} x_{0}$
and
$J_2\left(x_{0}, \vartheta_{0}; u^*,v^*\right)=x_{0}^{T} \mathbf{E}\{P_{2}(\vartheta_{0})\} x_{0}.$
\end{theorem}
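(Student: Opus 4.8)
The plan is to verify the two Nash inequalities \eqref{theworstdis1846} and \eqref{theoptimalcon1848} one at a time, recognizing each as the optimality condition of an infinite horizon LQ problem of the type solved in Lemma \ref{inde899the}. First I would freeze Player 2 at $u=u^{*}$, i.e. $u(k)=K_{2}(\vartheta(k))x(k)$. The closed loop becomes $x(k+1)=[A(\vartheta(k))+B(\vartheta(k))K_{2}(\vartheta(k))]x(k)+F(\vartheta(k))v(k)$, and since $D^{T}D=I$ one has $\|z(k)\|^{2}=\|C(\vartheta(k))x(k)\|^{2}+\|K_{2}(\vartheta(k))x(k)\|^{2}$, so that
\begin{equation*}
J_{1}(x_{0},\vartheta_{0};u^{*},v)=\sum_{k=0}^{\infty}\mathbf{E}\{\gamma^{2}\|v(k)\|^{2}-x(k)^{T}[C(\vartheta(k))^{T}C(\vartheta(k))+K_{2}(\vartheta(k))^{T}K_{2}(\vartheta(k))]x(k)\}.
\end{equation*}
This is exactly the cost \eqref{infiJ867} for the system $(A+BK_{2},F|\mathbb{G})$ with weights $Q=-(C^{T}C+K_{2}^{T}K_{2})\in\mathcal{SH}_{\infty}^{n}$ and $R=\gamma^{2}I\in\mathcal{H}_{\infty}^{r+*}$; matching the operators $\mathcal{G},\mathcal{R},\mathcal{M}$ shows that its coupled-ARE \eqref{GCARE1} is precisely \eqref{areHinf}, with $\mathcal{M}(P_{1})=-H_{1}(P_{1})^{-1}K_{3}(P_{1})=K_{1}$ by \eqref{K1eq1477}. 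Symmetrically, freezing $v=v^{*}$ turns $J_{2}$ into the LQ cost for $(A+FK_{1},B|\mathbb{G})$ with weights $Q=C^{T}C\in\mathcal{H}_{\infty}^{n+}$ and $R=I$, whose coupled-ARE is \eqref{897ARE2} with $\mathcal{M}(P_{2})=-H_{2}(P_{2})^{-1}K_{4}(P_{2})=K_{2}$ by \eqref{K2eq1491}.

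The crux is to show that $P_{1}$ and $P_{2}$ are the \emph{stabilizing} solutions of their respective equations, which by Definition \ref{solutionmaxst993} amounts to proving that the fully closed-loop system $(A+BK_{2}+FK_{1}|\mathbb{G})$ is EMSS. To this end I would complete the square in \eqref{897ARE2}: substituting $K_{2}=-H_{2}(P_{2})^{-1}K_{4}(P_{2})$ and $K_{4}(P_{2})=B^{T}\mathcal{E}(P_{2})(A+FK_{1})$ rewrites it in the generalized Lyapunov form
\begin{equation*}
P_{2}(\ell)-\mathcal{T}_{A+FK_{1}+BK_{2}}(P_{2})(\ell)=C_{a}(\ell)^{T}C_{a}(\ell),\qquad C_{a}=\begin{bmatrix}C\\ K_{2}\end{bmatrix}.
\end{equation*}
Because $P_{2}\in\mathcal{H}_{\infty}^{n+}$, Theorem \ref{Mthe1495if} yields EMSS of $(A+FK_{1}+BK_{2}|\mathbb{G})$ once $(A+FK_{1}+BK_{2};C_{a}|\mathbb{G})$ is shown detectable. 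The hypothesis gives an $H$ with $(A+FK_{1}+HC|\mathbb{G})$ EMSS; choosing the injection $\begin{bmatrix}H & -B\end{bmatrix}$ for the augmented output $C_{a}$ produces $(A+FK_{1}+BK_{2})+\begin{bmatrix}H & -B\end{bmatrix}C_{a}=A+FK_{1}+HC$, so detectability is preserved and EMSS of the closed loop follows.

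With the closed loop EMSS, $K_{2}=\mathcal{M}(P_{2})$ exponentially stabilizes $(A+FK_{1},B|\mathbb{G})$ and $K_{1}=\mathcal{M}(P_{1})$ exponentially stabilizes $(A+BK_{2},F|\mathbb{G})$; hence $P_{2}$ and $P_{1}$ are stabilizing solutions and both reduced systems are exponentially mean-square stabilizable. Lemma \ref{inde899the} then applies to each reduced LQ problem. With $u^{*}$ frozen, $v^{*}(k)=\mathcal{M}(P_{1})(\vartheta(k))x(k)=K_{1}(\vartheta(k))x(k)$ minimizes $J_{1}(x_{0},\vartheta_{0};u^{*},\cdot)$ over all admissible $v$, giving \eqref{theworstdis1846} together with $J_{1}(x_{0},\vartheta_{0};u^{*},v^{*})=x_{0}^{T}\mathbf{E}\{P_{1}(\vartheta_{0})\}x_{0}$. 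With $v^{*}$ frozen, $u^{*}(k)=\mathcal{M}(P_{2})(\vartheta(k))x(k)=K_{2}(\vartheta(k))x(k)$ minimizes $J_{2}(x_{0},\vartheta_{0};\cdot,v^{*})$, giving \eqref{theoptimalcon1848} together with $J_{2}(x_{0},\vartheta_{0};u^{*},v^{*})=x_{0}^{T}\mathbf{E}\{P_{2}(\vartheta_{0})\}x_{0}$. Admissibility of $(u^{*},v^{*})$ is exactly the closed-loop EMSS established above, so the verification is complete.

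I expect the main obstacle to be the closed-loop stability step: carrying out the completion of the square within the Borel-space operator calculus to reach the exact Lyapunov form $C_{a}^{T}C_{a}$, and then setting up the detectability-preservation argument so that Theorem \ref{Mthe1495if} can be invoked with the given $P_{2}\in\mathcal{H}_{\infty}^{n+}$. Once the closed loop is known to be EMSS, the remaining work reduces to the already-established LQ machinery of Lemma \ref{inde899the}.
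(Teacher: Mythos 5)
Your proposal is correct and follows essentially the same route as the paper's proof: freeze $u^{*}$, recognize \eqref{areHinf} as the coupled-ARE of the reduced LQ problem with $Q=-(C^{T}C+K_{2}^{T}K_{2})$ and $R=\gamma^{2}I$; obtain EMSS of $(A+BK_{2}+FK_{1}|\mathbb{G})$ by rewriting \eqref{897ARE2} as the Lyapunov equation $P_{2}-\mathcal{T}_{A+BK_{2}+FK_{1}}(P_{2})=\bar{C}^{T}\bar{C}$ with $\bar{C}=[C^{T}\ K_{2}^{T}]^{T}$ and invoking Theorem \ref{Mthe1495if} after transferring detectability from $(A+FK_{1};C|\mathbb{G})$ to the augmented output; then conclude via Lemma \ref{inde899the}. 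The only cosmetic difference is that the paper handles the $J_{2}$ half through Corollary \ref{1727h2lq} while you apply Lemma \ref{inde899the} directly, which amounts to the same argument.
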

\begin{proof}
Construct $u^*(k)=K_2(\vartheta(k))x(k),\ k\in\mathbb{N}$ and apply it to \eqref{system}.
The resulting system and the corresponding performance function can be described as
\begin{equation}\label{a+bk21102}
\left\{\!
\begin{array}{ll}
x(k\!+\!1)\!=\![A(\!\vartheta(k))\!+\!B(\!\vartheta(k))
K_2(\!\vartheta(k))]x(k) \!+\!F(\!\vartheta(k))v(k), \\
z(k)=\left[
       \begin{array}{c}
         C(\vartheta(k))x(k) \\
         D(\vartheta(k))K_2(\vartheta(k))x(k) \\
       \end{array}
     \right],\ \forall k\in\mathbb{N},
\\
\end{array}
\right.
\end{equation}
and
\begin{equation*}
\begin{split}
&J_1\left(x_{0}, \vartheta_{0}; u^{*}, v\right)
=\sum_{k=0}^{\infty} \mathbf{E}\{\gamma^2\|v(k)\|^2\\
&-x(k)^{T}[C(\vartheta(k))^{T}C(\vartheta(k)) +K_2(\theta(k))^{T}K_2(\vartheta(k))]x(k)\}.
\end{split}
\end{equation*}
Notice that the detectability of $(A+FK_1;C|\mathbb{G})$ guarantees that $(A+B K_2+FK_1;\bar{C}|\mathbb{G})$ is detectable as well,
where
$\bar{C}(\ell)=[\begin{array}{cc}
                C(\ell)^{T} & K_2(\ell)^{T}
              \end{array}]^{T}$.
On the other hand, \eqref{897ARE2} can be rewritten as
\begin{equation*}
\begin{split}
P_{2}(\ell)&= [A(\ell)+B(\ell)K_2(\ell)+F(\ell)K_1(\ell) ]^{T} \mathcal{E}(P_{2})(\ell) \\
&\ \ \ \ \cdot[A(\ell)\!+\!B(\ell)K_2(\ell)\!+\!F(\ell)K_1(\ell)] \!+\!\bar{C}(\ell)^{T} \bar{C}(\ell)
\end{split}
\end{equation*}
with $P_{2}\in\mathcal{H}_{\infty}^{n+}$.
From Theorem \ref{Mthe1495if}, it follows that $(A+B K_2+FK_1|\mathbb{G})$ is EMSS.
Let $v^{*}(k)=K_{1}(\vartheta(k))x(k),\ k\in\mathbb{N}.$
Then, $(u^*, v^*) \in$ $l^{2}(\mathbb{N};\mathbb{R}^{m}) \times l^{2}(\mathbb{N};\mathbb{R}^{r})$.
Moreover, $v^*(k)$ exponentially stabilizes system \eqref{a+bk21102},
or equivalently, $P_{1}$ is the stabilizing solution of \eqref{areHinf}.
Thus, by Lemma \ref{inde899the}, one can conclude that $v^{*}(k)$ minimizes $J_1\left(x_{0}, \vartheta_{0}; u^{*}, v\right)$
and the corresponding optimal cost value is
$x_{0}^{T} \mathbf{E}\{P_{1}(\vartheta_{0})\} x_{0}=J_1\left(x_{0}, \vartheta_{0}; u^{*}, v^{*}\right)
\leq J_1\left(x_{0}, \vartheta_{0}; u^{*}, v\right).$

It remains to verify \eqref{theoptimalcon1848}.
To this end, similarly, consider
\begin{equation*}
\begin{split}
&J_2\left(x_{0},\vartheta_{0}; u, v^{*}\right)\\
=&\sum_{k=0}^{\infty} \mathbf{E}[x(k)^{T}C(\vartheta(k))^{T}C(\vartheta(k))x(k)+\|u(k)\|^2]
\end{split}
\end{equation*}
subject to
\begin{equation*}\label{1137a=fk1}
\left\{\!
\begin{array}{ll}
x(k\!+\!1)\!=\![A(\!\vartheta(k))\!+\!F(\!\vartheta(k))K_1(\!\vartheta(k))]x(k)
\!+\!B(\!\vartheta(k))u(k) ,\ \\
z(k)=\left[
       \begin{array}{c}
         C(\vartheta(k))x(k) \\
         D(\vartheta(k))u(k) \\
       \end{array}
     \right],\ \forall k\in\mathbb{N}.
\\
\end{array}
\right.
\end{equation*}
Since $(A+BK_2+FK_1|\mathbb{G})$ is EMSS, it means that $(A+FK_1,B|\mathbb{G})$ is exponentially mean-square stabilizable.
Additionally, in view of $(A+FK_1;C|\mathbb{G})$ being detectable,
from Corollary \ref{1727h2lq} we derive that $P_{2}$ is the stabilizing solution of coupled-AREs \eqref{897ARE2}.
Moreover, the optimal cost value is
$x_{0}^{T} \mathbf{E}\{P_{2}(\vartheta_{0})\} x_{0}=J_2\left(x_{0}, \vartheta_{0}; u^{*}, v^{*}\right)
\leq J_2\left(x_{0}, \vartheta_{0}; u, v^{*}\right)$
with $u^{*}(k)=K_{2}(\vartheta(k))x(k)$.
Therefore, it is claimed that the two-player LQ nonzero-sum game
has state feedback Nash equilibrium strategies $(u^{*},v^{*})\in \Upsilon$.
\end{proof}

\subsection{Application to the Mixed $H_2 / H_{\infty}$ Control}\label{subsec4}
Based on the aforementioned two-player Nash game,
this subsection deals with the infinite horizon $H_2 / H_{\infty}$ control problem.
To ensure that the associated the $H_{\infty}$ norm is well-defined
and the BRL can be used properly,
we assume that $\nu_{0}(\ell)>0$ $\mu\text{-}a.e.$ and $\int_{\Theta}g(t,\ell)\mu(dt)>0$ $\mu\text{-}a.e.$ in the remainder.

\begin{definition}\label{DefH2Hinf2069}
Given a disturbance attenuation level $\gamma>0$, find a control strategy $u^* \in l^{2}(\mathbb{N};\mathbb{R}^{m})$ such that

$(i)$~$u^*(k)$ exponentially stabilizes system  \eqref{system} internally,
i.e. when $v(k) \equiv0$ and $u(k)=u^*(k),\ k\in\mathbb{N}$, system \eqref{system} is EMSS;

$(ii)$~$\|\mathcal{L}_{u^*}\|<\gamma $ with
\small
\begin{equation*}
\begin{split}&\|\mathcal{L}_{u^*}\|\\
=&\esssup _{\substack{v \in l^{2}(\mathbb{N};\mathbb{R}^{r}),
 \\ v \neq 0, \ell\in{\Theta},\\  x_{0}=0 }}
 \frac{\left\{\sum_{k=0}^{\infty} \mathbf{E}\!\left[\|C(\vartheta(k)) x(k)\|^2\!+\!\|u^*(k)\|^2|\vartheta_0\!=\!\ell\right]\right\}^{\frac{1}{2}}}
 {\left\{\sum_{k=0}^{\infty} \mathbf{E}\left[\|v(k)\|^2|\vartheta_0=\ell\right]\right\}^{\frac{1}{2}}};
\end{split}
\end{equation*}

$(iii)$~When the worst-case disturbance (see \cite{Limebeer1994h2hinfNash}) $v^* \in l^{2}(\mathbb{N};\mathbb{R}^{r})$ is implemented,
where $v^{*}=
\mathop{argmin}\limits_{v}\{J_{1}(x_{0},\vartheta_{0};u^{*},v)\}$,
$u^*$ minimizes the performance function $J_2\left(x_{0},\vartheta_{0}; u, v\right)$ given by \eqref{J2}.\\
If there exist $\left(u^*, v^*\right)\in l^{2}(\mathbb{N};\mathbb{R}^{m}) \times l^{2}(\mathbb{N};\mathbb{R}^{r})$ that satisfy $(i)$-$(iii)$,
then we call that the infinite horizon $H_2 / H_{\infty}$ control problem is solvable and has a pair of solutions $\left(u^*, v^*\right)$.
\end{definition}
\begin{definition}
Given a disturbance attenuation level $\gamma>0$, if there exists $u^*\in l^{2}(\mathbb{N};\mathbb{R}^{m})$ that satisfies only $(i)$ and $(ii)$ in Definition \ref{DefH2Hinf2069} but not necessarily satisfies $(iii)$,
then we call that the infinite horizon $H_{\infty}$ control problem is solvable and has a solution $u^*$.
\end{definition}

The following result can be directly achieved by applying the BRL established in \cite{Xiao2023} to system \eqref{a+bk21102}.
\begin{lemma}\label{BRL2198}
Given $\gamma>0$, for any $K_{2}\in\mathcal{H}^{r\times n}_{\infty}$, the following assertions are equivalent:

$(i)$~$u^{*}(k)=K_{2}(\vartheta(k))x(k),\  k\in\mathbb{N}$ exponentially stabilizes system  \eqref{system} internally,
and $\|\mathcal{L}_{u^*}\|<\gamma$;

$(ii)$~The coupled-AREs \eqref{areHinf} admit the stabilizing solution $P_{1}\in\mathcal{H}_{\infty}^{n-}$,
where $K_{3}(P_{1})$ is given by \eqref{equ2012K3}.
\end{lemma}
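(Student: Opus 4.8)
The plan is to read this equivalence off directly from the bounded real lemma (BRL) of \cite{Xiao2023}, applied to the closed loop obtained by imposing $u^{*}(k)=K_{2}(\vartheta(k))x(k)$ on \eqref{system}. First I would substitute this feedback into \eqref{system}, which produces exactly the perturbed MJLS \eqref{a+bk21102}: a system with state matrix $A+BK_{2}$, disturbance entering through $F$, and regulated output $z(k)=[\,C(\vartheta(k))x(k);\,D(\vartheta(k))K_{2}(\vartheta(k))x(k)\,]$. The one point needing care is the output. Using $D(\ell)^{T}D(\ell)=I$ from $(iii)$ of Assumption \ref{Assumption1}, we have $\|z(k)\|^{2}=x(k)^{T}[C(\vartheta(k))^{T}C(\vartheta(k))+K_{2}(\vartheta(k))^{T}K_{2}(\vartheta(k))]x(k)=\|\bar{C}(\vartheta(k))x(k)\|^{2}$ with $\bar{C}(\ell)=[\,C(\ell)^{T}\ \ K_{2}(\ell)^{T}\,]^{T}$, exactly as in the proof of Theorem \ref{gameth}. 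Since $C$ and $K_{2}$ both lie in $\mathcal{H}_{\infty}$, so does $\bar{C}$, and hence \eqref{a+bk21102} is, after this harmless relabelling of the output, a standard MJLS of the form covered by the BRL of \cite{Xiao2023}.

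Next I would invoke that BRL verbatim: the feedback $u^{*}$ stabilizes \eqref{a+bk21102} internally, i.e. $(A+BK_{2}|\mathbb{G})$ is EMSS, and achieves $\|\mathcal{L}_{u^{*}}\|<\gamma$ if and only if the Riccati equation associated with \eqref{a+bk21102} admits a stabilizing solution in $\mathcal{H}_{\infty}^{n-}$. The remaining work is a term-by-term comparison of that Riccati equation with \eqref{areHinf}: the state term is $(A+BK_{2})^{T}\mathcal{E}(P_{1})(A+BK_{2})$; the output term is $-\bar{C}^{T}\bar{C}=-C^{T}C-K_{2}^{T}K_{2}$; the disturbance block is $H_{1}(P_{1})=\gamma^{2}I+\mathcal{T}_{F}(P_{1})$; and the cross term is assembled from $K_{3}(P_{1})=F^{T}\mathcal{E}(P_{1})(A+BK_{2})$, which is precisely \eqref{equ2012K3}. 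These coincide with \eqref{areHinf}, so the stabilizing $\mathcal{H}_{\infty}^{n-}$ solution supplied by the BRL is exactly $P_{1}$, and both implications of the equivalence follow.

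The only substantive item to verify — more a matter of matching hypotheses than a genuine obstacle — is that the norm $\|\mathcal{L}_{u^{*}}\|$ of Definition \ref{DefH2Hinf2069}, defined through the conditional expectations $\mathbf{E}\{\cdot\mid\vartheta_{0}=\ell\}$ and an essential supremum over $\ell$, is the very quantity the BRL of \cite{Xiao2023} characterizes. This is where the standing assumptions placed just before Definition \ref{DefH2Hinf2069}, namely $\nu_{0}(\ell)>0$ $\mu$-$a.e.$ and $\int_{\Theta}g(t,\ell)\mu(dt)>0$ $\mu$-$a.e.$, are used: they ensure that the input--output map conditioned on the initial mode is well defined $\mu$-$a.e.$, so that the essential supremum is nondegenerate and the $H_{\infty}$ norm agrees with the one appearing in \cite{Xiao2023}. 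With this identification in place, no further computation is required.
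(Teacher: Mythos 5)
Your proposal is correct and follows exactly the route the paper takes: the paper's entire justification for Lemma \ref{BRL2198} is the one-line remark that it is ``directly achieved by applying the BRL established in \cite{Xiao2023} to system \eqref{a+bk21102}.'' Your write-up simply makes explicit the details the paper leaves implicit --- the relabelling of the output as $\bar{C}x$ via $D^{T}D=I$, the term-by-term matching of the closed-loop Riccati equation with \eqref{areHinf}, and the role of the standing assumptions in making $\|\mathcal{L}_{u^{*}}\|$ well defined.
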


The existence conditions of an infinite horizon $H_2 / H_{\infty}$ controller are provided as follows:
\begin{theorem}\label{H2Hinfty2216}
($H_2 / H_{\infty}$ Control)
The following assertions hold:

$(i)$~Assume that coupled-AREs \eqref{areHinf}-\eqref{K2eq1491} admit solutions $\left(P_1, K_{1}, P_2,K_{2}\right)$
with $P_{1}\in\mathcal{H}_{\infty}^{n-}$ and $ P_{2}\in\mathcal{H}_{\infty}^{n+}$,
where $K_3(P_{1})$ and $K_{4}(P_{2})$ are given by \eqref{equ2012K3} and \eqref{equ2014K4}, respectively.
If $(A+FK_{1};C|\mathbb{G})$ is detectable,
then the infinite horizon  $H_2/H_{\infty}$ control problem is solved by $(u^{*},v^{*})$
with $u^{*}(k)=K_{2}(\vartheta(k))x(k)$ and $v^{*}(k)=K_{1}(\vartheta(k))x(k),$ $k\in\mathbb{N}$,
where $x(k)$ is the system state of \eqref{system};

$(ii)$~Assume that $(A+FK_{1};C|\mathbb{G})$ is detectable and
the infinite horizon  $H_2/H_{\infty}$ control problem is solved by $(u^{*},v^{*})$
with $u^{*}(k)=K_{2}(\vartheta(k))x(k)$ and $v^{*}(k)=K_{1}(\vartheta(k))x(k),$ $k\in\mathbb{N}$,
where $x(k)$ is the system state of \eqref{system},
then coupled-AREs \eqref{areHinf}-\eqref{K2eq1491} admit solutions $\left(P_1, P_2\right)$
with $P_{1}\in\mathcal{H}_{\infty}^{n-}$ and $ P_{2}\in\mathcal{H}_{\infty}^{n+}$.
\end{theorem}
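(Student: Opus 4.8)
The plan is to reduce both assertions to the Nash game result of Theorem \ref{gameth}, the bounded real lemma (Lemma \ref{BRL2198}), and the standard LQ result (Corollary \ref{1727h2lq}), after observing that the worst-case disturbance of Definition \ref{DefH2Hinf2069}$(iii)$ is precisely Player~1's Nash strategy.

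For the sufficiency part $(i)$, I would first invoke Theorem \ref{gameth}, whose hypotheses coincide exactly with those assumed here. This yields that $(u^*,v^*)$ with $u^*=K_2(\vartheta)x$ and $v^*=K_1(\vartheta)x$ are Nash equilibrium strategies, and moreover (from the proof of Theorem \ref{gameth}) that $P_1$ is the stabilizing solution of \eqref{areHinf} and that the doubly closed-loop system $(A+BK_2+FK_1|\mathbb{G})$ is EMSS. Conditions $(i)$ and $(ii)$ of Definition \ref{DefH2Hinf2069} then follow at once from the implication $(ii)\Rightarrow(i)$ in Lemma \ref{BRL2198}, since $P_1\in\mathcal{H}_{\infty}^{n-}$ is the stabilizing solution of \eqref{areHinf}. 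For condition $(iii)$, I would note that the first Nash inequality \eqref{theworstdis1846} states exactly that $v^*$ minimizes $J_1(x_0,\vartheta_0;u^*,\cdot)$, so $v^*$ is the worst-case disturbance in the sense of Definition \ref{DefH2Hinf2069}$(iii)$; the second Nash inequality \eqref{theoptimalcon1848} then asserts that $u^*$ minimizes $J_2(x_0,\vartheta_0;\cdot,v^*)$, which is precisely $(iii)$. The cost identities are read off directly from Theorem \ref{gameth}.

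For the necessity part $(ii)$, I would argue in the reverse direction. Since the problem is solved, $u^*=K_2(\vartheta)x$ satisfies conditions $(i)$ and $(ii)$; applying the direction $(i)\Rightarrow(ii)$ of Lemma \ref{BRL2198} produces the stabilizing solution $P_1\in\mathcal{H}_{\infty}^{n-}$ of \eqref{areHinf}, and I would set $K_1=-H_1(P_1)^{-1}K_3(P_1)$ as in \eqref{K1eq1477}, with $K_3(P_1)$ given by \eqref{equ2012K3}. By the LQ characterization in Lemma \ref{inde899the}, the worst-case disturbance associated with $u^*$ is the unique feedback $v^*(k)=K_1(\vartheta(k))x(k)$ carrying exactly this $K_1$; since Definition \ref{DefH2Hinf2069}$(iii)$ forces the prescribed $v^*$ to equal the worst-case disturbance, the gain of the given $v^*$ agrees with $K_1$, so \eqref{K1eq1477} and \eqref{equ2012K3} hold. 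Next, fixing $v=v^*$ reduces the plant to $(A+FK_1,B|\mathbb{G})$; admissibility of $(u^*,v^*)$ makes $(A+BK_2+FK_1|\mathbb{G})$ EMSS, whence $K_2$ exponentially stabilizes $(A+FK_1,B|\mathbb{G})$ and this pair is exponentially mean-square stabilizable. Combining this with the detectability of $(A+FK_1;C|\mathbb{G})$ and invoking Corollary \ref{1727h2lq} delivers the unique stabilizing solution $P_2\in\mathcal{H}_{\infty}^{n+}$ of \eqref{897ARE2} together with the optimal feedback $\mathcal{M}(P_2)=-H_2(P_2)^{-1}K_4(P_2)$; by uniqueness of the $J_2$-minimizer from condition $(iii)$ this coincides with $K_2$, so \eqref{K2eq1491} and \eqref{equ2014K4} hold.

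The main obstacle I anticipate is not the existence of $P_1,P_2$ but the \emph{matching} step: one must verify that the feedback gains $K_1,K_2$ carried by the prescribed solution $(u^*,v^*)$ actually coincide with those generated by the stabilizing solutions $P_1,P_2$. This hinges on the uniqueness of the worst-case disturbance and of the $J_2$-optimal control, which I would draw from the uniqueness statements in Lemma \ref{inde899the} and Corollary \ref{1727h2lq}. A secondary point requiring care is deducing the exponential mean-square stabilizability of $(A+FK_1,B|\mathbb{G})$ purely from EMSS of the doubly closed-loop system, which is exactly where the admissibility hypothesis on $(u^*,v^*)$ enters.
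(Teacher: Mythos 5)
Your proposal is correct and follows essentially the same route as the paper: part $(i)$ via Theorem \ref{gameth} combined with Lemma \ref{BRL2198}, and part $(ii)$ via Lemma \ref{BRL2198} to obtain $P_{1}$, Lemma \ref{inde899the} to identify $K_{1}$ as the unique worst-case disturbance gain, and Corollary \ref{1727h2lq} (with the stabilizability of $(A+FK_{1},B|\mathbb{G})$ drawn from admissibility) to obtain $P_{2}$ and match $K_{2}$. The "matching" step you flag is exactly where the paper also invokes uniqueness of the respective minimizers, so no gap remains.
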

\begin{proof}
$(i)$~By Theorem \ref{gameth}, it can be concluded that $(u^{*},v^{*})$ satisfies \eqref{theworstdis1846} and \eqref{theoptimalcon1848},
and specifically $v^{*}$ serves as the worst-case disturbance and $u^*$ minimizes \eqref{J2}.
Moreover, from the proof of Theorem \ref{gameth},
$P_{1}$ is confirmed to be the stabilizing solution of \eqref{areHinf}.
Applying Lemma \ref{BRL2198} yields that $u^{*}(k)=K_{2}(\vartheta(k))x(k)$ meets $(i)$ and $(ii)$ of Definition \ref{DefH2Hinf2069}.

$(ii)$~Since $(u^{*},v^{*})$ solves the  $H_2/H_{\infty}$ control problem,
from Lemma \ref{BRL2198} we have that coupled-AREs \eqref{areHinf} admit a stabilizing solution $P_{1}\in\mathcal{H}_{\infty}^{n-}$,
i.e., $(A+B K_2-F H_1(P_{1})^{-1}K_3(P_{1})|\mathbb{G})$ is EMSS.
Hence, $(A+BK_2,F|\mathbb{G})$ is exponentially mean-square stabilizable.
Applying Lemma \ref{inde899the},
it follows from $H_1(P_{1})(\ell) \gg 0$ that
$$v^{*}(k)=-H_1(P_{1})(\vartheta(k))^{-1}K_3(P_{1})(\vartheta(k))x(k),\ k\in\mathbb{N}$$
is the unique strategy to achieve the minimal value of $J_1\left(x_{0}, \vartheta_{0}; u^{*}, v\right)$.
This shows that \eqref{K1eq1477} holds $\mu$-$a.e.$.
Now $(A+B K_2+FK_1|\mathbb{G})$ is EMSS, i.e., $(A+FK_1,B|\mathbb{G})$ is exponentially mean-square stabilizable.
In addition, noting that $(A+FK_{1};C|\mathbb{G})$ is detectable,
from Corollary \ref{1727h2lq} we conclude that coupled-AREs \eqref{897ARE2} admit a unique stabilizing solution $P_{2}\in\mathcal{H}_{\infty}^{n+}$.
Moreover, the minimal value of $J_2\left(x_{0},\vartheta_{0}; u, v^{*}\right)$ is achieved by the unique optimal control
$$u^{*}(k)=-H_2(P_{2})(\vartheta(k))^{-1}K_4(P_{2})(\vartheta(k))x(k),\ k\in\mathbb{N},$$
and thereby \eqref{K2eq1491} holds $\mu$-$a.e.$.
\end{proof}

The subsequent corollary, derived from Lemma \ref{BRL2198}, describes how to design an infinite horizon $H_{\infty}$ controller.
\begin{corollary}\label{Hinfty2196}
($H_{\infty}$ Control)
The following assertions hold:

$(i)$~Assume that coupled-AREs \eqref{areHinf}, \eqref{K1eq1477}, and
\begin{equation}\label{K22201}
K_{2}(\ell)=-H_2(-P_{1})(\ell)^{-1}K_4(-P_{1})(\ell)
\end{equation}
admit solutions $\left(P_1, K_{1}, K_{2}\right)$ with $P_{1}\in\mathcal{H}_{\infty}^{n-}$,
where $K_3(P_{1})$ is given by \eqref{equ2012K3},
$$K_{4}(-P_{1})(\ell)=B(\ell)^{T} \mathcal{E}(-P_{1})(\ell)[A(\ell)+F(\ell)K_1(\ell)],$$
and
$$H_2(-P_{1})(\ell)=I+\mathcal{T}_{B}(-P_{1})(\ell).$$
If $(A+BK_{2}+FK_{1}|\mathbb{G})$ is EMSS,
then the infinite horizon $H_{\infty}$ control problem is solved by $u^{\infty}(k)=K_{2}(\vartheta(k))x(k)$,
$k\in\mathbb{N}$,
where $x(k)$ is the system state of \eqref{system};

$(ii)$~If the infinite horizon $H_{\infty}$ control problem is solved by $u^{\infty}(k)=K_{2}(\vartheta(k))x(k)$, $k\in\mathbb{N}$,
where $x(k)$ is the system state of \eqref{system},
then coupled-AREs \eqref{areHinf} admit the stabilizing solution $P_{1}\in\mathcal{H}_{\infty}^{n-}$.
\end{corollary}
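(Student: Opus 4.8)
The plan is to read both parts of the corollary as pure applications of the bounded real lemma (Lemma \ref{BRL2198}) to the closed-loop system \eqref{a+bk21102} obtained by imposing $u^\infty(k)=K_2(\vartheta(k))x(k)$ on \eqref{system}; essentially no new analysis is needed beyond matching the $H_\infty$ solvability notion of Definition \ref{DefH2Hinf2069} with the existence of a stabilizing solution of \eqref{areHinf}. The one preparatory identification I would record is that \eqref{areHinf} is precisely the coupled-ARE \eqref{GCARE1} for the data $A\rightarrow A+BK_2$, $G\rightarrow F$, $R\rightarrow\gamma^{2}I$, and $Q\rightarrow-(C^{T}C+K_2^{T}K_2)$: feeding these into the operators of Section \ref{maxstaGCARE} gives $\mathcal{G}(P_1)=K_3(P_1)$, $\mathcal{R}(P_1)=H_1(P_1)$, and hence $\mathcal{M}(P_1)=-H_1(P_1)^{-1}K_3(P_1)=K_1$ by \eqref{K1eq1477}.

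For part $(i)$ I would first upgrade the statement ``$P_1$ solves \eqref{areHinf}'' to ``$P_1$ is the \emph{stabilizing} solution of \eqref{areHinf}.'' By Definition \ref{solutionmaxst993}$(ii)$, the latter demands that $v^{*}(k)=\mathcal{M}(P_1)(\vartheta(k))x(k)=K_1(\vartheta(k))x(k)$ exponentially stabilize \eqref{a+bk21102}, that is, that $(A+BK_2+FK_1|\mathbb{G})$ be EMSS; this is exactly the hypothesis of part $(i)$, while $H_1(P_1)\gg 0$ is already built into \eqref{areHinf}. Thus $P_1\in\mathcal{H}_\infty^{n-}$ is the stabilizing solution, which is assertion $(ii)$ of Lemma \ref{BRL2198}. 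The implication $(ii)\Rightarrow(i)$ of that lemma then delivers that $u^\infty$ internally exponentially stabilizes \eqref{system} and satisfies $\|\mathcal{L}_{u^\infty}\|<\gamma$, that is, conditions $(i)$ and $(ii)$ of Definition \ref{DefH2Hinf2069}, so $u^\infty$ solves the $H_\infty$ control problem. I would emphasize that internal stability of $(A+BK_2|\mathbb{G})$ is not assumed separately but is itself a consequence of the bounded real lemma.

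For part $(ii)$ the argument is the reverse implication and is immediate: if $u^\infty(k)=K_2(\vartheta(k))x(k)$ solves the $H_\infty$ problem, then by definition it internally exponentially stabilizes \eqref{system} and satisfies $\|\mathcal{L}_{u^\infty}\|<\gamma$, which is assertion $(i)$ of Lemma \ref{BRL2198}; invoking $(i)\Rightarrow(ii)$ returns the stabilizing solution $P_1\in\mathcal{H}_\infty^{n-}$ of \eqref{areHinf}. The only place I expect to need care---the main, if modest, obstacle---is the bookkeeping in part $(i)$: confirming that the abstract ``stabilizing solution'' of Definition \ref{solutionmaxst993}$(ii)$ for the above data coincides with the supplied hypotheses, and observing that \eqref{K22201} serves merely to pin down one admissible gain $K_2\in\mathcal{H}_\infty^{m\times n}$ and plays no further role in the verification.
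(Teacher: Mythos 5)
Your proposal is correct and follows exactly the route the paper intends: the paper gives no explicit proof of Corollary \ref{Hinfty2196}, stating only that it is ``derived from Lemma \ref{BRL2198},'' and your argument spells out precisely that derivation — identifying \eqref{areHinf} as \eqref{GCARE1} under the substitution $A\to A+BK_2$, $G\to F$, $R\to\gamma^2 I$, recognizing the EMSS hypothesis on $(A+BK_2+FK_1|\mathbb{G})$ as the statement that $P_1$ is the stabilizing solution, and then invoking the two directions of the bounded real lemma. Your observations that internal stability of $(A+BK_2|\mathbb{G})$ comes for free from the BRL and that \eqref{K22201} only fixes the gain $K_2$ are both accurate.
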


\section{Numerical Examples}\label{anexample}
\begin{example}\label{1468Solarthermal}
Consider the solar thermal receiver (proposed in \cite{Costa2015}) modeled by
\begin{equation}\label{biaoliangequ}
\left\{
\begin{array}{ll}
x(k+1)=a(\vartheta(k))x(k)+b(\vartheta(k))u(k),\\ z(k)=c(\vartheta(k))x(k),\ k\in\mathbb{N},\\
\end{array}
\right.
\end{equation}
in which the Markov chain $\{\vartheta(k), k\in\mathbb{N}\}$ takes values in $\Theta$.
Here, $\Theta=\Theta_{1}\bigcup\Theta_{2}$ and the measure $\mu$ on $\mathcal{B}(\Theta)$ is given as $\mu(\{i\}\times M)=\mu_{L}(M)$,
where $\Theta_{i}=\{i\times [0,1]\}$, $i\in\{1,2\}$,
$M\in{\mathcal{B}([0,1])}$, and $\mu_{L}(\cdot)$ is the Lebesgue measure on $\mathcal{B}([0,1])$.
The stochastic kernel of the Markov chain is defined as
$\mathbb{G}(\ell, j\times M)=\mathbb{P}(\vartheta(k+1)\in j\times M|\vartheta(k)=\ell)=P_{ij}\cdot\mu_{L}(M)$ for $\ell\in{\Theta_{i}}$
and $M\in{\mathcal{B}([0,1])}$, where $i,j\in\{1,2\}$.
In \eqref{biaoliangequ}, for $\vartheta(k)=(i,t)$, $i=1\ (\text{or}\ i=2)$ represents the weather condition being sunny (or cloudy),
while the variable $t\in [0,1]$ represents the effect of the instantaneous solar radiation on the system parameters
under the sunny (or cloudy) weather condition.
In light of this, $a(\cdot)$ changes with the variation of the instantaneous solar radiation
under the weather condition being sunny (or cloudy),
and follows $a(\ell)=a_{11}+t(a_{12}-a_{11})$ for $\ell=(1,t)$ (or $a(\ell)=a_{21}+t(a_{22}-a_{21})$ for $\ell=(2,t)$).
$b(\ell)=b_{i}$ and $c(\ell)=c$ for $\ell\in{\Theta_{i}}$, $i\in\{1,2\}$.
Set $a_{11}=0.93,\ a_{12}=0.73,\ a_{21}=0.94,\ a_{22}=1.1,$ $b_{1}=0.0915,\ b_{2}=0.0982,\ c=0.1885,$ $P_{11}=0.9767$, and $P_{22}=0.7611$.
It can be verified that $v(i,t)=[a(i,t)]^{2}$ and $k(i,t)=-t$ solve
\begin{equation*}
\begin{split}
&v(i,t)-[a(i,t)+b(i,t)k(i,t)]^{T}
\left[\sum_{j=1}^{2}\int^{1}_{0} P_{ij}v(j,s)ds\right]\\
&\ \ \ \ \ \ \ \ \ \cdot [a(i,t)+b(i,t)k(i,t)]\gg 0,
\end{split}
\end{equation*}
and $u(i,t)=[a(i,t)]^{2}$ and $h(i,t)=-t$ solve
\begin{equation*}
\begin{split}
&u(i,t)-[a(i,t)+h(i,t)c(i,t)]^{T}\left[\sum_{j=1}^{2}\int^{1}_{0} P_{ij}u(j,s)ds\right]\\
&\ \ \ \ \ \ \ \ \ \cdot [a(i,t)+h(i,t)c(i,t)]\gg 0
\end{split}
\end{equation*}
for any  $i\in\{1,2\}$ and $t\in[0,1]$.
By Theorem \ref{EMSSCiff}, we deduce that $(a,b|\mathbb{G})$ is exponentially mean-square stabilizable and $(a;c|\mathbb{G})$ is detectable.
\end{example}
\begin{example}\label{1468comparison}
In this example, consider a two-dimensional MJLS \eqref{system} with the Markov chain $\{\vartheta(k), k\in\mathbb{N}\}$
on the Borel space $(\Theta, \mathcal{B}(\Theta))$.
Here, $\Theta=\Theta_{1}\bigcup\Theta_{2}$, where $\Theta_{i}=\{i\times [0,1]\}$ for $i\in\{1,2\}$.
The measure $\mu$ on $\mathcal{B}(\Theta)$ is defined as $\mu(\{i\}\times M)=\mu_{L}(M)$,
$M\in\mathcal{B}([0,1])$.
The stochastic kernel of the Markov chain is given by
$\mathbb{G}(\ell, j\times M)=\mathbb{P}(\vartheta(k+1)\in j\times M|\vartheta(k)=\ell)=p_{ij}\cdot\mu_{L}(M)$ for $\ell\in{\Theta_{i}}$ and $M\in{\mathcal{B}([0,1])}$, where $i,j\in\{1,2\}$.
For any $\ell=(i,t)\in\Theta_{i}$ with $i\in\{1,2\}$ and $t\in[0,1]$,
let $A(\ell)=A(i,t)=A_{i1}+t(A_{i2}-A_{i1})$, $B(\ell)=B(i,t)=B_{i1}+t(B_{i2}-B_{i1})$, $C(\ell)=C(i,t)=tC_{i}$,
$F(\ell)=F(i,t)=tF_{i}$, and $D(\ell)=D(i,t)=D_{i}$.
The coefficients can be found in Table \ref{tabSyCoe}.
\begin{table}[h!t]
\setlength{\tabcolsep}{3pt}
   \centering
   \caption{Coefficients in Example \ref{1468Solarthermal}}\label{tabSyCoe}
\begin{tabular}{c c  c c c}\toprule[1pt]
Coefficients & $i=1$ & $i=2$
\\ \toprule[1pt]
$A_{i1}$&
$\left[
   \begin{array}{cc}
      2 & -1 \\
      0 & 0 \\
      \end{array}
\right]$
& $\left[
   \begin{array}{cc}
      0 & 1 \\
     0 & 2\\
      \end{array}
\right]$
\\
$A_{i2}$&
$\frac{1}{2}\times\left[
   \begin{array}{cc}
      2 & -1 \\
      0 & 0 \\
      \end{array}
\right]$
& $\frac{1}{2}\times\left[
   \begin{array}{cc}
      0 & 1 \\
     0 & 2\\
      \end{array}
\right]$
\\
$B_{i1}$&
$\left[
   \begin{array}{c}
      -0.6  \\
      -0.1 \\
      \end{array}
\right]$
& $\left[
   \begin{array}{c}
      -0.8 \\
     -1\\
      \end{array}
\right]$
\\
$B_{i2}$&
$\left[
   \begin{array}{c}
      1.3  \\
      0.7 \\
      \end{array}
\right]$
& $\left[
   \begin{array}{c}
      0.3 \\
     1.6\\
      \end{array}
\right]$
\\
$F_{i}$&
$\left[
   \begin{array}{c}
      0.4  \\
      -0.2 \\
      \end{array}
\right]$
& $\left[
   \begin{array}{c}
      -0.9 \\
     0.3\\
      \end{array}
\right]$
\\
$C_{i}$ &
$\left[
   \begin{array}{cc}
      -0.3 & -0.3 \\
      \end{array}
\right]$
&
$\left[
   \begin{array}{cc}
      0.4 &0.2\\
      \end{array}
\right]$
\\
$D_{i}$&
1
& 1
\\
$p_{ii}$&
0.15
& 0.1
\\
\hline
\end{tabular}
\end{table}

Set the disturbance attenuation level as $\gamma=0.5$.
We now deal with the infinite horizon $H_{2}/H_{\infty}$ control problem.
It should be noted that,
to obtain the numerical solutions of the integral coupled Riccati equations \eqref{areHinf}-\eqref{K2eq1491},
it is necessary to discretize the continuous term in $\Theta$.
Here we consider a finite grid approximation for $\Theta$.
The numerical solutions $\left(P_1, K_{1}, P_2,K_{2}\right)$ can be obtained through a backward iterative algorithm
and are visualized in Fig. \ref{picP1}-Fig. \ref{picK2}.
The algorithm adopted, similar to those in \cite{Hou2013JGO} and \cite{Zhangwh2008Auto},
involves iteratively computing the solutions of recursive equations:
\begin{equation*}
\begin{split}
&K_{3}(k)=F^{T}\mathcal{E}(P_{1}(k+1))(A+BK_{2}(k+1)),\\
&K_{4}(k)=B^{T}\mathcal{E}(P_{2}(k+1))(A+FK_{1}(k+1)),\\
&H_{1}(k+1)=\gamma^{2} \mathcal{I}+\mathcal{T}_{F}(P_{1}(k+1)),\\
&H_{2}(k+1)=\mathcal{I}+\mathcal{T}_{B}(P_{2}(k+1)),\\
&K_{1}(k)=-H_{1}(k+1)^{-1}K_{3}(k),\\
&K_{2}(k)=-H_{2}(k+1)^{-1}K_{4}(k),\\
&P_{1}(k)=\mathcal{T}_{A +BK_{2}(k)}(P_{1}(k+1))
-K_2(k)^{T}K_2(k)\\
&\ \ \ \ \ \ \ \ \ \ -C^{T} C-K_3(k)^{T} H_1(k+1)^{-1}K_3(k),\\
&P_{2}(k)=\mathcal{T}_{A +FK_{1}(k)}(P_{2}(k+1))
+C^{T} C\\
&\ \ \ \ \ \ \ \ \ \ -K_4(k)^{T} H_2(k+1)^{-1}K_4(k)
\end{split}
\end{equation*}
with the terminal value conditions
$P_{1}^{\tau}(\tau+1)=0$, $P_{2}^{\tau}(\tau+1)=0$,
$K_{1}^{\tau}(\tau+1)=0$, and $K_{2}^{\tau}(\tau+1)=0$, where $\tau\in\mathbb{N}$.
It is found that there exists a feasible solution
$U(i,t)=U_i$
for the inequality
\begin{equation*}
\begin{split}
U(i,t)-[&A(i,t)+F(i,t)K_{1}(i,t)]^{T}\left[\sum_{j=1}^{2}\int^{1}_{0} p_{ij}U(j,s)ds\right]\\
& \cdot [A(i,t)+F(i,t)K_{1}(i,t)]- I\geq 0,
\end{split}
\end{equation*}
where $i\in\{1,2\}$ and $t\in[0,1]$.
Here,
$$U_{1}=10^{3}\times\left[
                \begin{array}{cc}
                    1.3438  & -0.6177\\
                   -0.6177  &  0.4501\\
                \end{array}
              \right],$$
and
$$U_{2}=10^{3}\times\left[
                \begin{array}{cc}
                 0.1104  & -0.0044\\
                 -0.0044 &   1.3873\\
                \end{array}
              \right].$$
Hence, we conclude that $(A+FK_{1};C|\mathbb{G})$ is detectable.
On the other hand, it can be tested that $U(i,t)=U_i$ is also a feasible solution to the inequality
\begin{equation*}
\begin{split}
U(i,t)-&[A(i,t)+B(i,t)K_{2}(i,t)]^{T}
\left[\sum_{j=1}^{2}\int^{1}_{0} p_{ij}U(j,s)ds\right]\\
&\cdot[A(i,t)+B(i,t)K_{2}(i,t)]-I\geq 0,
\end{split}
\end{equation*}
which yields that $(A+BK_{2}|\mathbb{G})$ is EMSS.
Let $\vartheta_{0}$ be a random variable that follows
the distribution $\mu_{0}(\vartheta_{0}\in i\times M)=0.5\mu_{L}(M)$
for any $M\in\mathcal{B}([0,1])$ with $i\in\{1,2\}$.
In Fig. \ref{A+BK2_x1x2}, some trajectories of the system state of $(A+BK_{2}|\mathbb{G})$ with the initial conditions $(x_{0},\vartheta_{0})$ are plotted, in which the expected values ($\mathbf{E}[x_{h}(k)]=\frac{1}{1000}\sum_{s=1}^{1000}x^{s}_{h}(k)$,
and $x^{s}_{h}$ is the $s$-th sample path trajectory of $x_{h}$, $h\in\{1,2\}$) are also marked and
$x_{0}=\left[
\begin{array}{cc}
  2 & -2 \\
  \end{array}
  \right]^{T}$.
\begin{figure}
  \centering
  \includegraphics[width=\columnwidth]{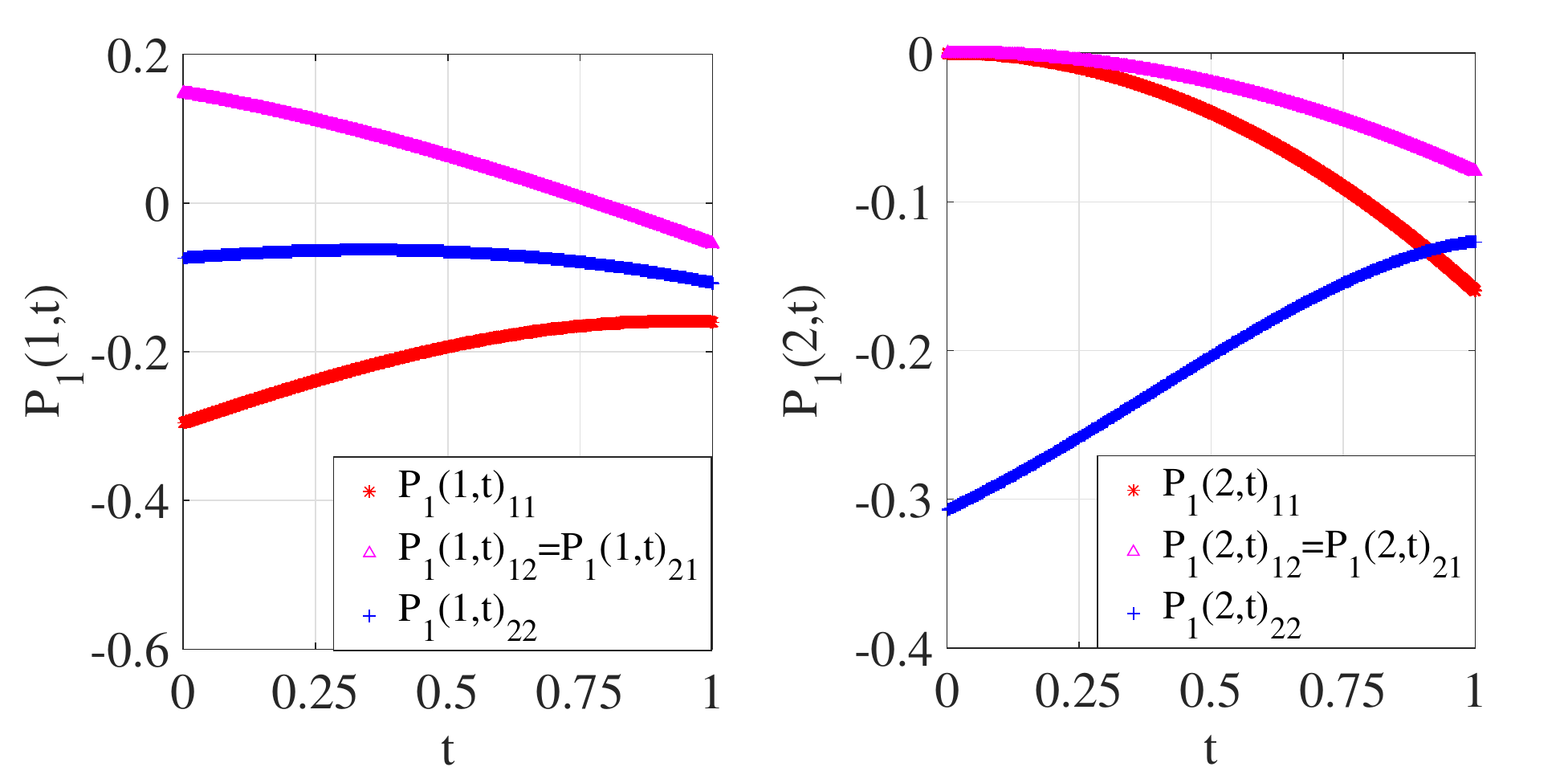}\\
  \caption{$P_1(1,t)$ $(P_1(2,t))$ is plotted on the left (right), and $P_1(i,t)_{hl}$ is the $(h,l)$-th entry of $P_1(i,t)$,
  $h, l\in\{1,2\}$.}\label{picP1}
\end{figure}
\begin{figure}
  \centering
  \includegraphics[width=\columnwidth]{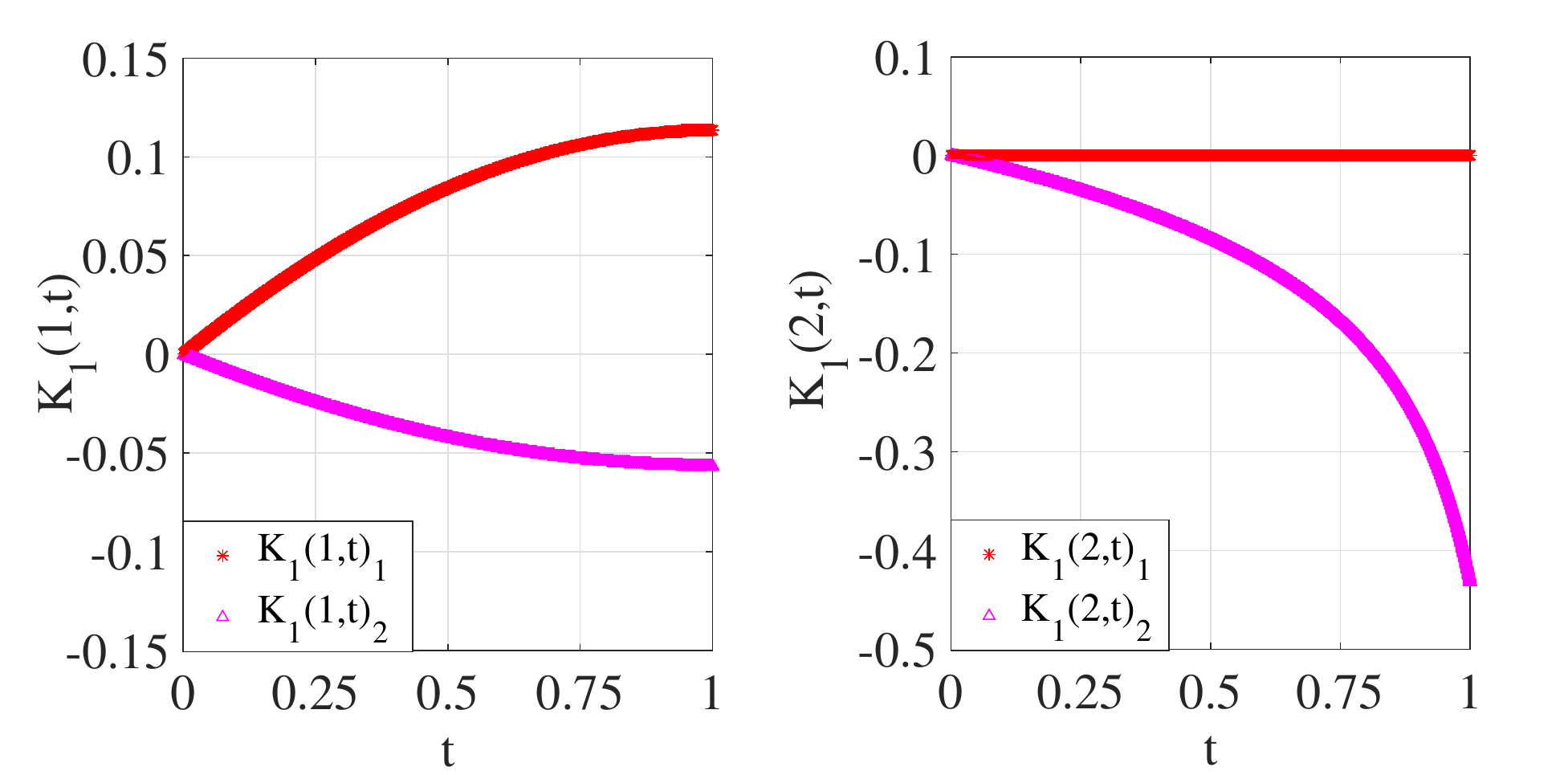}\\
  \caption {$K_1(1,t)$ $(K_1(2,t))$ is plotted on the left (right), and $K_1(i,t)_{h}$ is the $h$-th entry of $K_1(i,t)$, $h\in\{1,2\}$.}\label{picK1}
\end{figure}
\begin{figure}
  \centering
  \includegraphics[width=\columnwidth]{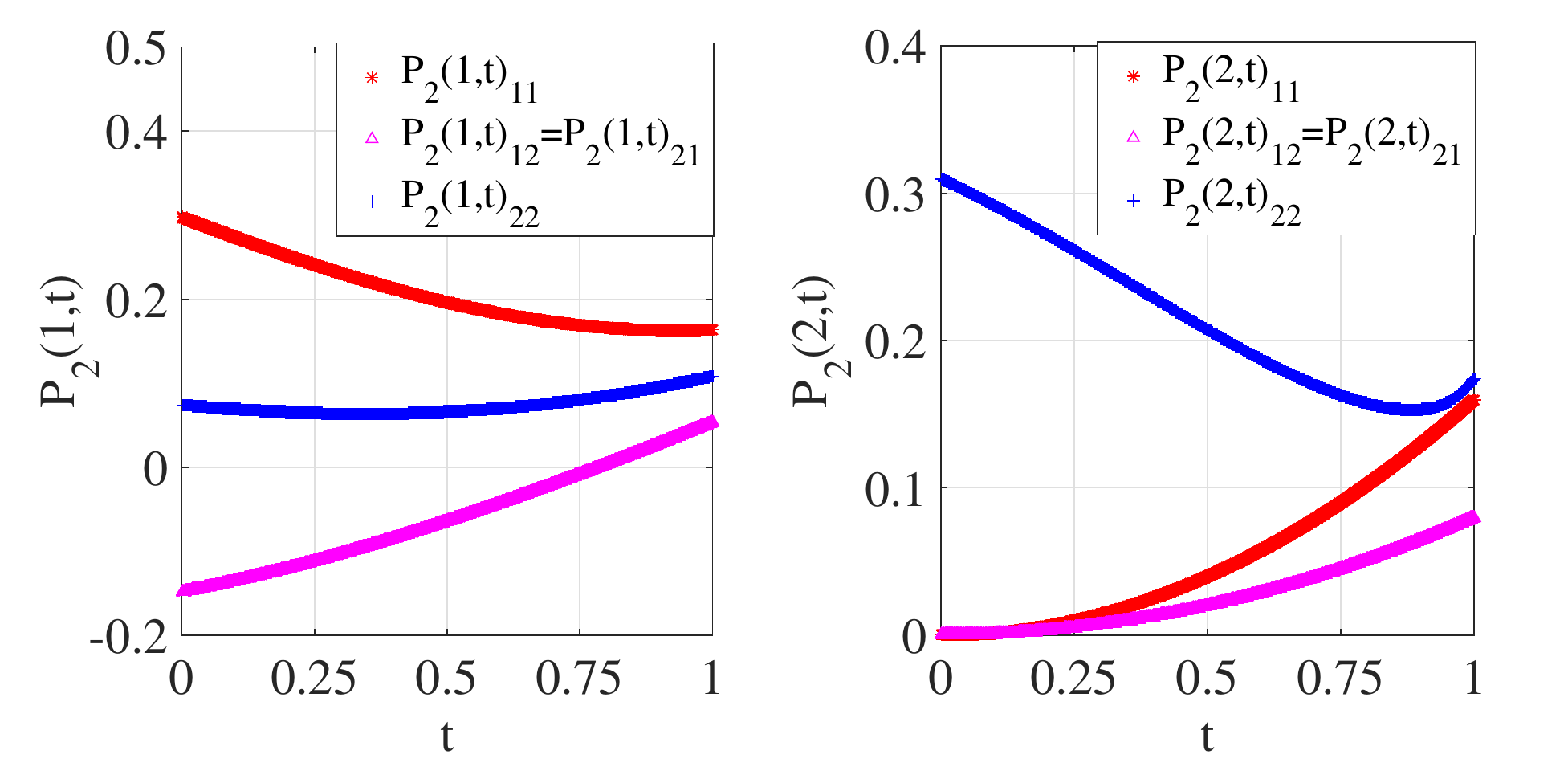}\\
  \caption{$P_2(1,t)$ $(P_2(2,t))$ is plotted on the left (right).}\label{picP2}
\end{figure}

\begin{figure}
  \centering
  \includegraphics[width=\columnwidth]{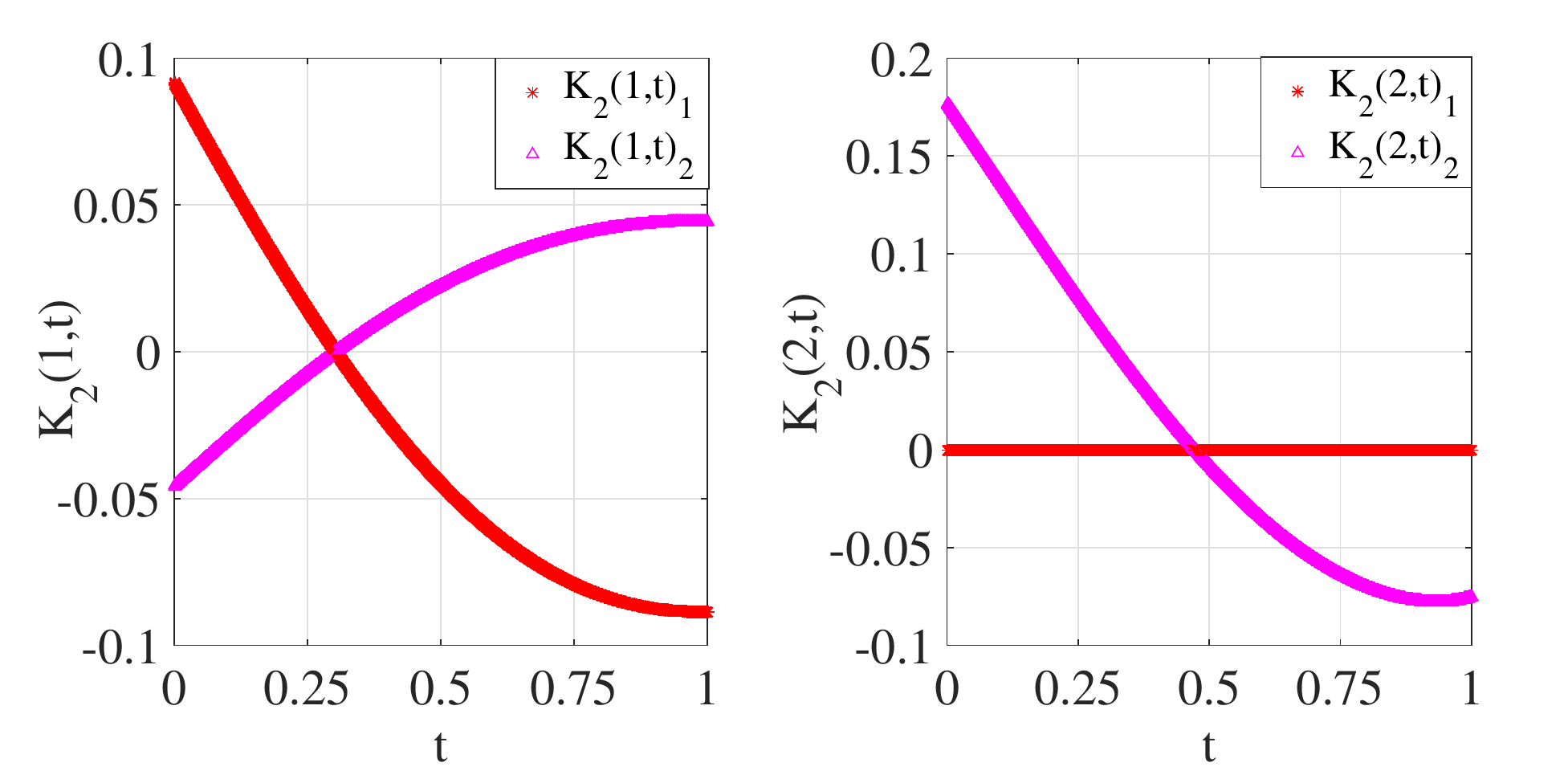}\\
  \caption{$K_2(1,t)$ $(K_2(2,t))$ is plotted on the left (right).}\label{picK2}
\end{figure}
\begin{figure}
  \centering
  \includegraphics[width=\columnwidth]{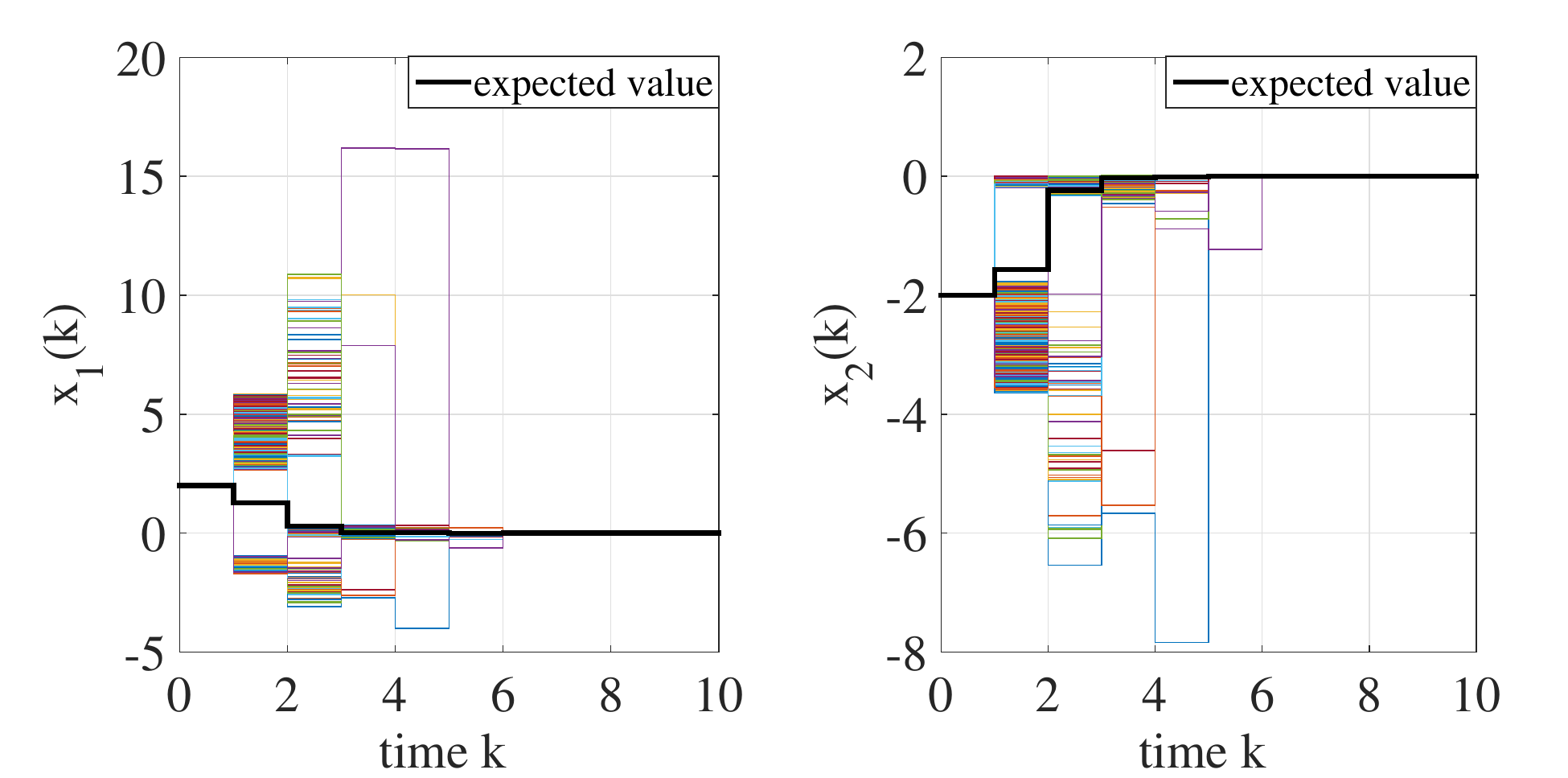}\\
  \caption{1000 possible trajectories of $x_{1}(k)$ $(x_{2}(k))$ for $(A+BK_{2}|\mathbb{G})$
  with the initial conditions $(x_{0},\vartheta_{0})$ are plotted on the left (right).}\label{A+BK2_x1x2}
\end{figure}

To solve the infinite horizon $H_{\infty}$ control problem with the attenuation level $\gamma$,
we consider the finite grid approximation of $\Theta$ adopted before and utilize a backward iterative algorithm like \cite{Hou2013JGO}.
This allows us to solve for the numerical solutions $\left(P_1^{\infty}, K_{1}^{\infty}, K_{2}^{\infty}\right)$
of coupled-AREs \eqref{areHinf}, \eqref{K1eq1477}, and \eqref{K22201}.
Moreover, it can be checked that $U(i,t)=U_i$ also satisfies \begin{equation*}
\begin{split}
U(i,t)-&[A+BK_{2}^{\infty}+FK_{1}^{\infty}]^{T}
\left[\sum_{j=1}^{2}\int^{1}_{0} p_{ij}U(j,s)ds\right]\\
&\cdot[A+BK_{2}^{\infty}+FK_{1}^{\infty}]- I\geq 0.
\end{split}
\end{equation*}
Therefore, $(A+BK_{2}^{\infty}+FK_{1}^{\infty}|\mathbb{G})$ is EMSS.
Fig. \ref{A+BK2inf+FK1inf_x1x2} displays some system state trajectories of $(A+BK_{2}^{\infty}+FK_{1}^{\infty}|\mathbb{G})$
under the initial conditions $(x_{0},\vartheta_{0})$.
\begin{figure}
  \centering
    \includegraphics[width=\columnwidth]{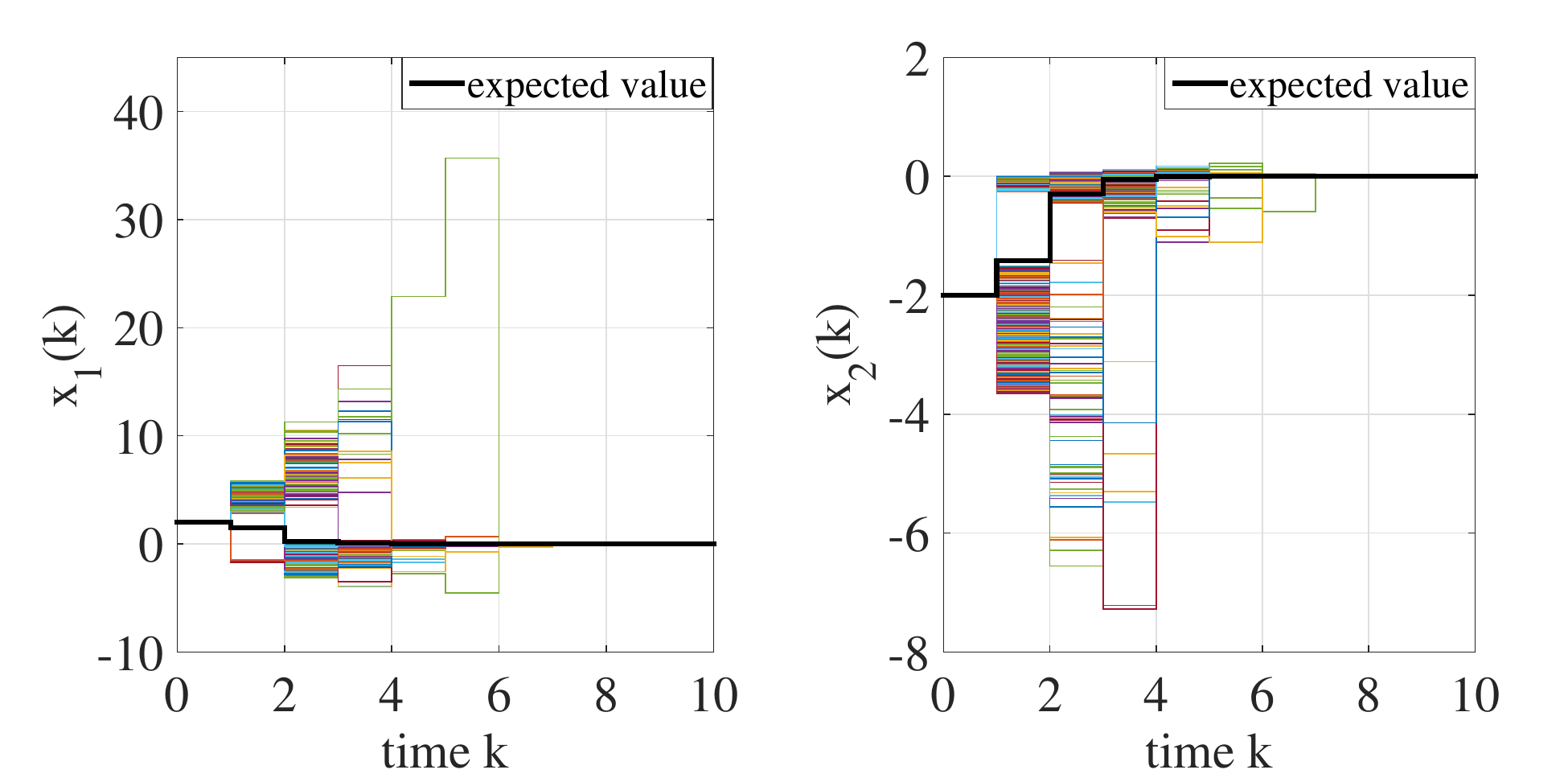}\\
  \caption{1000 possible trajectories of $x_{1}(k)$ $(x_{2}(k))$ for $(A+BK_{2}^{\infty}+FK_{1}^{\infty}|\mathbb{G})$
  with the initial conditions $(x_{0},\vartheta_{0})$ are plotted on the left (right).}\label{A+BK2inf+FK1inf_x1x2}
\end{figure}

Consider \eqref{system} with the disturbance $v(k)=e^{-2k}$, the control $u(k)=Kx(k)$, $k\in\mathbb{N}$,
and the initial conditions $(\bar{x}_{0},\vartheta_{0})$, where
$\bar{x}_{0}=\left[
\begin{array}{cc}
  0 & 0 \\
\end{array}
\right]^{T}$.
Evaluate the ratio $r_{K}(k)$ associated with \eqref{system}, which is defined as follows:
$$r_{K}(k)=\frac{[\sum_{d=0}^{k}\mathbf{E}\{|z(d)|^{2}\}]^{\frac{1}{2}}}{[\sum_{d=0}^{k}|v(d)|^{2}]^{\frac{1}{2}}},\ k\in\mathbb{N}.$$
The plots of $r_{K_2}(k)$ and $r_{K_2^{\infty}}(k)$ are shown in Fig. \ref{Outputv}.

Consider \eqref{system} with the initial conditions $(x_{0},\vartheta_{0})$.
Let $J_2(x_{0}, \vartheta_{0};u,v)(k)$ denote the performance function associated with \eqref{system}, given by
$$J_2(x_{0}, \vartheta_{0};u,v)(k)=\sum_{d=0}^{k} \mathbf{E}\{\|z(k)\|^2\}, \ k\in\mathbb{N}.$$
From Fig. \ref{J2campare}, it can be observed that when $k\rightarrow +\infty$,
$$J_{2}(x_{0}, \vartheta_{0};u^{*},v^{*})(k)\leq J_{2}(x_{0}, \vartheta_{0};u^{\infty},v^{*})(k),$$
where $u^{*}(k)=K_{2}x(k)$, $u^{\infty}(k)=K_{2}^{\infty}x(k)$, and $v^{*}(k)=K_{1}x(k),\  k\in\mathbb{N}.$
The comparison shows the superiority of $H_{2}/H_{\infty}$ control.
\begin{figure}
  \centering
    \includegraphics[width=\columnwidth]{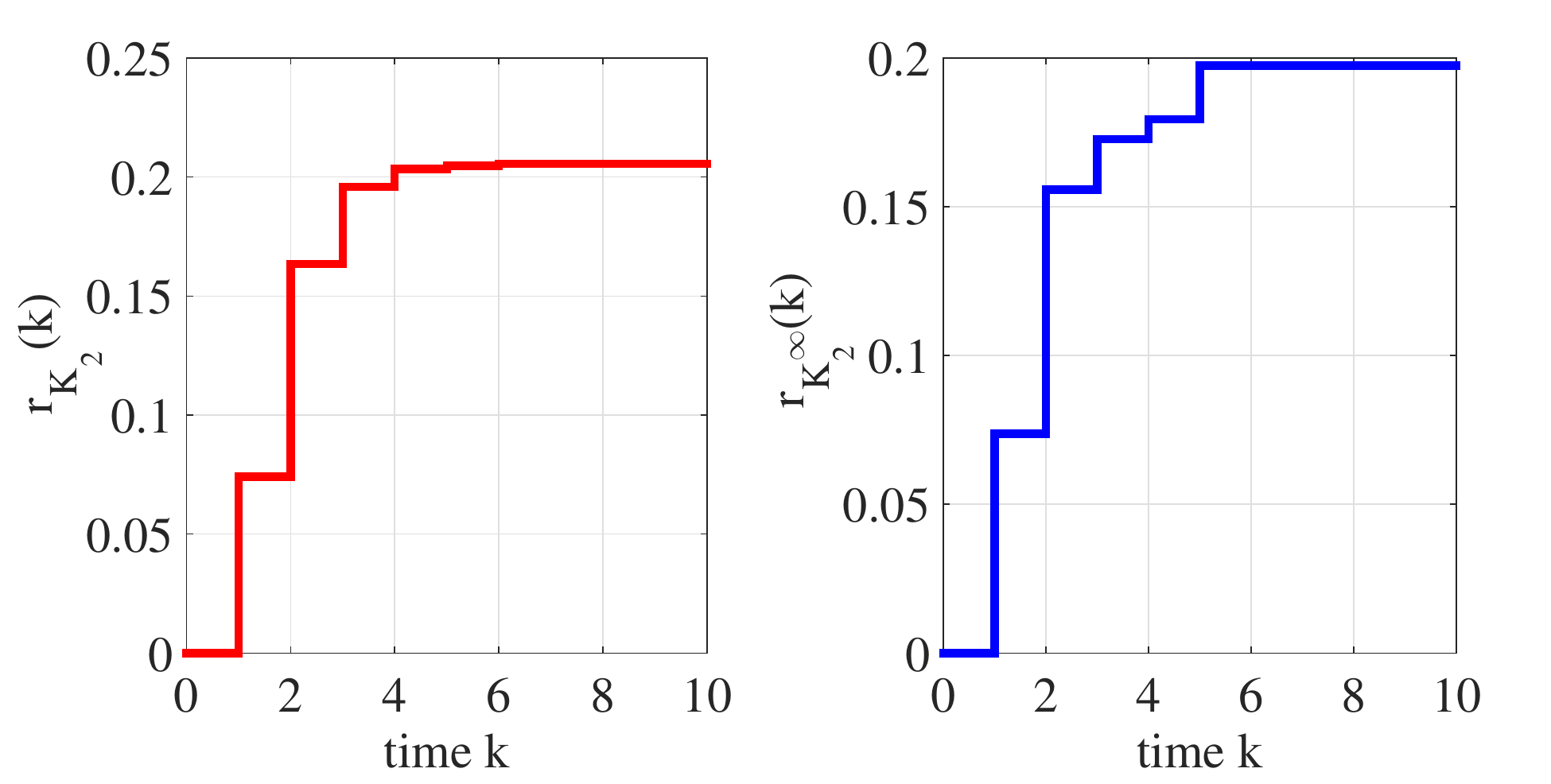}\\
  \caption{$r_{K_2}(k)$  ($r_{K_2^{\infty}}(k)$) is plotted on the left (right).}\label{Outputv}
\end{figure}
\begin{figure}
  \centering
    \includegraphics[width=\columnwidth]{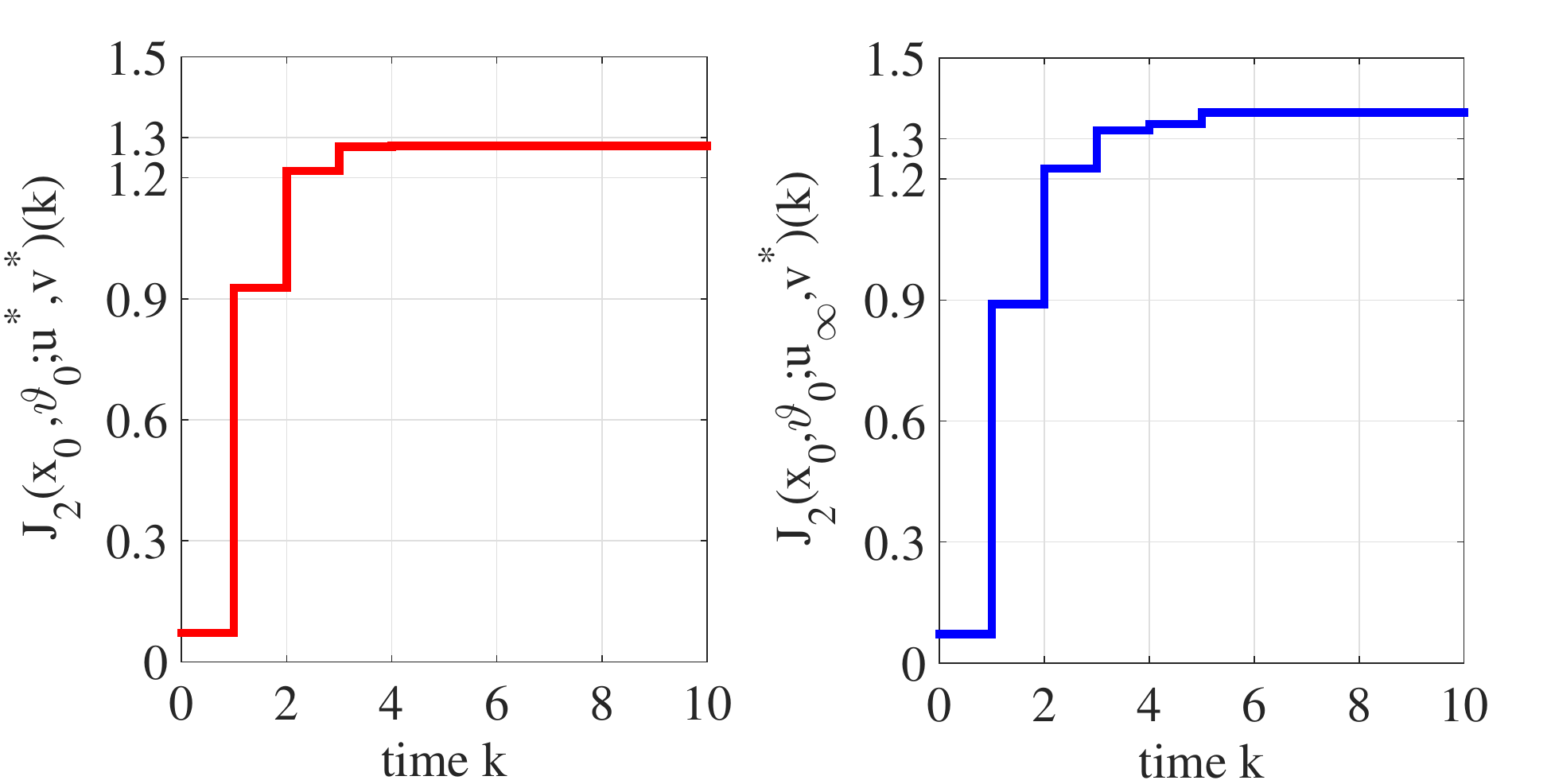}\\
  \caption{$J_{2}(x_{0}, \vartheta_{0};u^{*},v^{*})(k)$ ($J_{2}(x_{0}, \vartheta_{0};u_{\infty},v^{*})(k)$) is plotted on the left (right).}\label{J2campare}
\end{figure}
\end{example}

\section{Conclusion}\label{Conclusions}
This paper contains full investigations on detectability, coupled-AREs, and the game-based control
for MJLSs with the Markov chain on a Borel space.
It has been testified that under the detectability,
the autonomous system being EMSS depends on the existence of a positive semi-definite solution to a generalized Lyapunov equation.
This stability criterion has been used to determine the existence conditions of the maximal solution and the stabilizing solution to coupled-AREs.
Particularly, it has been shown that exponential mean-square stabilizability and detectability
can ensure the existence of the stabilizing solution to coupled-AREs related to the standard LQ optimization problem.
Further, by means of the stabilizing solution to four algebraic Riccati equations with the $H_{2}$-type and the $H_{\infty}$-type coupled,
the infinite horizon game-based control problem has been settled and Nash equilibrium strategies have been offered.
As an application, the mixed $H_{2}/H_{\infty}$ controller has been designed based on the BRL established in \cite{Xiao2023}.
These works have extended the previous results from the case where the Markov chain takes values in a countable set to the uncountable scenario.
In contrast to prior ones, it can be keenly aware that boundedness, measurability, and integrability
of the solutions to equations like the generalized Lyapunov equation and coupled-AREs,
bring significant challenges in analysis and the controller design of MJLSs with the Markov chain on a Borel space.
Finally, it should also be mentioned that under some mild technical assumptions,
such as that the Markov chain and the noise sequence are mutually independent,
the results established in this paper can be generalized to the system with multiplicative noise.
\section*{References}

\end{document}